\newtheorem{theorem}{Theorem}[section]
\newtheorem{corollary}[theorem]{Corollary}
\newtheorem{lemma}[theorem]{Lemma}
\newtheorem{proposition}[theorem]{Proposition}
\numberwithin{equation}{section}
\begin{document}
\title{Weighted Poincar\'e inequality and the Poisson Equation}
\author{Ovidiu Munteanu}
\email{ovidiu.munteanu@uconn.edu}
\address{Department of Mathematics, University of Connecticut, Storrs, CT
06268, USA}
\author{Chiung-Jue Anna Sung}
\email{cjsung@math.nthu.edu.tw}
\address{Department of Mathematics, National Tsing Hua University, Hsin-Chu,
Taiwan}
\author{Jiaping Wang}
\email{jiaping@math.umn.edu}
\address{School of Mathematics, University of Minnesota, Minneapolis, MN
55455, USA}

\begin{abstract}
We develop Green's function estimate for manifolds satisfying a weighted
Poincar\'e inequality together with a compatible lower bound on the Ricci
curvature. The estimate is then applied to establish existence and sharp
estimates of the solution to the Poisson equation on such manifolds. As an
application, Liouville property for finite energy holomorphic functions is
proven on a class of complete K\"ahler manifolds. Consequently, such
K\"ahler manifolds must be connected at infinity.
\end{abstract}

\thanks{The first author was partially supported by NSF grant DMS-1506220.
The second author was partially supported by MOST. The third author was
partially supported by NSF grant DMS-1606820.}
\maketitle

\section{Introduction}

Recently, in \cite{MSW}, we have studied the existence and estimates of the
solution $u$ to the Poisson equation

\begin{equation*}
\Delta u=-\varphi
\end{equation*}
on a complete Riemannian manifold $(M^n, g),$ where $\varphi $ is a given
smooth function on $M.$ Among other things, we have obtained the following
result.

\begin{theorem}
\label{P1}Let $\left( M^{n},g\right) $ be a complete Riemannian manifold
with bottom spectrum $\lambda _{1}(\Delta )>0$ and Ricci curvature 
$\mathrm{Ric}\geq -\left( n-1\right) K$ for some constant $K.$ Let $\varphi $ be a
smooth function such that

\begin{equation*}
\left\vert \varphi \right\vert \left( x\right) \leq c\,\left(1+r(x)\right)^{-k}
\end{equation*}
for some $k>1,$ where $r(x)$ is the distance function from $x$ to a fixed
point $p\in M.$ Then the Poisson equation $\Delta u=-\varphi $ admits a
bounded solution $u$ on $M.$

If, in addition, the volume of the unit ball $B(x,1)$ satisfies 
$\mathrm{V}\left( x,1\right) \geq v_{0}>0$ for all $x\in M,$ then the solution $u$
decays and

\begin{equation*}
\left\vert u\right\vert \left( x\right) \leq C\,\left( 1+r(x)\right) ^{-k+1}.
\end{equation*}
\end{theorem}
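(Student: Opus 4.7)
The natural approach is to construct $u$ via the Green's function representation
\begin{equation*}
u(x) := \int_M G(x,y)\,\varphi(y)\,dy,
\end{equation*}
where $G$ denotes the minimal positive Green's function of $\Delta$ on $M$. Since $\lambda_1(\Delta)>0$, the manifold is nonparabolic and such a $G$ exists. The entire argument then reduces to obtaining suitable decay estimates for $G$ and splitting this integral according to the geometry.

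The cornerstone is an integral decay estimate for $G$ forced by $\lambda_1(\Delta)>0$. The plan is to derive it by an Agmon-type weighted argument: multiply $G$ by an exponential weight $e^{a\,r(x,\cdot)}$ with $a<\sqrt{\lambda_1}$, test against the variational characterization of $\lambda_1$, and use the Laplacian comparison theorem (which uses $\mathrm{Ric}\ge -(n-1)K$) to control the Laplacian of the weight. This should yield
\begin{equation*}
\int_{B(x,R+1)\setminus B(x,R)} G(x,y)\,dy \;\le\; C\,e^{-2\sqrt{\lambda_1}\,R}
\end{equation*}
for $R\ge 1$. Summing these annular bounds together with the local integrability of $G$ near its pole gives the ball estimate $\int_{B(x,R)}G(x,y)\,dy\le C\,R$.

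For the first assertion (boundedness of $u$), split the integral at $r(x,y)=r(x)/2$. On the near region $r(y)\ge r(x)/2$, so $|\varphi(y)|\le c(1+r(x))^{-k}$; multiplying by the linear ball bound yields a contribution of order $r(x)(1+r(x))^{-k}$, which is bounded because $k>1$. On the far region, sum the annular bounds of $G$ against the polynomial decay of $\varphi$; the exponential factor in $R$ beats the at-most exponential volume growth coming from the Ricci bound and also outpaces any polynomial in $r(y)$, producing a uniformly bounded contribution.

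For the sharper decay under $V(x,1)\ge v_0$, the volume hypothesis yields uniform local Sobolev and mean value inequalities, and these upgrade the integral decay of $G$ to a pointwise exponential bound of the form $G(x,y)\le C\,e^{-\alpha\,r(x,y)}$ away from the pole, with constants depending on $\lambda_1$, $K$, and $v_0$. Running the same near/far decomposition then produces on the near side the bound $C\,r(x)(1+r(x))^{-k}\le C(1+r(x))^{-k+1}$, while on the far side $|u|$ is controlled by an expression exponentially small in $r(x)$ and hence absorbed. The main obstacle I expect is the Agmon-type weighted step: executing it cleanly in the presence of the pole of $G$ using only the Ricci lower bound (rather than an a priori Sobolev or weighted Poincar\'e inequality) requires careful cut-off arguments built from the distance function. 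Once the Green's function estimates are in place, the integral decomposition itself is largely bookkeeping.
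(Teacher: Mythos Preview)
Your proposal contains a genuine gap: the annular estimate
\[
\int_{B(x,R+1)\setminus B(x,R)} G(x,y)\,dy \;\le\; C\,e^{-2\sqrt{\lambda_1}\,R}
\]
is false in general. In fact the opposite holds: from $\int_{\partial B(x,t)}|\nabla G|\,dA\ge 1$ (divergence theorem applied to the harmonic function $G(x,\cdot)$ on $B(x,t)\setminus B(x,\varepsilon)$) and the Cheng--Yau gradient estimate $|\nabla \ln G|\le C$ on $M\setminus B(x,1)$, one gets $\int_{\partial B(x,t)} G\,dA\ge c>0$ for all $t\ge 1$, hence each annular $L^1$ integral of $G$ is bounded \emph{below} by a positive constant. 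The Agmon-type weighted argument you sketch does work, but it produces $L^2$ decay
\[
\int_{B(x,R+1)\setminus B(x,R)} G(x,y)^2\,dy \;\le\; C\,e^{-2\sqrt{\lambda_1}\,R},
\]
not $L^1$ decay; this is essentially the Li--Wang estimate. Note also the internal inconsistency: if your annular bounds really decayed exponentially, their sum would be $O(1)$, not $CR$.

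What the paper actually uses for the first assertion is only the \emph{linear} ball bound $\int_{B(p,r)}G(x,y)\,dy\le C(r+1)$ (Theorem~\ref{G1}), proved either by a level-set argument for $G$ or, in Section~\ref{Co}, via heat-kernel bounds (Li--Yau on-diagonal plus the spectral decay $e^{\lambda_1 t}H(x,x,t)$ nonincreasing). Boundedness of $u$ then follows from a dyadic decomposition in $r(p,y)$, not $r(x,y)$: on $B(p,2^{j+1})\setminus B(p,2^j)$ one has $|\varphi|\le c\,2^{-jk}$ and $\int G\le C\,2^{j}$, so the total is $\sum_j C\,2^{-j(k-1)}<\infty$. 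Your near/far split in $r(x,y)$ is not needed here and, as written, the far piece relies on the false exponential $L^1$ bound. For the second assertion your intuition is correct: the volume lower bound upgrades the $L^2$ decay of $G$ to a pointwise bound $G(x,y)\le C\,e^{-\sqrt{\lambda_1}\,r(x,y)}$ via a mean-value inequality, and this, combined with the exponential volume growth from Bishop--Gromov, handles the region near $p$.
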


Recall that the bottom spectrum $\lambda _{1}(\Delta )$ or the smallest
spectrum of the Laplacian can be characterized as the best constant of the
Poincar\'{e} inequality

\begin{equation*}
\lambda _{1}(\Delta )\,\int_{M}\phi ^{2}dx\leq \int_{M}|\nabla \phi |^{2}dx.
\end{equation*}
It is known that $\lambda _{1}\left( \Delta \right) >0$ implies that $M$ is
non-parabolic, that is, there exists a positive symmetric Green's function 
$G\left( x,y\right) $ for the Laplacian. The preceding theorem relies on the
following sharp estimate of the minimal positive Green's function.

\begin{theorem}
\label{G1}Let $\left( M^{n},g\right) $ be an $n$-dimensional complete
manifold with $\lambda _{1}\left( \Delta \right) >0$ and $\mathrm{Ric}\geq
-\left( n-1\right) K.$ Then for any $p,x\in M$ and $r>0$ we have

\begin{equation*}
\int_{B\left( p,r\right) }G\left( x,y\right) dy\leq C\,\left( 1+r\right)
\end{equation*}
for some constant $C$ depending only on $n,$ $K$ and $\lambda _{1}\left(
\Delta \right).$
\end{theorem}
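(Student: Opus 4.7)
Define $u(x) := \int_{B(p, r)} G(x, y) \, dy$. Since $\lambda_1(\Delta) > 0$ forces $M$ to be non-parabolic, $G$ is the minimal positive Green's function and $u$ will be a non-negative function tending to $0$ at infinity and satisfying $\Delta u = -\chi_{B(p, r)}$ in the distributional sense. The theorem is then equivalent to the pointwise estimate $u(x) \leq C(1+r)$ on $M$. Since $u$ is harmonic on $M \setminus \overline{B(p, r)}$ and vanishes at infinity, the maximum principle reduces the task to bounding $u$ on $\overline{B(p, r)}$.

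The first step is an integrated $L^2$ estimate from the Poincar\'e inequality. Testing $\Delta u = -\chi_{B(p, r)}$ against $u\eta^2$ for a cutoff $\eta$, integrating by parts, and applying $\lambda_1 \int (\eta u)^2 \leq \int |\nabla(\eta u)|^2$ leads, after letting $\eta \uparrow 1$, to
$$\lambda_1 \int_M u^2 \leq \int_{B(p, r)} u \leq V(p, r)^{1/2} \, \|u\|_{L^2(M)},$$
and hence $\|u\|_{L^2(M)} \leq \lambda_1^{-1} V(p, r)^{1/2}$. Localizing by taking cutoffs supported in $M \setminus B(p, R)$ for $R > r$, so that the source term $\int u \chi_{B(p,r)}\eta^2$ vanishes, the same argument produces a recursive inequality which iterates to an exponential $L^2$-decay $\int_{M \setminus B(p, R)} u^2 \leq C \, V(p, r) \, e^{-c(R - r)}$ with $c = c(\lambda_1) > 0$.

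The Ricci lower bound then enters to convert these $L^2$ bounds into pointwise ones. Because $|\Delta u| \leq 1$, a local Moser-type mean value inequality on unit balls controls $u(x)$ in terms of its local $L^2$ norm, with a constant depending only on $n$ and $K$. Combining this local regularity with the exponential $L^2$-decay of $u$ away from $B(p,r)$ and the Bishop--Gromov upper bound $V(p, r) \leq C e^{(n-1)\sqrt{K} r}$, a careful balancing of scales should deliver the linear bound $u(x) \leq C(1+r)$.

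The principal obstacle will be this final calibration. A crude application of the global $L^2$ estimate produces growth exponential in $r$, dominated by $V(p,r)^{1/2}$, so the sharp linear dependence must be recovered by pairing local $L^2$ estimates, obtained by refining the cutoff argument annulus by annulus, with the exponential decay away from $B(p,r)$. That the linear growth is truly sharp is witnessed by hyperbolic space $\mathbb{H}^n$, where a direct computation gives $\int_{B(p, r)} G(p, y) \, dy \sim r$; this leaves no slack in the estimate and forces the balancing to be tight.
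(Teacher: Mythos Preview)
Your reduction to $x\in \overline{B(p,r)}$ via the maximum principle is correct and matches the paper. The global $L^{2}$ estimate $\lambda_{1}\Vert u\Vert_{L^{2}}^{2}\le \int_{B(p,r)}u$ and the Agmon-type exponential $L^{2}$ decay of $u$ outside $B(p,r)$ are also fine (modulo the minor point that you must justify $u\in L^{2}(M)$ before letting $\eta\uparrow 1$; this is easiest done by working with Dirichlet Green's functions on an exhaustion and passing to the limit).

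The genuine gap is the final ``calibration.'' Your exponential $L^{2}$ decay concerns $u$ \emph{outside} $B(p,r)$, whereas the pointwise bound is needed \emph{inside} $B(p,r)$, where $\Delta u=-1$ and no annulus-by-annulus Caccioppoli decay is available. Moser's mean value inequality on a unit ball gives at best
\[
u(x)^{2}\;\le\; \frac{C}{\mathrm{V}(x,1)}\int_{B(x,1)}u^{2}\;\le\;\frac{C}{\mathrm{V}(x,1)}\,\Vert u\Vert_{L^{2}(M)}^{2}\;\le\;\frac{C}{\lambda_{1}^{2}}\,\frac{\mathrm{V}(p,r)}{\mathrm{V}(x,1)},
\]
and Bishop--Gromov only yields $\mathrm{V}(p,r)/\mathrm{V}(x,1)\le e^{Cr}$, producing exponential, not linear, growth. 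There is no evident mechanism in the purely elliptic $L^{2}$ framework to improve $\int_{B(x,1)}u^{2}$ beyond the global bound, so the step where you say ``a careful balancing of scales should deliver the linear bound'' is not a plan but a hope. On hyperbolic space the supremum of $u$ over $B(p,r)$ really is of order $r$ while $\Vert u\Vert_{L^{2}}^{2}\sim \mathrm{V}(p,r)$, so the ratio you would need to control is genuinely exponentially large; your inputs simply do not see the cancellation.

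The paper's proof supplies the missing ingredient by passing to the heat kernel. Writing $G(x,y)=\int_{0}^{\infty}H(x,y,t)\,dt$ and splitting at $\Lambda\asymp r/\lambda_{1}$, the large-time piece is handled by the spectral bound $H(x,y,t)\le e^{-\lambda_{1}(t-1)}H(x,x,1)^{1/2}H(y,y,1)^{1/2}$ together with Li--Yau and Bishop--Gromov, giving a contribution $\le C$. The small-time piece uses only
\[
\int_{M}H(x,y,t)\,dy\le 1\qquad\text{for all }t>0,
\]
so that $\int_{B(p,r)}\int_{0}^{\Lambda}H\,dt\,dy\le \Lambda\le C(1+r)$. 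It is precisely this mass bound for the heat semigroup, with no volume factor, that produces the linear growth; your $L^{2}$ approach has no analogue of it.
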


In the current paper, we continue to address similar issues for complete
manifolds satisfying more generally a so-called weighted Poincar\'{e}
inequality. Recall that Riemannian manifold $\left( M,g\right) $ satisfies a
weighted Poincar\'{e} inequality if there exists a function $\rho \left(
x\right) >0$ such that

\begin{equation}
\int_{M}\rho \phi ^{2}\leq \int_{M}|\nabla \phi |^{2}  \label{WP}
\end{equation}
for any compactly supported function $\phi \in C_{0}^{\infty }(M).$

Other than being a natural generalization of $\lambda _{1}\left( \Delta
\right) >0,$ there are various motivations for considering weighted 
Poincar\'{e} inequality. First, it is well-known (see \cite{LW1}) that $M$ being
nonparabolic is equivalent to the validity of the weighted Poincar\'{e}
inequality for some $\rho.$ Secondly, according to a result of Cheng 
\cite{C}, when the Ricci curvature of manifold $M$ is asymptotically nonnegative
at infinity, its bottom spectrum $\lambda _{1}\left( \Delta \right) =0,$ and
one is forced to work with weighted Poincar\'{e} inequalities. Thirdly, by
considering weighted Poincar\'{e} inequality, it enables one to consider
manifolds with Ricci curvature bounded below by a function. Typically, in
geometric analysis, one assumes the curvature to be bounded by a constant so
that various comparison theorems become available. As demonstrated in 
\cite{LW1, LW4}, weighted Poincar\'{e} inequality allows one to go beyond this realm.
Indeed, they were able to prove some structure theorems for manifolds with
its Ricci curvature satisfying the inequality 
\begin{equation*}
\text{\textrm{Ric}}(x)\geq -C\,\rho (x)
\end{equation*}
for a suitable constant $C$ for all $x\in M.$ Finally, weighted Poincar\'{e}
inequality occurs naturally under various geometric settings. Indeed, a
result of Minerbe \cite{Mi} (see \cite{H} for further development) implies that complete manifold $M$ with
nonnegative Ricci curvature satisfies weighted Poincar\'{e} inequality 
with $\rho (x)=c\,r^{-2}(x),$ where $r(x)$ is the distance from $x$ to a fixed
point $p$ in $M,$ provided that the following reverse volume comparison
holds for some constant $C$ and $\nu >2$

\begin{equation*}
\frac{\mathrm{V}(B(p,t))}{\mathrm{V}(B(p,s))}\geq C\,\left( \frac{t}{s}\right) ^{\nu }
\end{equation*}
for all $0<s<t<\infty.$ Also, for minimal submanifold $M^{n}$ of the
Euclidean space $\mathbb{R}^{N},$ weighted Poincar\'{e} inequality is valid
on $M$ with $\rho (x)=\frac{(n-2)^{2}}{4}\bar{r}^{-2}(x),$ where $\bar{r}(x)$
denotes the extrinsic distance function from $x$ to a fixed point (see \cite{Ca, LW1}). 
On the other hand, for a stable minimal hypersurface in a manifold with nonnegative
Ricci curvature, by the second variation formula, weighted Poincar\'{e}
inequality holds for $\rho (x)$ being the length square of the second
fundamental form.

We also remark that the weighted Poincar\'e inequalities in various forms
have appeared in many important issues of analysis and mathematical physics.
Agmon \cite{A} has used it in his study of eigenfunctions for the
Schr\"odinger operators. In the interesting papers \cite{F-P1} and \cite{F-P2}, 
Fefferman and Phong have considered the more general weighted
Sobolev type inequalities for pseudodifferential operators. There are many interesting
results concerning sharp form of the weight $\rho.$ The classical Hardy inequality
for the Euclidean space $\mathbb{R}^n$ implies that  $\rho (x)=\frac{(n-2)^{2}}{4}\,r^{-2}(x)$
and it is optimal. In \cite{BGG}, it is shown that a sharp $\rho$ on the hyperbolic space $\mathbb{H}^n$ is given by 
$\rho (x)=\frac{(n-1)^2}{4}+\frac{(n-2)^{2}}{4}\,r^{-2}(x).$ We also refer to \cite{DFP} for
a more systematic approach to finding an optimal $\rho$ for more general second order
elliptic operators.

Throughout the paper, we will assume the weight $\rho(x)$ in addition
satisfies both (\ref{P}) and (\ref{O}), that is, the $\rho $-metric defined
by

\begin{equation}
ds_{\rho }^{2}=\rho \,ds^{2}  \label{P}
\end{equation}
is complete; and for some constants $A>0$ and $\delta >0,$

\begin{equation}
\sup_{B\left( x,\frac{\delta }{\sqrt{\rho \left( x\right) }}\right) }\rho
\leq A\inf_{B\left( x,\frac{\delta }{\sqrt{\rho \left( x\right) }}\right)
}\rho  \label{O}
\end{equation}
for all $x\in M.$ 

We point out that these two conditions
obviously hold true for a weight of the form $\rho(x)=c\,r^{\alpha}(x)$ with $ \alpha\geq -2.$  
The metric $ds_{\rho }^{2}$ was first used by Agmon \cite{A} to study
decay estimates for eigenfunctions. It was later employed to establish $L^2$ decay estimates
for the Green's function in \cite{LW1}. 

Our first result is an integral estimate for the minimal positive Green's
function $G\left( x,y\right) $ on $M.$ In the following, we denote geodesic balls with respect to
the background metric $ds^{2}$ by $B\left(x,r\right),$ and to the metric 
$ds^2_{\rho}$ by $B_{\rho}\left( x,r\right).$ 

\begin{theorem}
\label{G2}Let $\left( M^{n},g\right) $ be a complete manifold satisfying the
weighted Poincar\'{e} inequality (\ref{WP}) with weight $\rho$ having
properties (\ref{P}) and (\ref{O}). Assume that $\mathrm{Ric}\geq -K\,\rho$
on $M$ for some $K\geq 0.$ Then

\begin{equation*}
\int_{B_{\rho }\left( p,r\right) }\rho \left( y\right) G\left( x,y\right)
dy\leq C\left( r+1\right)
\end{equation*}
for all $p$ and $x$ in $M,$ and all $r>0,$ where $C$ depends only on $n,$ 
$K,$ $\delta$ and $A.$
\end{theorem}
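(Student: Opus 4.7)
The plan is to model the argument on the proof of Theorem~\ref{G1}, with the $\rho$-metric playing the role of the background metric. The key observation is that hypotheses (\ref{P}) and (\ref{O}), combined with $\mathrm{Ric}\geq -K\rho$, guarantee that each ball $B(y,\delta/\sqrt{\rho(y)})$---of $\rho$-radius comparable to $\delta$ by (\ref{O})---looks, after rescaling by $\sqrt{\rho(y)}$, like a unit ball with $\mathrm{Ric}\geq -KA$. Consequently the classical local tools (mean value inequality for positive harmonic functions, Bishop--Gromov volume comparison, local Green's function bounds) are available on such balls with constants depending only on $n,K,\delta,A$.

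The first step will be to derive a weighted $L^{2}$ decay estimate for $G$. Testing (\ref{WP}) against $\phi=\chi\,G(x,\cdot)$ for a cutoff $\chi$ vanishing in a neighborhood of the pole $x$, and invoking the Caccioppoli identity $\int \chi^{2}|\nabla G|^{2}=-2\int \chi G\,\nabla\chi\cdot\nabla G$ (which follows from $\Delta_{y}G(x,y)=0$ on the support of $\chi$), the cross term in the expansion of $|\nabla(\chi G)|^{2}$ cancels $\int \chi^{2}|\nabla G|^{2}$, leaving
\begin{equation*}
\int_M \rho\,\chi^{2}G^{2} \leq \int_M G^{2}|\nabla\chi|^{2}.
\end{equation*}
Specializing $\chi$ to a $\rho$-Lipschitz cutoff of the $\rho$-distance from $p$, so that $|\nabla\chi|^{2}\leq C\rho$ on its support, turns this into a geometric recursion for the tail $a_R:=\int_{M\setminus B_\rho(p,R)}\rho\,G^{2}$, producing exponential decay $a_R\leq Ce^{-cR}$ once $R$ exceeds the $\rho$-distance from $p$ to $x$ by a definite amount.

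To convert this $L^{2}$ control into the desired $L^{1}$ bound, the plan is to apply Cauchy--Schwarz on each $\rho$-annulus $A_k=B_\rho(p,k+1)\setminus B_\rho(p,k)$,
\begin{equation*}
\int_{A_k}\rho\,G \leq \left(\int_{A_k}\rho\right)^{1/2}\left(\int_{A_k}\rho\,G^{2}\right)^{1/2}.
\end{equation*}
The second factor is controlled by the first step; the first factor is bounded via $\rho$-metric volume comparison coming from the rescaled Ricci bound on each unit $\rho$-ball. Summing $k=0,\ldots,\lfloor r\rfloor$ then contributes a bound of order $r+1$. The near-pole region $B(x,\delta/\sqrt{\rho(x)})$ is treated separately: the pointwise estimate $G(x,y)\leq C\,d(x,y)^{2-n}$, standard on balls with (rescaled) bounded Ricci, yields $\int \rho\,G=O(1)$ on this piece.

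The hard part will be twofold. First, making the iteration of step one rigorous in the presence of the pole of $G$: the recursion must be started from a $\rho$-ball already enclosing $x$, with a finite baseline value obtained either from the minimality of $G$ (so that the tail $a_R\to 0$) or from an explicit local computation near the pole. Second, one must verify that the growth rate of $\int_{B_\rho(p,R)}\rho$ is strictly smaller than the exponential decay rate of $a_R$, so that the Cauchy--Schwarz sum over annuli indeed telescopes to a bound linear in $r$ and not to something of order $r^{1+\varepsilon}$; this balance is where the interplay between conditions (\ref{WP}), (\ref{P}), (\ref{O}) and the Ricci bound enters in an essential way.
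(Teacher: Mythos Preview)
Your proposal has a genuine gap at precisely the point you yourself flag as ``the hard part.'' The $L^{2}$ decay you obtain from the weighted Poincar\'e inequality is at best $\int_{M\setminus B_\rho(p,R)}\rho\,G^{2}\le Ce^{-2R}$---the rate $2$ is forced by the normalization of (\ref{WP}) and cannot be improved. On the other hand, the only volume bound available a priori comes from iterating Bishop--Gromov on the rescaled unit $\rho$-balls, which gives $\mathcal{V}_\rho(p,R)\le e^{c_2 R}$ with $c_2$ depending on $n,K,A,\delta$ and in general strictly larger than $2$. (Already in the constant-weight case $\rho=\lambda_1(\Delta)$ with $\mathrm{Ric}\ge -(n-1)K$, Bishop--Gromov gives volume growth rate $(n-1)\sqrt{K}/\sqrt{\lambda_1}$, which exceeds $2$ whenever $\lambda_1<\tfrac{(n-1)^2}{4}K$---the generic situation by Cheng's theorem.) Your Cauchy--Schwarz step then yields $\int_{A_k}\rho\,G\le Ce^{(c_2-2)k/2}$, which diverges upon summation. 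The sharper volume comparison $\mathcal{V}_\rho(p,R)\le e^{CR}\mathcal{V}_\rho(p,1)$ stated in the introduction (Theorem~\ref{V_Intro}) is not available to you here: in the paper it is proved \emph{as a consequence} of Theorem~\ref{G2}, so invoking it would be circular.

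The paper avoids this balance problem entirely by working with the level sets $L_x(\alpha,\beta)=\{\alpha<G(x,\cdot)<\beta\}$ rather than $\rho$-annuli. Testing (\ref{WP}) with $\phi=\chi\,G^{1/2}$, where $\chi$ is a cutoff in $\ln G$, and using the co-area identity $\int_{L_x(\alpha,\beta)}G^{-1}|\nabla G|^{2}=\ln(\beta/\alpha)$ yields directly $\int_{L_x(\alpha,\beta)}\rho\,G\le c\bigl(1+\ln(\beta/\alpha)\bigr)$; no volume factor ever appears. The linear-in-$r$ bound then comes from a Harnack inequality (proved via your rescaling observation and Cheng--Yau), which gives $\ln(\beta/\alpha)\le Cr$ for the oscillation of $G$ on $B_\rho(p,r)\setminus B_\rho(x,1)$. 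The near-pole piece $B_\rho(x,1)$ is handled by a separate dyadic level-set decomposition using the local Poincar\'e inequality on small $\rho$-balls (Li--Schoen), again without volume comparison. Your rescaling heuristic is correct and is indeed how the paper transfers the classical local estimates to $\rho$-balls; what is missing from your plan is the level-set/co-area mechanism that replaces the lossy Cauchy--Schwarz step.
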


As an application of Theorem \ref{G2}, we obtain the following solvability
result for the Poisson equation.

\begin{theorem}
\label{P2}Let $\left( M^{n},g\right) $ be a complete manifold satisfying the
weighted Poincar\'{e} inequality (\ref{WP}) with weight $\rho$ having
properties (\ref{P}) and (\ref{O}). Assume that $\mathrm{Ric}\geq -K\rho $
on $M$ for some $K\geq 0.$ Then for smooth function $\varphi$ such that

\begin{equation*}
\left\vert \varphi \right\vert \left( x\right) \leq c\,\left(
1+r_{\rho}(x)\right) ^{-k}
\end{equation*}
for some $k>1,$ where $r_{\rho }(x)$ is the $\rho $-distance function from 
$x$ to a fixed point $p\in M,$ the Poisson equation 
$\Delta u=-\rho\,\varphi$ admits a bounded solution $u$ on $M.$

If, in addition, there exists $v_{0}>0$ such that 
\begin{equation*}
\mathcal{V}_{\rho }\left( x,1\right) =\int_{B_{\rho }\left( x,1\right) }\rho
\left( y\right) dy\geq v_{0}
\end{equation*}
for all $x\in M,$ then the solution $u$ decays and

\begin{equation*}
\left\vert u\right\vert \left( x\right) \leq C\,\left( 1+r_{\rho}(x)\right)^{-k+1}.
\end{equation*}
\end{theorem}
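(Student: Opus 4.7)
The plan is to construct $u$ as the Green's potential $u(x)=\int_M G(x,y)\rho(y)\varphi(y)\,dy$ and extract both boundedness and decay directly from Theorem~\ref{G2}, following the strategy of Theorem~\ref{P1} in \cite{MSW}. To establish that this integral converges and is bounded uniformly in $x$, I would decompose $M$ into dyadic $\rho$-annuli about the fixed point $p$: set $A_i = B_{\rho}(p,2^i)\setminus B_{\rho}(p,2^{i-1})$ for $i\ge 1$ and $A_0 = B_{\rho}(p,1)$. On $A_i$ with $i\ge 1$ one has $|\varphi(y)|\le c\,2^{-(i-1)k}$, while Theorem~\ref{G2} yields $\int_{A_i}G(x,y)\rho(y)\,dy\le C(2^i+1)$. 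Summing,
\[
|u(x)|\le C\sum_{i\ge 0}2^{i(1-k)},
\]
which converges since $k>1$. To verify $\Delta u=-\rho\varphi$, I would approximate $G$ from below by the monotone Dirichlet Green's functions $G_R$ on an exhaustion $\Omega_R$, set $u_R(x)=\int_{\Omega_R}G_R(x,y)\rho(y)\varphi(y)\,dy$, and pass to the limit using dominated convergence together with standard interior elliptic regularity on compact subsets.

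For the decay under the volume hypothesis, I would let $R=r_{\rho}(x)$ be large and split
\[
u(x)=\int_{B_{\rho}(x,R/2)}G(x,y)\rho(y)\varphi(y)\,dy+\int_{M\setminus B_{\rho}(x,R/2)}G(x,y)\rho(y)\varphi(y)\,dy = I_1+I_2.
\]
The inner integral $I_1$ should be immediate to handle: on $B_{\rho}(x,R/2)$ the triangle inequality forces $r_{\rho}(y)\ge R/2$, so $|\varphi(y)|\le C(1+R)^{-k}$, and applying Theorem~\ref{G2} with $p$ replaced by $x$ gives $\int_{B_{\rho}(x,R/2)}G\rho\le C(R+1)$. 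This will yield $|I_1|\le C(1+R)^{1-k}$, which is the desired bound.

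The main obstacle is $I_2$: on the subregion where $r_{\rho}(y,x)\ge R/2$ but $r_{\rho}(y)$ is small (i.e., $y$ still near $p$), the decay of $\varphi$ is unavailable, and a crude application of Theorem~\ref{G2} would only allow $|I_2|$ to be as large as $CR$. To overcome this, I would upgrade the integral bound of Theorem~\ref{G2} to a pointwise decay of $G(x,y)$ as $r_{\rho}(x,y)\to\infty$. Since $G(x,\cdot)$ is nonnegative and harmonic away from $x$, the volume lower bound $\mathcal{V}_{\rho}(x,1)\ge v_0$, combined with $\mathrm{Ric}\ge -K\rho$ and the oscillation control (\ref{O}), should make available a De~Giorgi--Moser mean value inequality on $\rho$-balls, giving
\[
G(x,y)\le \frac{C}{\mathcal{V}_{\rho}(y,1)}\int_{B_{\rho}(y,1)}G(x,z)\rho(z)\,dz.
\]
Theorem~\ref{G2}, possibly together with a Li--Wang type energy-decay argument for $G$ in the $\rho$-metric, will then provide quantitative decay of $G(x,y)$ in $r_{\rho}(x,y)$. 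Inserting this pointwise bound and performing a further dyadic decomposition of $M\setminus B_{\rho}(x,R/2)$ based simultaneously on $r_{\rho}(y,p)$ and $r_{\rho}(y,x)$ should yield $|I_2|\le C(1+R)^{1-k}$, completing the proof.
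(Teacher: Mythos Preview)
Your proposal is correct and closely parallels the paper's argument. The boundedness part is identical to the paper's Theorem~\ref{P3}: the dyadic $\rho$-annuli about $p$ combined with Theorem~\ref{G2} give the convergent geometric series. For the decay part the paper does exactly what you anticipate, first proving a pointwise estimate $G(x,z)\le C e^{-r_\rho(x,z)}$ (Theorem~\ref{DG}) via the Li--Wang $L^2$ decay \eqref{LW} together with the Harnack inequality of Lemma~\ref{H1}, and then combining this with the volume growth bound $\mathcal{V}_\rho(p,t)\le e^{c_1 t}\mathcal{V}_\rho(p,1)$ of Theorem~\ref{V}.

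The one substantive difference is in the decomposition for the decay estimate: you split at $B_\rho(x,R/2)$, whereas the paper splits at $B_\rho(p,\alpha R)$ with $\alpha=\frac{1}{2(c_1+1)}$ chosen so that the exponential volume growth $e^{c_1\alpha R}$ is beaten by the exponential Green's decay $e^{-(1-\alpha)R}$. Your $I_1$ is in fact cleaner than the paper's outer estimate, but your $I_2$ will eventually run into the same balancing issue: once you invoke $G(x,y)\le Ce^{-r_\rho(x,y)}$ on the region near $p$, you still have to integrate against $\rho$ over a $\rho$-ball of radius $\sim R$, whose weighted volume may grow like $e^{c_1 R}$ with $c_1$ uncontrolled. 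So the ``further dyadic decomposition based simultaneously on $r_\rho(y,p)$ and $r_\rho(y,x)$'' must ultimately introduce a small parameter $\alpha$ depending on $c_1$, exactly as the paper does. With that caveat your outline goes through.
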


Obviously, these results are faithful generalization of the ones from 
$\lambda _{1}\left( \Delta \right) >0.$ We also point out that Theorem \ref{G2}
is sharp as remarked after the proof of Theorem \ref{G3}. In passing, we mention that
recently Catino, Monticelli and Punzo \cite{CMP} have studied the solvability of the Poisson
equation by only assuming the essential spectrum of $M$ is positive. In view of this,
one may speculate that some of the preceding results generalize with 
weighted Poincar\'e inequality holds only for smooth functions $\phi$ with support
avoiding a fixed geodesic ball. 

Our proof of Theorem \ref{G2} follows in part of that of Theorem \ref{G1}.
As in the proof of Theorem \ref{G1}, we write

\begin{eqnarray}
\int_{B_{\rho }\left( p,r\right) }\rho (y)\,G\left( x,y\right) dy
&=&\int_{B_{\rho }\left( p,r\right) \backslash B_{\rho }\left( x,1\right)
}\rho (y)\,G\left( x,y\right) dy  \label{r1} \\
&&+\int_{B_{\rho }\left( p,r\right) \cap B_{\rho }\left( x,1\right) }\rho
(y)\,G\left( x,y\right) dy.  \notag
\end{eqnarray}

Following \cite{MSW}, the integral over $B\left( p,r\right) \backslash
B\left( x,1\right)$ is estimated by the integral

\begin{equation*}
\int_{L_{x}\left( \alpha ,\beta \right) }\rho(y)\,G\left( x,y\right) dy
\end{equation*}
over the sublevel sets

\begin{equation*}
L_{x}\left( \alpha ,\beta \right) :=\left\{ y\in M:\alpha <G\left(
x,y\right) <\beta \right\} ,
\end{equation*}%
where $\alpha $ and $\beta $ are the minimum and maximum value of the
Green's function $G\left( x,y\right) $ over $B_{\rho }\left( p,r\right)
\backslash B_{\rho }\left( x,1\right) ,$ respectively. Using the weighted
Poincar\'{e} inequality instead of $\lambda _{1}\left( \Delta \right) >0$
and arguing as in \cite{MSW}, one obtains

\begin{equation*}
\int_{L_{x}\left( \alpha ,\beta \right) }\rho(y)\,G\left( x,y\right) dy \leq
c\,\int_{L_{x}\left( \frac{1}{2}\alpha ,2\beta \right) }G^{-1}\left(
x,y\right) \left\vert \nabla G\right\vert ^{2}\left(x,y\right) dy.
\end{equation*}
Now the co-area formula together with the fact that $G\left( x,y\right) $ is
harmonic on $M\backslash B_{\rho}\left( x,1\right)$ yields that

\begin{equation}
\int_{B_{\rho }\left( p,r\right) \backslash B_{\rho }\left( x,1\right) }\rho
\left( y\right) G\left( x,y\right) dy\leq C\left( r+1\right).  \label{r3}
\end{equation}

For the integral over $B_{\rho }\left( p,r\right) \cap B_{\rho }\left(
x,1\right) $ in (\ref{r1}), however, a different approach from \cite{MSW} is
needed. In the case of of $\lambda _{1}\left( \Delta \right) >0,$ the proof
relies on the following double integral estimate for the minimal positive
Green's function.

\begin{equation*}
\int_{A}\int_{B}G(x,y)\,dy\,dx\leq \frac{e^{\sqrt{\lambda _{1}\left( \Delta
\right) }}}{\lambda _{1}\left( \Delta \right) }\,\sqrt{\mathrm{V}\left(
A\right) }\sqrt{\mathrm{V}\left( B\right) }\left( 1+\,r(A,B)\right) e^{-%
\sqrt{\lambda _{1}\left( \Delta \right) }\,r(A,B)}
\end{equation*}
for any bounded domains $A$ and $B$ of $M$, where $r\left( A,B\right) $
denotes the distance between $A$ and $B$, and $\mathrm{V}\left( A\right), 
\mathrm{V}\left( B\right) $ their volumes.

Unfortunately, it is unclear to us at this point how to formulate and derive
a similar estimate under the weighted Poincar\'{e} inequality. To overcome
this difficulty, we decompose $B_{\rho }\left( p,r\right) $ into a sequence
of annuli and employ a similar argument as (\ref{r3}) for each annulus.
However, instead of the weighted Poincar\'{e} inequality, we now use Poincar%
\'{e} inequality by appealing to a result of Li and Schoen \cite{LS} on the
estimate of the bottom spectrum of a geodesic ball in terms of the Ricci
curvature lower bound and its radius. This argument has the added benefit
that it completely avoids the involvement of the heat kernel and treats the
two integrals of (\ref{r1}) away and near the singularity of the Green's
function in a unified manner.

The Green's function estimate in Theorem \ref{G2} leads to the following
volume comparison estimate for geodesic $\rho $-balls. Define 
\begin{equation}
\mathcal{V}_{\rho }\left( x,r\right) =\int_{B_{\rho }\left( x,r\right) }\rho
\left( y\right) dy.  \label{Mu_Intro}
\end{equation}

\begin{theorem}
\label{V_Intro}Let $\left( M^{n},g\right) $ be a complete manifold
satisfying the weighted Poincar\'{e} inequality (\ref{WP}) with weight $\rho 
$ having properties (\ref{P}) and (\ref{O}). Assume that $\mathrm{Ric}\geq
-K\rho $ on $M$ for some $K\geq 0.$ Then there exist constants $c_1$ and $c_2$ 
depending only on $n,$ $K,$ $\delta$ and $A$ such
that for all $x\in M,$

\begin{equation*}
c_1\,e^{2R}\mathcal{V}_{\rho}\left( x,1\right)  
\leq \mathcal{V}_{\rho }\left( x,R\right) \leq e^{c_2\, R} \,\mathcal{V}_{\rho }\left( x,1\right)
\end{equation*}
for all $1<R<\infty.$
\end{theorem}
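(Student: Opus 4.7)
The plan is to establish the two inequalities separately.

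For the upper bound $\mathcal{V}_\rho(x, R) \leq e^{c_2 R} \mathcal{V}_\rho(x, 1)$, I would first prove a uniform scale-one doubling
\begin{equation*}
\mathcal{V}_\rho(x, r+1) \leq D \, \mathcal{V}_\rho(x, r),
\end{equation*}
with $D = D(n, K, \delta, A)$, and then iterate. The doubling follows by covering the annulus $B_\rho(x, r+1) \setminus B_\rho(x, r)$ by a controlled number of $\rho$-balls of radius $\delta/2$. Each such $\rho$-ball corresponds, in the background metric, to a geodesic ball of radius of order $\delta/\sqrt{\rho(\cdot)}$, on which property \eqref{O} makes $\rho$ comparable to its value at the center; hence $\mathrm{Ric}\geq -K\rho$ becomes a uniform constant lower bound at this scale, and the classical Bishop--Gromov comparison applies.

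For the lower bound $\mathcal{V}_\rho(x, R) \geq c_1 e^{2R} \mathcal{V}_\rho(x, 1)$, I would invoke the weighted Poincar\'e inequality \eqref{WP}. Test against a radial function $\phi = \psi(r_\rho)$ with $\psi \in C_c^\infty([0,\infty))$; the identity $|\nabla r_\rho|^2 = \rho$ almost everywhere, combined with the coarea formula, reduces \eqref{WP} to the one-dimensional inequality
\begin{equation*}
\int_0^\infty \psi(r)^2 a(r) \, dr \leq \int_0^\infty \psi'(r)^2 a(r) \, dr,
\end{equation*}
where $a(r) = V'(r)$ and $V(r) = \mathcal{V}_\rho(x, r)$. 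This is a Sturm--Liouville spectral condition that, by a Brooks-type argument, forces the exponential growth rate of $V$ to be at least $2\sqrt{1} = 2$. A more direct route to the explicit estimate is to test \eqref{WP} against $\phi = e^{r_\rho}\eta(r_\rho)$ for a suitable cutoff $\eta$; integration by parts then produces a differential inequality on $V$ that, upon iteration, yields $V(R) \geq c_1 e^{2R} V(1)$.

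The main obstacle is the sharpness of the rate $2$ in the lower bound. A naive linear cutoff $\phi = (R - r_\rho)_+$ only yields $V(R) \geq 2V(R-1)$, hence the suboptimal rate $\log 2 < 2$; achieving the Brooks--Buser rate $2\sqrt{\lambda_1} = 2$ requires the exponential test function $\phi = e^{r_\rho}\eta$, which is naturally singled out by the identity $|\nabla r_\rho|^2 = \rho$, together with a careful ODE or iteration argument to convert the spectral statement into the pointwise volume comparison. The upper bound, by contrast, is largely a matter of combining \eqref{O} with standard comparison geometry at the Harnack scale $\delta/\sqrt{\rho}$.
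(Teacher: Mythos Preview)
Your proposal takes a genuinely different route from the paper for both inequalities, and the two halves have different status.

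\textbf{Upper bound.} The paper does \emph{not} argue by covering and Bishop--Gromov. Instead it feeds the Green's function estimate $\int_{B_\rho(x,t)}\rho\,G(x,y)\,dy\le C(t+1)$ (Theorem~\ref{G2}) together with the Harnack inequality $G(x,y)\ge e^{-ct}\sigma(x)$ on $\partial B_\rho(x,t)$ into a direct comparison, obtaining $\mathcal V_\rho(x,R)\le e^{cR}/\sigma(x)$; a separate area--integral estimate then gives $1/\sigma(x)\le C\,\mathcal V_\rho(x,1)$. Your covering idea can be made to work, but the phrase ``a controlled number of $\rho$-balls'' is the whole difficulty: that number is exactly what you are trying to bound. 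The correct version is to take a maximal $\epsilon$-separated net $\{z_i\}$ in $B_\rho(x,r-\epsilon/2)$, observe that the $B_\rho(z_i,\epsilon/2)$ are disjoint in $B_\rho(x,r)$ while the $B_\rho(z_i,1+2\epsilon)$ cover $B_\rho(x,r+1)$ (using that $d_\rho$ is geodesic), and then invoke the \emph{local} doubling $\mathcal V_\rho(z,1+2\epsilon)\le D\,\mathcal V_\rho(z,\epsilon/2)$, which does follow from Proposition~\ref{E} and Bishop--Gromov at the scale $\delta/\sqrt{\rho}$. This yields $\mathcal V_\rho(x,r+1)\le D\,\mathcal V_\rho(x,r)$ without ever counting balls. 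Interestingly, this route uses neither the Poincar\'e inequality nor the Green's function, so it is more elementary than the paper's; the price is that one must carefully chain the oscillation condition~(\ref{O}) to get local doubling at a fixed $\rho$-scale of order $1$ rather than just at scale $\delta/A$.

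\textbf{Lower bound.} Here there is a genuine gap. Your reduction to the one-dimensional inequality $\int\psi^2 a\le\int(\psi')^2 a$ with $a=\mathcal V_\rho'(x,\cdot)$ is correct, but this inequality by itself yields only the Brooks statement $\liminf_{R\to\infty}\tfrac{1}{R}\log\mathcal V_\rho(x,R)\ge 2$, not the uniform pointwise comparison $\mathcal V_\rho(x,R)\ge c_1 e^{2R}\mathcal V_\rho(x,1)$ with $c_1=c_1(n,K,\delta,A)$. The test function $\phi=e^{r_\rho}\eta$ is exactly the resonant one: the leading $e^{2r_\rho}$ terms cancel in $\int\rho\phi^2\le\int|\nabla\phi|^2$, and what survives is a boundary expression that does not produce a differential inequality strong enough to iterate to the sharp rate. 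The paper instead combines three ingredients you do not mention: the Li--Wang $L^2$ decay $\int_{B_\rho(x,R+1)\setminus B_\rho(x,R)}\rho\,G^2\le Ce^{-2R}\int_{B_\rho(x,2)\setminus B_\rho(x,1)}\rho\,G^2$, the area lower bound $\int_{\partial B_\rho(x,t)}\sqrt{\rho}\,G\ge 1/C$ (which comes from $\int_{\partial B_\rho}|\nabla G|\ge 1$ together with the Cheng--Yau gradient estimate, hence the Ricci hypothesis), and Cauchy--Schwarz. The gradient estimate is what converts the spectral information into the sharp pointwise bound; without it, or an equivalent input, the exponent $2$ with a universal constant is not accessible by the radial test-function argument alone.
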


We point out that the lower bound of the form  
$\mathcal{V}_{\rho }\left( x,R\right) \geq c\,e^{2R}$
first appeared in \cite{LW1}, where the constant $c$ may depend on $x.$

As an application of the solvability of the Poisson equation, we prove the
following result concerning the connectivity at infinity.

\begin{theorem}
\label{Ends}Let $\left( M,g\right) $ be a complete K\"{a}hler manifold
satisfying (\ref{WP}) with weight $\rho$ having properties (\ref{P}), 
(\ref{O}) and $\rho \leq C.$ Assume that there exists $v_{0}>0$ so that for all 
$x\in M$

\begin{equation*}
\mathcal{V}_{\rho }\left( x,1\right) =\int_{B_{\rho }\left( x,1\right) }\rho
\left( y\right) dy\geq v_{0}>0
\end{equation*}
and that the Ricci curvature lower bound $\mathrm{Ric}\geq -\zeta \rho $
holds for some function $\zeta \left( x\right) >0$ converging to zero at
infinity. Then $M$ has only one end.
\end{theorem}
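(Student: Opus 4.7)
The plan is to argue by contradiction. Suppose $M$ admits at least two ends. Since the weighted Poincar\'e inequality forces $M$ to be nonparabolic (see \cite{LW1}), the standard Li--Tam construction produces a bounded harmonic function $u$ on $M$ with $0\le u\le 1$ and converging to distinct limits on two different ends. Under the present curvature, $\rho\le C,$ and $\mathcal{V}_{\rho}(x,1)\ge v_{0}$ assumptions one checks in addition that $u$ has finite Dirichlet energy $\int_{M}|\nabla u|^{2}<\infty.$ The contradiction will come from showing that any such $u$ must in fact be constant.

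The K\"ahler structure enters to promote $u$ to something holomorphic. For a harmonic function on a K\"ahler manifold the Bochner identity reads
\begin{equation*}
\Delta |\partial u|^{2}=|\nabla \partial u|^{2}+\mathrm{Ric}(\partial u,\bar{\partial} u),
\end{equation*}
where $|\nabla \partial u|^{2}=|\partial \partial u|^{2}+|\bar{\partial}\partial u|^{2}.$ I would multiply this by a Lipschitz cutoff $\phi ^{2}$ supported in $B_{\rho }(p,2R)$ with $|\nabla \phi |^{2}\le C\rho ,$ integrate by parts, and insert the curvature bound $\mathrm{Ric}\ge -\zeta \rho .$ The resulting bad term $-\int \zeta \rho |\partial u|^{2}\phi ^{2}$ is absorbed as follows: since $\zeta \to 0$ at infinity, for any $\varepsilon >0$ one has $\zeta \le \varepsilon $ outside a fixed compact set, and on that region the weighted Poincar\'e inequality applied to $\phi |\partial u|$ gives
\begin{equation*}
\int \rho \,(\phi |\partial u|)^{2}\le \int |\nabla (\phi |\partial u|)|^{2},
\end{equation*}
which in turn recombines with the Hessian term on the right via the refined Kato inequality valid on K\"ahler manifolds. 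Passing $R\to \infty $ and using $\int_{M}|\nabla u|^{2}<\infty $ then forces $\bar{\partial}\partial u\equiv 0,$ so $u$ is pluriharmonic and $\omega :=\partial u$ is a holomorphic $(1,0)$-form with $\int_{M}|\omega |^{2}<\infty .$

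At this point I would invoke the Liouville-type result for finite-energy holomorphic objects on $M$ advertised in the abstract, whose proof should run along the same Bochner--Kato--weighted-Poincar\'e scheme, but now with equality in the Bochner formula (since $\omega $ is holomorphic) and the sharper Kato constant available for holomorphic forms; the decay of $\zeta $ at infinity together with the finite $L^{2}$ norm of $\omega $ then force $\omega \equiv 0.$ Consequently $\nabla u\equiv 0,$ so $u$ is constant on $M,$ contradicting the fact that $u$ tends to distinct limits on different ends.

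The main obstacle I expect is the pluriharmonicity step. The bound $\mathrm{Ric}\ge -\zeta \rho $ is scale-matched to the weighted Poincar\'e inequality, and this matching is precisely what makes the absorption of the curvature term possible, but only once $\zeta $ has become small; one must therefore carefully control the contribution from the near region where $\zeta $ is not yet small. Here the uniform volume bound $\mathcal{V}_{\rho }(x,1)\ge v_{0}$ and $\rho \le C$ should provide enough local Poincar\'e and Sobolev control (via the Li--Schoen-type estimate used in the proof of Theorem \ref{G2}) to keep the near piece finite and hence negligible compared with the absorbed far contribution in the limit.
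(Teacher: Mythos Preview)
Your high-level strategy matches the paper's: assume two ends, invoke Li--Tam for a nonconstant bounded finite-energy harmonic $u$, show $u$ is pluriharmonic, then apply the Liouville theorem (Theorem~\ref{Vanishing_Intro}) to reach a contradiction. One minor correction: for pluriharmonicity the paper simply cites Li~\cite{L}, and indeed this step needs no curvature input---for a harmonic $u$ with finite energy on a complete K\"ahler manifold, a direct Stokes-theorem argument applied to $\phi^{2}(i\partial\bar\partial u)^{2}\wedge\omega^{n-2}$ gives $\int\phi^{2}|\partial\bar\partial u|^{2}\le C\int|\nabla\phi|^{2}|\nabla u|^{2}\to 0$. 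So the Bochner--curvature absorption you sketch for this step is unnecessary, and what you flag as ``the main obstacle'' is not one.

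The genuine gap is in your account of the Liouville step. You suggest that Bochner--Kato--weighted-Poincar\'e, together with $\zeta\to 0$ and $\int|\omega|^{2}<\infty$, forces $\omega\equiv 0$. But that mechanism by itself only yields exponential $L^{2}$ decay of $|\omega|$ (and, after Moser iteration, pointwise exponential decay); this is exactly the \emph{preliminary} step in the paper's proof of Theorem~\ref{Vanishing}. Passing from exponential decay to actual vanishing is where the Poisson-equation machinery becomes essential. In Theorem~\ref{Decay} one sets $v=1/(-\ln(\eta/\Lambda e))$, notes $\Delta v\ge -\zeta\rho v^{2}$, solves $\Delta w=-\zeta\rho v^{2}$ with the decay estimate of Theorem~\ref{P4} (which rests on the Green's function bound of Theorem~\ref{G2}), and obtains $v\le w$ via the maximum principle; iterating this bootstraps the decay of $v$ without bound until a contradiction. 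The compact region where $\zeta$ is large is handled not by local Poincar\'e or Sobolev control as you propose, but by this global comparison with Poisson solutions---a direct Bochner integration cannot make a fixed positive near-region contribution disappear. In short, the Green's function and Poisson results of Sections~\ref{G} and~\ref{PP} are the heart of the argument, not auxiliary background.
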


The novelty of the result is that the assumption on the Ricci curvature is
essentially imposed only at infinity, yet we are able to conclude that the
manifold is connected at infinity. This is of course not true in the
Riemannian setting. Indeed, the connected sum of copies of $\mathbb{R}^{n}$
for $n\geq 3$ has non-negative Ricci curvature outside a compact set and
satisfies a weighted Poincar\'{e} inequality of the form 
$\rho(x)=c\,r^{-2}(x).$ Obviously, it can have as many ends as one wishes.

We remark that our assumption is vacuous when $\rho =\lambda _{1}\left(
\Delta \right) $ is constant according to the aforementioned result of Cheng 
\cite{C}. However, in the case $\lambda _{1}\left( \Delta \right) >0,$ there
are various results concerning the number of ends for both Riemannian and 
K\"{a}hler manifolds. We refer to the papers \cite{LW, LW3, LW2, M} for
more information and further references. It should also be noted, although
not explicitly stated there, that the argument in \cite{LW1} already implies that 
$M$ necessarily has finitely many ends, without assuming $M$ is K\"{a}hler.

To prove Theorem \ref{Ends}, we first observe the assumption that

\begin{equation*}
\mathcal{V}_{\rho }\left( x,1\right) =\int_{B_{\rho }\left( x,1\right) }\rho
\left( y\right) dy\geq v_{0}>0
\end{equation*}
ensures all ends of $M$ must be nonparabolic. Therefore, by the result of Li
and Tam \cite{LT}, $M$ admits a nonconstant bounded harmonic function $u$
with finite energy if it is not connected at infinity. According to \cite{L}, 
such $u$ must be pluriharmonic as $M$ is K\"ahler. One may view $u$ as a
holomorphic map from $M$ into the hyperbolic disk. The proof is then
completed by establishing a Liouville type result for such maps. It is
well-known from Yau's Schwarz lemma \cite{Y} that such map $u$ must be
constant if the Ricci curvature of the domain manifold $M$ is nonnegative.
The result was generalized by Li and Yau \cite{LY1} to address the case that
the negative part of the Ricci curvature of $M$ is integrable. They
concluded that $u$ is necessarily a constant map if $M$ is in addition
nonparabolic. Our next result may be viewed as further development along
this line.

\begin{theorem}
\label{Vanishing_Intro}Let $\left( M,g\right) $ be a complete K\"{a}hler
manifold satisfying the assumptions of Theorem \ref{Ends}. Assume that 
$F:M\rightarrow N$ is a finite energy holomorphic map into a complex
Hermitian manifold $N$ of non-positive bisectional curvature. Then $F$ 
must be a constant map.
\end{theorem}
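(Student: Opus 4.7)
The plan is to prove that $v := |\partial F|$ vanishes identically, which forces $F$ to be constant. First I would invoke Yau's Schwarz lemma: since $F$ is holomorphic, $M$ is K\"ahler, $N$ has nonpositive bisectional curvature, and $\mathrm{Ric}_M \geq -\zeta\rho$, one has $\Delta\log|\partial F|^2 \geq -2\zeta\rho$ on $\{v>0\}$, equivalent to the refined pointwise inequality
\[
v\Delta v \geq |\nabla v|^2 - \zeta\rho\,v^2,
\]
which makes sense weakly on all of $M$ via the standard approximation $v_\epsilon := \sqrt{|\partial F|^2+\epsilon}$.

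Next I would multiply by a cutoff $\phi^2$ for $\phi \in C_c^\infty(M)$ and integrate by parts; the extra $|\nabla v|^2$ term on the right combines with the integration-by-parts contribution to yield
\[
2\int \phi^2|\nabla v|^2 + 2\int \phi v\,\langle \nabla v,\nabla\phi\rangle \leq \int \zeta\rho\,v^2\phi^2.
\]
Applying the weighted Poincar\'e inequality~\eqref{WP} to the test function $\phi v$ and substituting this into the expansion of $\int|\nabla(\phi v)|^2$, the gradient and cross terms cancel favorably and one obtains the key estimate
\[
\int (1-\zeta)\,\rho\,v^2\phi^2 \leq \int v^2|\nabla\phi|^2.
\]
Choosing $\phi_R$ to be the $\rho$-distance cutoff equal to $1$ on $B_\rho(p,R)$ and zero outside $B_\rho(p,2R)$, with $|\nabla^\rho\phi_R|_\rho \leq 1/R$ (so $|\nabla\phi_R|^2 \leq \rho/R^2$), the hypotheses $\rho \leq C$ and $\int_M v^2<\infty$ force $\int v^2|\nabla\phi_R|^2 \leq CR^{-2}\int_M v^2 \to 0$ as $R\to\infty$. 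Taking the limit yields $\int_M(1-\zeta)\rho v^2 \leq 0$.

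The main obstacle is deducing the pointwise vanishing $v\equiv 0$ from this integral inequality, because $\zeta$ may exceed $1$ on a compact set. To close the argument I would view $v$ as a nonnegative subsolution of the Schr\"odinger operator $L := -\Delta + \zeta\rho$, whose zero-order potential $\zeta\rho$ is strictly positive. A local Moser mean-value inequality on $\rho$-balls $B_\rho(x,r_0)$ of fixed small radius $r_0$ (where $\zeta\rho$ is uniformly bounded and $\mathcal V_\rho(x,r_0)\geq v_0$ by hypothesis) gives $v(x)^2 \leq C\int_{B_\rho(x,r_0)}\rho\,v^2$; the tail-decay $\int_{B_\rho(x,r_0)}\rho\,v^2 \to 0$ as $x\to\infty$ (from $\rho\leq C$ and $\int_M v^2<\infty$) then forces $v(x)\to 0$ at infinity. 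Hopf's strong maximum principle applied to $Lv \leq 0$ with strictly positive potential $\zeta\rho$ rules out any interior positive supremum of $v$, so $\sup_M v = 0$ and hence $\partial F \equiv 0$, i.e., $F$ is constant.
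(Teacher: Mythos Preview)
Your integral estimate $\int_M (1-\zeta)\rho v^2 \leq 0$ is correct, as is the observation that $v\to 0$ at infinity via Moser iteration. The gap is in the final step: you assert that $v$ is a subsolution of $L=-\Delta+\zeta\rho$, i.e.\ that $Lv\le 0$, but the differential inequality you have is $\Delta v \ge -\zeta\rho v$, which is $(-\Delta-\zeta\rho)v\le 0$, not $(-\Delta+\zeta\rho)v\le 0$. Equivalently, $v$ satisfies $\Delta v + \zeta\rho v \ge 0$, an inequality with a \emph{positive} zeroth-order coefficient in front of $v$. Hopf's maximum principle does not forbid interior positive maxima for such subsolutions: at a maximum $x_0$ one has $\Delta v(x_0)\le 0$ and $-\zeta\rho\,v(x_0)<0$, and these are perfectly compatible. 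Concretely, a ground state of a Schr\"odinger operator $-\Delta - V$ with a compactly supported potential well can be positive, decay to zero at infinity, and satisfy exactly $\Delta v + Vv = 0$; so your maximum-principle step cannot succeed in general.

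Because $\zeta$ is only assumed to vanish at infinity and may be large on a compact set, neither the integral inequality nor the decay $v\to 0$ alone forces $v\equiv 0$. The paper closes this gap by a genuinely different route: it first upgrades the $L^2$ tail control to pointwise \emph{exponential} decay $v(x)\le \Lambda e^{-\varepsilon r_\rho(x)}$ (via the Li--Wang $L^2$ decay and Moser iteration), and then invokes Theorem~\ref{Decay}. That theorem is the heart of the matter: it passes to $w=1/(-\ln v)$, solves the Poisson equation $\Delta u=-\rho\zeta w^2$ using the Green's function estimates developed in Sections~\ref{G}--\ref{PP}, compares $w\le u$ by the maximum principle (now with the \emph{correct} sign, since $w-u$ is genuinely subharmonic), and iterates to force super-polynomial and eventually faster-than-exponential decay of $w$, yielding a contradiction. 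In short, the Poisson-equation machinery of the paper is used essentially; your proposed shortcut via a single application of Hopf's lemma does not work because the sign of the potential is wrong.
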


The paper is organized as follows. In
Section \ref{Intro}, after making some preliminary observations relating 
$\rho$-balls to the background metric balls, we translate Poincar\'e
inequality, Sobolev inequality and gradient estimate from the background
metric balls to the $\rho$-balls. With these preparations, we prove 
Theorem \ref{G2} in Section \ref{G}. Section \ref{PP} is devoted to the Poisson
equation and the proof of Theorem \ref{P2}. In Section \ref{CW}, we discuss
applications of the Poisson equation and prove the Liouville property for
finite energy holomorphic maps. Section \ref{Co} contains a new treatment of
Theorem \ref{G1}. Comparing to the original proof in \cite{MSW}, we believe
the new one is more streamlined. The proof relies on estimates of heat
kernel and avoids level set consideration. 
It remains to be seen if this new approach can be adapted to handle
Theorem \ref{G2} as well.

\section{Properties of the $\protect\rho $-distance \label{Intro}}

In this section, we make preparations for proving 
Theorem \ref{G2} by relating both the geometry and analysis of the
$\rho$-balls to the background metric balls. Consider the $\rho$-distance
function, defined to be

\begin{equation*}
r_{\rho }(x,y)=\inf_{\gamma }l_{\rho }(\gamma ),
\end{equation*}
the infimum of the length with respect to metric $ds_{\rho }^{2}$ of all smooth 
curves joining $x$ and $y.$  For a fixed point $x\in M,$ one checks readily
that $|\nabla r_{\rho }|^{2}(x,y)=\rho (y).$ When there is no confusion, the $\rho$-distance from 
$x$ to a fixed point $p$ is simply denoted by $r_{\rho }\left( x\right).$ More generally,
for any function $v\in C^{1}\left( M\right),$ denote by $\nabla _{\rho }v$ the gradient of $v$ 
with respect to $ds_{\rho }^{2}.$ Then its length with respect to $ds_{\rho }^{2}$ is given
by $\left\vert \nabla _{\rho}v\right\vert _{\rho }^{2}=\frac{1}{\rho }\left\vert \nabla v\right\vert^{2}.$

We denote geodesic balls with center $x$ and radius $r$ with respect to $ds^{2}$ 
by $B\left(x,r\right)$ and those with respect to $ds_{\rho }^{2}$ by $B_{\rho}\left( x,r\right).$ 
Our first result shows that $B\left( x,\frac{r}{\sqrt{\rho \left( x\right) }}\right) $ 
and $B_{\rho }\left( x,r\right) $ are comparable when $r\leq 1.$ Without loss of generality, 
we may assume the constants $A$ and $\delta $ specified in (\ref{O}) satisfy $A>16$ and $\delta <1.$
Throughout this section, we use $c$ and $C$ to denote constants depending only on
dimension $n,$ the constant $K$ from the Ricci curvature lower bound, and the constants $A$ and 
$\delta $ in (\ref{O}). Any other dependencies will be explicitly stated.

\begin{proposition}
\label{E}Let $M$ be a complete Riemannian manifold satisfying weighted Poincar\'e inequality (\ref{WP})
with weight $\rho$ having properties (\ref{P}) and (\ref{O}). Then there
exists $C>0$ depending only on $A$ and $\delta $ such that for any $x\in M,$ 
\begin{equation*}
\sup_{B_{\rho }\left( x,1\right) }\rho \leq C\inf_{B_{\rho }\left(
x,1\right) }\rho .
\end{equation*}%
Furthermore, there exist $c_{0}>0$ and $C_{0}>0$ depending only on $A$ and 
$\delta $ such that

\begin{equation*}
B\left( x,\frac{c_{0}}{\sqrt{\rho \left( x\right) }}r\right) \subset B_{\rho
}\left( x,r\right) \subset B\left( x,\frac{C_{0}}{\sqrt{\rho \left( x\right) 
}}r\right)
\end{equation*}
for all $x\in M$ and $0<r\leq 1.$
\end{proposition}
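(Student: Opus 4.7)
The plan is to derive both conclusions from a single iteration of the oscillation hypothesis~(\ref{O}). The basic building block is the observation that any curve $\gamma$ starting from $x$ whose $\rho$-length is strictly less than $\delta/\sqrt{A}$ must remain inside the background ball $B\!\left(x,\delta/\sqrt{\rho(x)}\right)$. Indeed, on that ball (\ref{O}) gives $\rho\ge \rho(x)/A$, so a subsegment of $\gamma$ of background length $s$ contained in the ball has $\rho$-length at least $s\sqrt{\rho(x)/A}$; reaching the background boundary requires $s\ge \delta/\sqrt{\rho(x)}$, hence $\rho$-length at least $\delta/\sqrt{A}$, contradicting the assumption.

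With this in hand I would first establish the comparability claim. Given $y\in B_\rho(x,1)$, choose an almost-minimizing $\rho$-path from $x$ to $y$ of $\rho$-length $<1+\epsilon$ and subdivide it into $N=\lceil 2\sqrt{A}/\delta\rceil$ consecutive arcs, each of $\rho$-length $<\delta/(2\sqrt{A})$. At the successive endpoints $z_0=x,z_1,\ldots,z_N=y$, the building block applied at $z_i$ places $z_{i+1}$ inside $B\!\left(z_i,\delta/\sqrt{\rho(z_i)}\right)$, so (\ref{O}) yields $A^{-1}\rho(z_i)\le \rho(z_{i+1})\le A\,\rho(z_i)$. Iterating, $A^{-N}\rho(x)\le \rho(y)\le A^{N}\rho(x)$, which gives the first assertion with $C=A^{2N}$.

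Next I would verify the two inclusions. For $B\!\left(x,c_0 r/\sqrt{\rho(x)}\right)\subset B_\rho(x,r)$, set $c_0=\min(\delta,1/\sqrt{A})$; then for $r\le 1$ the background geodesic from $x$ to any $y$ with $d(x,y)\le c_0 r/\sqrt{\rho(x)}$ is contained in $B\!\left(x,\delta/\sqrt{\rho(x)}\right)$, where $\rho\le A\rho(x)$ by (\ref{O}), so its $\rho$-length is at most $c_0\sqrt{A}\,r\le r$. Conversely, given $y\in B_\rho(x,r)$, pick an almost-minimizing $\rho$-path from $x$ to $y$; every initial subpath is almost-minimizing to its endpoint, so the whole path lies inside $B_\rho(x,r+\epsilon)\subset B_\rho(x,1)$, where the comparability just proved gives $\rho\ge \rho(x)/C$. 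Hence $\ell_\rho(\gamma)\ge \sqrt{\rho(x)/C}\,d(x,y)$, forcing $d(x,y)\le \sqrt{C}\,r/\sqrt{\rho(x)}$ after letting $\epsilon\to 0$; this gives the second inclusion with $C_0=\sqrt{C}$.

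There is no real obstacle beyond bookkeeping: the whole argument is a finite iteration of~(\ref{O}) controlled entirely by the parameters $A$ and $\delta$, and the only point requiring mild care is the use of $\epsilon$-minimizing $\rho$-paths in place of genuine $\rho$-geodesics, which is justified by the completeness of $ds_\rho^{2}$ assumed in~(\ref{P}).
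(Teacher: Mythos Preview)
Your proof is correct and follows essentially the same approach as the paper: both establish the escape estimate (your building block is precisely the paper's claim~(\ref{e1})), iterate it along a $\rho$-path to obtain the oscillation bound on $B_\rho(x,1)$, and derive the ball inclusions from there. The only minor variation is that for the inclusion $B_\rho(x,r)\subset B\!\left(x,C_0 r/\sqrt{\rho(x)}\right)$ the paper iterates the escape estimate once more along the geodesic in steps of $\rho$-length $\tfrac{\delta}{A}r$, whereas you short-circuit this by directly invoking the already-established comparability of $\rho$ on $B_\rho(x,1)$ to bound $\rho$ from below along the path.
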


\begin{proof}
Let $x\in M$ and $0<r\leq 1.$ Let $\tau \left( t\right),$ $0\leq t\leq T,$
be a minimizing $\rho$-geodesic starting from $x.$
We claim that either

\begin{equation}
\tau \left( \left[ 0,T\right] \right) \subset B\left( x,\frac{\delta }{\sqrt{\rho \left( x\right) }}r\right) 
\text{ \ or } l_{\rho }\left(\tau \right) >\frac{\delta }{A}r.  \label{e1}
\end{equation}

Indeed, if $\tau$ is not entirely
contained in $B\left( x,\frac{\delta }{\sqrt{\rho \left( x\right) }}r\right) ,$ 
then there exists $0<t_{1}<T$ so that 
$\tau \left( t\right) \in B\left( x,\frac{\delta }{\sqrt{\rho \left( x\right) }}r\right)$ 
for all $0\leq t\leq t_{1}$ and 
$\tau \left( t_{1}\right) \in \partial B\left( x,\frac{\delta }{\sqrt{\rho \left( x\right) }}r\right).$ 
Let $\bar{\tau}$ be the restriction of $\tau $ to $\left[ 0,t_{1}\right].$ Then
 
\begin{eqnarray*}
l_{\rho }\left( \bar{\tau}\right) &=&\int_{\bar{\tau}}\left\vert \bar{\tau}%
^{\prime }\right\vert _{\rho }\left( t\right) dt \\
&=&\int_{\bar{\tau}}\sqrt{\rho \left( \bar{\tau}\left( t\right) \right) }%
\left\vert \bar{\tau}^{\prime }\right\vert \left( t\right) dt \\
&\geq &\frac{1}{\sqrt{A}}\sqrt{\rho \left( x\right) }\int_{\bar{\tau}%
}\left\vert \bar{\tau}^{\prime }\right\vert \left( t\right) dt \\
&=&\frac{1}{\sqrt{A}}\,\sqrt{\rho \left( x\right) }\,l\left( \bar{\tau}\right),
\end{eqnarray*}
where in the third line we have used (\ref{O}) and that $\bar{\tau}\left(
t\right) \in B\left( x,\frac{\delta }{\sqrt{\rho \left( x\right) }}r\right) $
for all $t\leq t_{1}$. Since $\tau \left( t_{1}\right) \in \partial B\left(
x,\frac{\delta }{\sqrt{\rho \left( x\right) }}r\right) $, we have $l\left( 
\bar{\tau}\right) \geq \frac{\delta }{\sqrt{\rho \left( x\right) }}r.$
Consequently,

\begin{equation*}
l_{\rho }\left( \bar{\tau}\right) \geq \frac{\delta }{A}\,r.
\end{equation*}
This proves (\ref{e1}).

We infer from the claim that $r\left( x,y\right) <\frac{\delta }{\sqrt{\rho\left( x\right) }}\,r$
when $r_{\rho }\left( x,y\right) <\frac{\delta }{A}\,r.$ In other words,

\begin{equation}
B_{\rho }\left( x,\frac{\delta }{A}r\right) \subset B\left( x,\frac{\delta }{%
\sqrt{\rho \left( x\right) }}r\right)   \label{e2}
\end{equation}%
for all $x\in M$ and all $0<r\leq 1$. By (\ref{O}), this implies 
\begin{equation}
\sup_{B_{\rho }\left( x,\frac{\delta }{A}\right) }\rho \leq A\inf_{B_{\rho
}\left( x,\frac{\delta }{A}\right) }\rho.  \label{e3}
\end{equation}

Now for $x,y\in M$ with $r_{\rho }\left( x,y\right) \leq 1,$ let 
$\tau $ be a minimizing $\rho $-geodesic from $x$ to $y.$ 
Applying (\ref{e3}) successively on each interval of $\rho$-length $\frac{\delta }{A}$ along $\tau,$ 
we conclude that

\begin{equation*}
\frac{1}{C}\rho \left( x\right) \leq \rho \left( y\right) \leq C\rho \left(x\right),
\end{equation*}%
where $C=A^{\frac{2A}{\delta }}.$ Therefore,

\begin{equation}
\sup_{B_{\rho }\left( x,1\right) }\rho \leq C\inf_{B_{\rho }\left(
x,1\right) }\rho   \label{e7}
\end{equation}
for all $x\in M.$ This proves the first part of the proposition.

Note that by (\ref{e2}), for any $z_{1},z_{2}\in M$ and $0<r\leq 1,$

\begin{equation}
r\left( z_{1},z_{2}\right) <\frac{\delta }{\sqrt{\rho \left( z_{1}\right) }}\,r
\text{ whenever }r_{\rho }\left( z_{1},z_{2}\right) <\frac{\delta }{A}\,r.  \label{e4}
\end{equation}
So for $x,y\in M$ with $r_{\rho }\left( x,y\right) \leq r,$ applying (\ref{e4}) successively on intervals of 
$\rho$-length $\frac{\delta }{A}\,r$ along a minimizing $\rho $-geodesic $\tau $ from 
$x$ to $y$ and using (\ref{e7}), one concludes that

\begin{equation*}
r\left(x, y\right) \leq \frac{C_0}{\sqrt{\rho \left(x \right) }}\,r
\end{equation*}
for some $C_{0}>0$ depending on $A$ and $\delta.$ Hence,

\begin{equation}
B_{\rho }\left( x,r\right) \subset B\left( x,\frac{C_{0}}{\sqrt{\rho \left(
x\right) }}r\right)  \label{e9}
\end{equation}
for all $x\in M$ and $r\leq 1.$

We now show that 

\begin{equation}
B\left( x,\frac{c_{0}}{\sqrt{\rho \left( x\right) }}r\right) \subset B_{\rho
}\left( x,r\right)  \label{e8}
\end{equation}%
for all $x\in M$ and $r\leq 1$ with $c_{0}=\frac{\delta }{A}.$ 

Indeed, for $y\in B\left( x,\frac{c_{0}}{\sqrt{\rho \left( x\right) }}r\right) $ and
$\gamma \left( t\right),$ $0\leq t\leq T<\frac{c_{0}}{\sqrt{\rho \left( x\right) }}r,$
a minimizing geodesic joining $x$ and $y,$ we have

\begin{eqnarray*}
l_{\rho }\left( \gamma \right) &=&\int_{\gamma }\left\vert \gamma ^{\prime
}\right\vert _{\rho }\left( t\right) dt \\
&=&\int_{\gamma }\sqrt{\rho \left( \gamma \left( t\right) \right) }%
\left\vert \gamma ^{\prime }\right\vert \left( t\right) dt \\
&\leq &\sqrt{A}\sqrt{\rho \left( x\right) }\int_{\gamma }\left\vert \gamma
^{\prime }\right\vert \left( t\right) dt \\
&=&\sqrt{A}\sqrt{\rho \left( x\right) }l\left( \gamma \right)\\
&\leq& c_{0}\sqrt{A}\,r\\
&<& r,
\end{eqnarray*}%
where in the third line we have used (\ref{O}) together with $\gamma \left( t\right)
\in B\left( x,\frac{\delta }{\sqrt{\rho \left( x\right) }}r\right) $ for all 
$0\leq t\leq T.$  This proves (\ref{e8}).

From (\ref{e8}) and (\ref{e9}) we conclude that 

\begin{equation*}
B\left( x,\frac{c_{0}}{\sqrt{\rho \left( x\right) }}r\right) \subset B_{\rho
}\left( x,r\right) \subset B\left( x,\frac{C_{0}}{\sqrt{\rho \left( x\right) 
}}r\right) 
\end{equation*}%
for all $x\in M$ and $r\leq 1.$ This proves the proposition.
\end{proof}

The previous result enables us to translate some properties on geodesic balls
of metric $ds^{2}$ to those of $ds_{\rho }^{2}.$ Denote by 
$\lambda _{1}\left( B_{\rho }\left( x,r\right) \right)$ the first Dirichlet
eigenvalue of $B_{\rho }\left( x,r\right)$ with respect to metric $ds^2.$ Then

\begin{equation*}
\lambda _{1}\left( B_{\rho }\left( x,r\right) \right) \int_{B_{\rho }\left(
x,r\right) }\phi ^{2}\leq \int_{B_{\rho }\left( x,r\right) }\left\vert
\nabla \phi \right\vert ^{2}
\end{equation*}
for any $\phi \in C_{0}^{\infty }\left( B_{\rho }\left( x,r\right) \right).$
Here and in the following, all integrals are with respect to the Riemannian measure
induced by the metric $ds^2.$
Similarly, we use $C_{S}\left( B_{\rho }\left( x,r\right) \right)$ to denote
the optimal constant for the following Dirichlet Sobolev inequality
on $B_{\rho }\left(x,r\right).$ 

\begin{equation*}
C_{S}\left( B_{\rho }\left( x,r\right) \right) \left( \fint_{B_{\rho
}\left( x,r\right) }\phi ^{\frac{2n}{n-2}}\right)^{\frac{n-2}{n}}
\leq \fint_{B_{\rho }\left(x,r\right) }\left\vert \nabla \phi \right\vert ^{2}
+\frac{\rho(x)}{r^2}\,\fint_{B_{\rho }\left( x,r\right) }\phi ^{2}
\end{equation*}
for $\phi \in C_{0}^{\infty }\left( B_{\rho }\left( x,r\right) \right),$
where $\fint_{B_{\rho }\left( x,r\right) }u$ is the average value of
function $u$ over the set $B_{\rho }\left( x,r\right),$ namely,

\begin{equation*}
\fint_{B_{\rho }\left( x,r\right) }u=\frac{1}{\mathrm{V}\left( B_{\rho
}\left( x,r\right) \right) }\int_{B_{\rho }\left( x,r\right) }u
\end{equation*}
with $\mathrm{V}\left( B_{\rho }\left( x,r\right) \right) $ being the
volume of $B_{\rho }\left( x,r\right) $ with respect to metric $ds^{2}.$ 
We refer to $C_{S}\left( B_{\rho }\left( x,r\right) \right) $ as the Dirichlet Sobolev
constant for $B_{\rho }\left( x,r\right).$

\begin{lemma}
\label{L}Let $\left( M^{n},g\right) $ be a complete manifold satisfying (\ref{WP}),
(\ref{P}) and (\ref{O}). Assume that $\mathrm{Ric}\geq -K\rho $
on $M$ for some $K\geq 0.$ Then for some $C>0,$ 

\begin{eqnarray*}
\lambda _{1}\left( B_{\rho }\left( x,r\right) \right) &\geq &\frac{1}{Cr^{2}}%
\rho \left( x\right) \text{ } \\
C_{S}\left( B_{\rho }\left( x,r\right) \right) &\geq &\frac{1}{Cr^{2}}\rho
\left( x\right) 
\end{eqnarray*}
for any $x\in M$ and $0<r\leq \frac{\delta }{2C_{0}}.$ Here $C_{0}$ is the
constant specified in Proposition \ref{E}.
\end{lemma}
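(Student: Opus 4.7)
The plan is to reduce both inequalities on the $\rho$-ball $B_{\rho}(x,r)$ to standard Poincar\'e and Sobolev inequalities on an ordinary background-metric ball, using Proposition \ref{E} as a bridge.

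Set $s:=\frac{C_{0}}{\sqrt{\rho(x)}}\,r$ so that Proposition \ref{E} gives $B_{\rho}(x,r)\subset B(x,s)$. The restriction $r\leq \delta/(2C_{0})$ is equivalent to $s\leq \delta/(2\sqrt{\rho(x)})$, so in particular $B(x,s)\subset B(x,\delta/\sqrt{\rho(x)})$, and hypothesis (\ref{O}) then forces $\rho(y)\leq A\,\rho(x)$ throughout $B(x,s)$. Combined with $\mathrm{Ric}\geq -K\rho$, this produces a \emph{constant} Ricci lower bound
$$\mathrm{Ric}\geq -KA\,\rho(x)\quad\text{on }B(x,s),$$
and the product of this curvature bound with $s^{2}$ is $KAC_{0}^{2}r^{2}\leq KA\delta^{2}/4$, which depends only on the admissible constants $n,K,\delta,A$.

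With a constant Ricci lower bound and a controlled scale on $B(x,s)$, I would invoke the Dirichlet eigenvalue estimate of Li--Schoen \cite{LS} (already referenced in the introduction for a very similar purpose) to obtain $\lambda_{1}(B(x,s))\geq c/s^{2}=c\rho(x)/r^{2}$. Domain monotonicity of the first Dirichlet eigenvalue, applied to $B_{\rho}(x,r)\subset B(x,s)$, then yields the desired lower bound for $\lambda_{1}(B_{\rho}(x,r))$. For the Sobolev constant the strategy is identical: the bounded-geometry setting on $B(x,s)$ provides Saloff-Coste's Dirichlet Sobolev inequality
$$\Bigl(\fint_{B(x,s)}\phi^{\tfrac{2n}{n-2}}\Bigr)^{\tfrac{n-2}{n}}\leq C\,s^{2}\fint_{B(x,s)}|\nabla \phi|^{2}+C\fint_{B(x,s)}\phi^{2}$$
for $\phi \in C_{0}^{\infty}(B(x,s))$, with $C$ depending only on $n$ and $KA\delta^{2}$. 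Dividing by $s^{2}$ converts the factor $s^{-2}=\rho(x)/(C_{0}^{2}r^{2})$ into precisely the weight $\rho(x)/r^{2}$ appearing in the definition of $C_{S}(B_{\rho}(x,r))$. For test functions $\phi$ supported in $B_{\rho}(x,r)$ one may replace the domain of integration by $B_{\rho}(x,r)$; the averages $\fint_{B(x,s)}$ and $\fint_{B_{\rho}(x,r)}$ differ by the ratio of volumes, which is pinched between universal constants by Bishop--Gromov applied with the constant Ricci lower bound $-KA\rho(x)$ on $B(x,s)$, together with the two-sided inclusion
$$B\Bigl(x,\tfrac{c_{0}}{\sqrt{\rho(x)}}r\Bigr)\subset B_{\rho}(x,r)\subset B(x,s)$$
furnished by Proposition \ref{E}.

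The only point requiring care is bookkeeping: one must verify that every constant produced along the way depends solely on $n,K,\delta,A$. This is guaranteed because the restriction $r\leq \delta/(2C_{0})$ keeps $\sqrt{\rho(x)}\cdot s\leq \delta/2$ and hence keeps $\mathrm{Ric}\cdot s^{2}$ universally bounded, eliminating any residual dependence on the basepoint $x$ or on the value $\rho(x)$. There is no genuine analytic obstacle beyond correctly citing and scaling the classical eigenvalue and Sobolev estimates for balls of bounded geometry.
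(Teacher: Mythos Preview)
Your proposal is correct and follows essentially the same route as the paper: set $R=\frac{C_{0}}{\sqrt{\rho(x)}}r$, use (\ref{O}) to get a constant Ricci lower bound $\mathrm{Ric}\geq -c\rho(x)$ on the relevant background ball, then invoke Li--Schoen for the eigenvalue and Saloff--Coste for the Sobolev inequality, and finally pass to $B_{\rho}(x,r)\subset B(x,R)$ by domain monotonicity. Two small remarks: (i) both Li--Schoen and Saloff--Coste require the Ricci bound on the \emph{doubled} ball $B(x,2R)$, which you should state explicitly (it holds because $2R\leq \delta/\sqrt{\rho(x)}$); (ii) your treatment of the Sobolev constant, comparing $\fint_{B(x,s)}$ with $\fint_{B_{\rho}(x,r)}$ via Bishop--Gromov and the two-sided inclusion, works but is slightly more elaborate than necessary --- the paper keeps the explicit factor $\mathrm{V}(B(x,R))^{2/n}$ from Saloff--Coste, replaces it by the smaller $\mathrm{V}(B_{\rho}(x,r))^{2/n}$ (valid since the factor sits on the left), and then the volume powers cancel exactly when one converts all integrals to averages over $B_{\rho}(x,r)$.
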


\begin{proof}
According to Li-Schoen \cite{LS}, if $\mathrm{Ric}\geq -H$ on $B\left(
x,2R\right) $, then

\begin{equation}
\lambda _{1}\left( B\left( x,R\right) \right) \geq \frac{1}{R^{2}}%
e^{-C\left( 1+R\sqrt{H}\right) }  \label{LS}
\end{equation}%
with $C$ depending only on dimension. For $r\leq \frac{\delta }{2C_{0}}$ we
have 

\begin{equation*}
B\left( x,\frac{2C_{0}}{\sqrt{\rho \left( x\right) }}r\right) \subset
B\left( x,\frac{\delta }{\sqrt{\rho \left( x\right) }}\right).
\end{equation*}%
The Ricci curvature lower bound assumption together with (\ref{O}) implies that
 
\begin{equation}
\mathrm{Ric}\geq -c\rho \left( x\right) \text{ on }B\left( x,\frac{2C_{0}}
{\sqrt{\rho \left( x\right) }}r\right).  \label{x8}
\end{equation}
Using (\ref{LS}) and (\ref{x8}) we get 

\begin{equation*}
\lambda _{1}\left( \text{ }B\left( x,\frac{C_{0}}{\sqrt{\rho \left( x\right) 
}}r\right) \right) \geq \frac{1}{Cr^{2}}\rho \left( x\right) 
\end{equation*}
for any $x\in M$ and $0<r\leq \frac{\delta }{2C_{0}}.$ Since
Proposition \ref{E} asserts

\begin{equation}
B_{\rho }\left( x,r\right) \subset B\left( x,\frac{C_{0}}{\sqrt{\rho \left(
x\right) }}r\right),  \label{x8.1}
\end{equation}
it follows that
\begin{equation*}
\lambda _{1}\left( B_{\rho }\left( x,r\right) \right) \geq \frac{1}{Cr^{2}}\rho \left( x\right) 
\end{equation*}
for any $x\in M$ and $0<r\leq \frac{\delta }{2C_{0}}.$ This proves the
eigenvalue lower bound.

To prove the Sobolev constant bound, we use a result of Saloff-Coste \cite{SC}
that the following Sobolev inequality holds on $B\left( x,R\right)$ if
$\mathrm{Ric}\geq -H$ on $B\left( x,2R\right).$ 
 
\begin{eqnarray}
&&\frac{1}{R^{2}}e^{-C\left( 1+\sqrt{H}R\right) }\mathrm{V}\left( B\left(
x,R\right) \right) ^{\frac{2}{n}}\left( \int_{B\left( x,R\right) }\phi ^{%
\frac{2n}{n-2}}\right) ^{\frac{n-2}{n}}  \label{cs1} \\
&\leq &\int_{B\left( x,R\right) }\left\vert \nabla \phi \right\vert ^{2}+%
\frac{1}{R^{2}}\int_{B\left( x,R\right) }\phi ^{2}  \notag
\end{eqnarray}%
for any $\phi \in C_{0}^{\infty }\left( B\left( x,R\right) \right).$ 

Now for $R=\frac{C_{0}}{\sqrt{\rho \left( x\right) }}\,r,$ in view of (\ref{x8}), 
applying (\ref{cs1}), we get

\begin{equation*}
\frac{1}{C}\frac{\rho \left( x\right) }{r^{2}}\mathrm{V}\left( B\left( x,R\right) \right) ^{\frac{2}{n}}
\left( \int_{B\left(x,R\right) }\phi ^{\frac{2n}{n-2}}\right) ^{\frac{n-2}{n}}\leq 
\int_{B\left( x,R\right) }\left\vert \nabla \phi \right\vert ^{2}
+\frac{\rho\left( x\right) }{r^{2}}\int_{B\left( x,R\right) }\phi ^{2}
\end{equation*}%
for any $\phi \in C_{0}^{\infty }\left( B\left( x,R\right) \right).$

However, by (\ref{x8.1}), we have $B_{\rho }\left( x,r\right) \subset
B\left( x,R\right).$ It follows for $\phi \in C_{0}^{\infty }\left( B_{\rho }\left( x,r\right) \right)$
that 

\begin{equation*}
\frac{1}{C}\frac{\rho \left( x\right) }{r^{2}}\mathrm{V}\left( B_{\rho
}\left( x,r\right) \right) ^{\frac{2}{n}}\left( \int_{B_{\rho }\left(
x,r\right) }\phi ^{\frac{2n}{n-2}}\right) ^{\frac{n-2}{n}}\leq \int_{B_{\rho
}\left( x,r\right) }\left\vert \nabla \phi \right\vert ^{2}+\frac{\rho
\left( x\right) }{r^{2}}\int_{B_{\rho }\left( x,r\right) }\phi ^{2}.
\end{equation*}
This completes the proof of the lemma.
\end{proof}

A well known result of Cheng and Yau \cite{CY} says that for $u>0$
a harmonic function on $B\left( x,R\right),$
 
\begin{equation}
\sup_{B\left( x,\frac{R}{2}\right) }\left\vert \nabla \ln u\right\vert \leq
c\left( \sqrt{H}+\frac{1}{R}\right)  \label{CY0}
\end{equation}%
for some constant $c>0$ depending only on dimension $n$
provided that the Ricci curvature $\mathrm{Ric}\geq -H$ on $B\left( x,R\right)$ 
for some nonnegative constant $H.$ We now use Proposition \ref{E} to translate 
this estimate to $\rho$-balls. 

\begin{lemma}
\label{CY}Let $\left( M^{n},g\right) $ be a complete manifold satisfying (\ref{WP}),
(\ref{P}) and (\ref{O}). Assume that $\mathrm{Ric}\geq -K\rho $
on $M$ for some $K\geq 0.$ Then there exists $c>0$ such that for $u>0$ a harmonic
function on $B_{\rho }\left( x,r\right) $ with $0<r\leq 1,$ 
 
\begin{equation*}
\sup_{B_{\rho }\left( x,\frac{r}{2}\right) }\left\vert \nabla _{\rho }\ln
u\right\vert _{\rho }\leq \frac{c}{r}.
\end{equation*}%
Consequently, 
\begin{equation*}
u\left( y\right) \leq cu\left( z\right) 
\end{equation*}
for $y, z\in B_{\rho }\left( x,\frac{r}{2}\right).$
\end{lemma}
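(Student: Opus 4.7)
The plan is to reduce to the classical Cheng--Yau gradient estimate (\ref{CY0}) on background-metric balls via Proposition \ref{E}, in the same spirit as the proof of Lemma \ref{L}. First I establish the pointwise gradient bound, then deduce the Harnack inequality by integrating along a carefully chosen path. The only real subtlety is ensuring that the background ball used in the Cheng--Yau step is small enough for condition (\ref{O}) to give a uniform comparison $\rho\leq A\rho(y)$; this is where the explicit relation $c_{0}=\delta/A$ from Proposition \ref{E} together with $r\leq 1$ is needed.

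For the gradient bound at a fixed $y\in B_{\rho}(x,r/2)$, note that $B_{\rho}(y,r/2)\subset B_{\rho}(x,r)$ by the triangle inequality, so $u$ is harmonic on this $\rho$-ball; Proposition \ref{E} then supplies a background ball $B(y,c_{0}r/(2\sqrt{\rho(y)}))$ sitting inside it. Since $c_{0}<\delta$, condition (\ref{O}) yields $\rho\leq A\rho(y)$ and hence $\mathrm{Ric}\geq -KA\rho(y)$ on this background ball. Applying (\ref{CY0}) with $R=c_{0}r/(2\sqrt{\rho(y)})$ and $H=KA\rho(y)$ gives
\begin{equation*}
|\nabla \ln u|(y)\leq c\left(\sqrt{KA\rho(y)}+\frac{2\sqrt{\rho(y)}}{c_{0}r}\right)\leq \frac{c'\sqrt{\rho(y)}}{r},
\end{equation*}
where the assumption $r\leq 1$ absorbs the $\sqrt{K}$ contribution into the second term. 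Dividing by $\sqrt{\rho(y)}$ converts the left-hand side into $|\nabla_{\rho}\ln u|_{\rho}(y)$ and produces the claimed bound $c'/r$.

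For the Harnack assertion, given $y,z\in B_{\rho}(x,r/2)$, I would connect them by the concatenation of minimizing $\rho$-geodesics from $y$ to $x$ and from $x$ to $z$, whose total $\rho$-length is strictly less than $r$. A point $w$ on the first sub-geodesic satisfies $r_{\rho}(w,x)\leq r_{\rho}(y,x)<r/2$, and similarly on the second, so every point of this path lies in $B_{\rho}(x,r/2)$ and the gradient bound from the previous step applies there with constant $c'/r$. Integrating along the path,
\begin{equation*}
|\ln u(y)-\ln u(z)|\leq \frac{c'}{r}\bigl(r_{\rho}(y,x)+r_{\rho}(x,z)\bigr)<c',
\end{equation*}
which gives $u(y)\leq e^{c'}u(z)$, as desired.
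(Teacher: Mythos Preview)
Your proof is correct and follows essentially the same approach as the paper: reduce to the Cheng--Yau estimate on a background ball $B\bigl(y,\tfrac{c_{0}r}{2\sqrt{\rho(y)}}\bigr)\subset B_{\rho}(y,r/2)\subset B_{\rho}(x,r)$ via Proposition~\ref{E}, use (\ref{O}) to bound the Ricci curvature there by a constant multiple of $\rho(y)$, and then integrate the resulting $\rho$-gradient bound along a $\rho$-geodesic through $x$. The only cosmetic difference is that the paper compares $u(y)$ and $u(z)$ each to $u(x)$ separately rather than concatenating the two geodesics, which amounts to the same thing.
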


\begin{proof}
For $y\in B_{\rho }\left( x,\frac{r}{2}\right),$
the triangle inequality implies that $B_{\rho }\left( y,\frac{r}{2}\right) \subset
B_{\rho }\left( x,r\right).$ On the other hand, by Proposition \ref{E}, we have

\begin{equation*}
B\left( y,\frac{c_{0}}{2\sqrt{\rho \left( y\right) }}r\right) \subset
B_{\rho }\left( y,\frac{r}{2}\right).
\end{equation*}
Therefore, $u$ is harmonic on $B\left( y,\frac{c_{0}}{2\sqrt{\rho \left(y\right) }}r\right).$ 
Using (\ref{O}) one sees that $\mathrm{Ric}\geq -c\rho \left( y\right) $ on 
$B\left( y,\frac{c_{0}}{2\sqrt{\rho \left(y\right) }}r\right).$ In conclusion, by (\ref{CY0}), 

\begin{equation*}
\left\vert \nabla \ln u\right\vert \left( y\right) \leq \frac{c}{r}\sqrt{%
\rho \left( y\right) }.
\end{equation*}
This can be rewritten into
 
\begin{equation}
\left\vert \nabla _{\rho }\ln u\right\vert _{\rho }\left( y\right) \leq 
\frac{c}{r}.  \label{CY2}
\end{equation}

Integrating (\ref{CY2}) along a minimizing $\rho$-geodesic
joining $x$ and $y$ yields

\begin{equation*}
\frac{1}{c}u\left( y\right) \leq u\left( x\right) \leq cu\left( y\right) 
\end{equation*}
for $y\in B_{\rho }\left( x,\frac{r}{2}\right).$  
This obviously implies 

\begin{equation*}
u\left( y\right) \leq cu\left( z\right) 
\end{equation*}
for $y, z\in B_{\rho }\left( x,\frac{r}{2}\right).$
The lemma is proved.
\end{proof}

\section{Green's function estimates\label{G}}

With the preparations in the previous section, we now prove Theorem \ref{G2}.
Throughout this section, unless otherwise specified, we continue to use 
$c$ and $C$ to denote constants depending only on $n,$ $K,$ $\delta$ and $A.$ 

Let us first note the following simple consequence of Lemma \ref{CY} which will be
used repeatedly below. For any $0<r\leq 1$ and $y\in M\backslash B_{\rho
}\left( x,r\right),$ apply the local gradient estimate Lemma \ref{CY} to the harmonic
function $u\left( q\right) =G\left( x,q\right) $ on $B_{\rho }\left( y,r\right).$ 
Then

\begin{equation}
\sup_{z\in B_{\rho }\left( y,\frac{r}{2}\right) }\left\vert \nabla _{\rho
}\ln G\right\vert _{\rho }\left( x,z\right) \leq \frac{c}{r},  \label{x3}
\end{equation}%
where the gradient is computed with respect to variable $z.$
Consequently, we have 

\begin{equation}
G\left( x,z_{1}\right) \leq CG\left( x,z_{2}\right)   \label{x3.1}
\end{equation}
for all $z_{1}, z_{2}\in B_{\rho }\left( y,\frac{r}{2}\right).$

We first establish a local Harnack estimate. 

\begin{lemma}
\label{H}Let $\left( M^{n},g\right) $ be a complete manifold satisfying the
weighted Poincar\'{e} inequality (\ref{WP}) with weight $\rho $ having
properties (\ref{P}) and (\ref{O}). Assume that $\mathrm{Ric}\geq -K\rho $
on $M$ for some $K\geq 0.$ Then 

\begin{equation}
G\left( x, y\right) \leq C_{1}\left( \frac{r_{2}}{r_{1}}\right)
^{C_{1}}\,G\left( x, z\right)  \label{t2}
\end{equation}
for any $y\in \partial B_{\rho }\left( x,r\right) $ and $z\in \partial
B_{\rho }\left( x,s\right),$ where
\begin{equation*}
0< r_{1}\leq r, s\leq r_{2}\leq 1.
\end{equation*}
\end{lemma}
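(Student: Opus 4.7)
The plan is to combine the local Harnack estimate (\ref{x3.1}) with two complementary chain arguments: a tangential Harnack on each $\rho$-sphere $\partial B_\rho(x,t)$ at scale $t\le 1$, and a radial Harnack along a $\rho$-ray emanating from $x$.

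For the tangential Harnack, I would establish a constant $C'$, depending only on $n,K,\delta,A$, so that $\sup_{\partial B_\rho(x,t)}G(x,\cdot)\le C'\inf_{\partial B_\rho(x,t)}G(x,\cdot)$ for every $t\le 1$. By Proposition \ref{E} (applied at a slightly smaller scale if necessary to stay in its hypothesis), a neighborhood of $\partial B_\rho(x,t)$ is comparable to the background ball $B(x,Ct/\sqrt{\rho(x)})$; on this ball, (\ref{O}) and the Ricci assumption give $\mathrm{Ric}\ge -c\rho(x)$, and since $c\rho(x)\cdot(t/\sqrt{\rho(x)})^2=ct^2\le c$, Bishop--Gromov comparison yields uniform volume doubling at this scale. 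A Vitali-type argument then covers the annular shell $\{w:t/2\le r_\rho(x,w)\le 2t\}$ by a uniformly bounded number $N_0$ of $\rho$-balls of radius $t/4$, each avoiding $x$. Chaining the sup and inf points through this cover and applying (\ref{x3.1}) at every step produces the desired ratio $C^{N_0}=:C'$.

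For the radial chain, since $M$ is $\rho$-complete by (\ref{P}) and, being nonparabolic in view of (\ref{WP}), noncompact, there is a unit-$\rho$-speed ray $\sigma:[0,\infty)\to M$ with $\sigma(0)=x$ and $r_\rho(x,\sigma(t))=t$. For $0<a\le b\le 1$, set $r_i=(5/4)^i a$; the points $\sigma(r_i)$ are at consecutive $\rho$-distance $\le r_i/4$, and the ball $B_\rho(\sigma(r_i),r_i/4)$ avoids $x$ (since $r_\rho(x,\sigma(r_i))=r_i$) and has radius $\le 1$, so (\ref{x3.1}) applies at every step. Iterating over $N\le\log_{5/4}(b/a)$ steps yields $G(x,\sigma(a))\le (b/a)^{C_1'}G(x,\sigma(b))$ with $C_1'=(\log C)/\log(5/4)$, and taking $b=\max(r,s)$, $a=\min(r,s)$ gives $G(x,\sigma(r))\le (r_2/r_1)^{C_1'}G(x,\sigma(s))$.

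Combining the three pieces, I obtain $G(x,y)\le \sup_{\partial B_\rho(x,r)}G(x,\cdot)\le C'\,G(x,\sigma(r))\le C'(r_2/r_1)^{C_1'}G(x,\sigma(s))\le C'(r_2/r_1)^{C_1'}\sup_{\partial B_\rho(x,s)}G(x,\cdot)\le (C')^2(r_2/r_1)^{C_1'}G(x,z)$, which is of the required form after renaming constants. The most delicate piece is the tangential Harnack: one must verify that the covering balls form a connected chain along which (\ref{x3.1}) propagates from the sup to the inf point. This reduces to path-connectedness of the annular shell within $M\setminus\{x\}$ (which holds as soon as $n\ge 2$) combined with the uniform Bishop--Gromov volume doubling at small scales prepared in Section \ref{Intro}; it is the step where the geometric comparison between the $\rho$-metric and the background metric really enters.
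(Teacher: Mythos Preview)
Your overall strategy—tangential Harnack on each $\rho$-sphere plus radial chaining along a ray—matches the paper's structure, and your radial step is essentially the paper's own argument. The gap is in the tangential step. You assert that the annular shell $\{t/2\le r_\rho(x,\cdot)\le 2t\}$ is path-connected ``as soon as $n\ge 2$,'' but this does not follow from dimension alone, nor from the hypotheses of the lemma. Annular regions around a point in a complete Riemannian manifold can be disconnected even under a uniform Ricci lower bound: place $x$ in the middle of a thin neck; once the radius exceeds the neck width the annulus splits into two pieces, one on each side. Volume doubling does bound the cardinality of your Vitali cover, but a bounded cover of a disconnected set yields no Harnack chain from the sup point to the inf point, so your constant $C'$ is not produced.

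The paper avoids this issue entirely by exploiting the \emph{symmetry} of the Green's function. For $y,z\in\partial B_\rho(x,r)$ with $r_\rho(y,z)\ge r/2$, let $\eta$ be a minimizing $\rho$-geodesic from $x$ to $z$. A short triangle-inequality argument shows $r_\rho(y,\eta(t))\ge r/4$ for every $t\in[0,r]$; hence $G(y,\cdot)$ is harmonic on each $B_\rho(\eta(t),r/4)$, and (\ref{x3}) gives $|\nabla_\rho\ln G(y,\cdot)|_\rho\le c/r$ along $\eta$. Integrating from $x$ to $z$ yields $G(y,x)\le C\,G(y,z)$, which by symmetry reads $G(x,y)\le C\,G(z,y)$. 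Running the same argument with $y$ and $z$ interchanged gives $G(z,y)\le C\,G(z,x)=C\,G(x,z)$, and combining the two produces $G(x,y)\le C^2\,G(x,z)$. The radial comparison between different radii $r$ and $s$ is then carried out exactly as you do, by integrating (\ref{x3}) along a minimizing $\rho$-geodesic. The symmetry trick is the missing idea: it replaces the problematic tangential covering by a single integration along a curve that is \emph{guaranteed} to stay at $\rho$-distance $\ge r/4$ from the pole, so no connectivity of the sphere or shell is ever needed.
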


\begin{proof}
Suppose first that both $y, z\in \partial B_{\rho }\left( x,r\right)$ for
some $0<r\leq 1.$ Since the estimate (\ref{x3.1}) implies 
\begin{equation*}
G\left( x,y\right) \leq C\,G\left( x,z\right) 
\end{equation*}%
for $z\in B_{\rho }\left( y,\frac{1}{2}r\right),$ it suffices to
prove (\ref{t2}) for $y$ and $z$ satisfying 
\begin{equation*}
r_{\rho }\left( y,z\right) \geq \frac{1}{2}\,r.
\end{equation*}
Let $\tau \left( t\right) $ and $\eta \left( t\right),$ $0\leq t \leq r,$
be minimizing $\rho$-geodesics from $x$ to $y$ and $z,$ respectively.  
We claim that $r_{\rho }\left( y,\eta \right) \geq \frac{1}{4}r$
and $r_{\rho }\left( z,\tau \right) \geq \frac{1}{4}r.$ Indeed, suppose 
$r_{\rho }\left( y,\eta\left( t_{0}\right) \right) <\frac{1}{4}r$ for some
$t_{0}\in \left( 0,r\right).$ Since $r_{\rho }\left(
x,y\right) =r_{\rho }\left( x,z\right) =r$ and $r_{\rho }\left( y,z\right)
\geq \frac{1}{2}r,$ the triangle inequality implies 

\begin{eqnarray*}
r_{\rho }\left( z,\eta \left( t_{0}\right) \right) &\geq &r_{\rho }\left(
y,z\right) -r_{\rho }\left( y,\eta \left( t_{0}\right) \right) \\
&>&\frac{1}{4}r
\end{eqnarray*}%
and 
\begin{eqnarray*}
r_{\rho }\left( x,\eta \left( t_{0}\right) \right) &\geq &r_{\rho }\left(
x,y\right) -r_{\rho }\left( y,\eta \left( t_{0}\right) \right) \\
&>&\frac{3}{4}r.
\end{eqnarray*}%
Adding up these two inequalities we get 
\begin{eqnarray*}
r_{\rho }\left( x,z\right) &=&r_{\rho }\left( x,\eta \left( t_{0}\right)
\right) +r_{\rho }\left( \eta \left( t_{0}\right) ,z\right) \\
&>&r.
\end{eqnarray*}%
This contradiction shows that $r_{\rho }\left( y,\eta \right) \geq \frac{1}{4%
}r$ as claimed. The proof of $r_{\rho }\left( z,\tau \right) \geq \frac{1}{4}%
r$ is similar.

Consequently, $u\left( q\right) =G\left( y,q\right) $ is harmonic on $%
B_{\rho }\left( \eta \left( t\right) ,\frac{1}{4}r\right) $ for all $t\in %
\left[ 0,r\right] .$ It follows from (\ref{x3}) that 
\begin{equation}
G\left( y,x\right) \leq C\,G\left( y,z\right) .  \label{t3}
\end{equation}%
Similarly, as $r_{\rho }\left( z,\tau \right) \geq \frac{1}{4}r,$ the
function $u\left( q\right) =G\left( z,q\right) $ is harmonic on $B_{\rho
}\left( \tau \left( t\right) ,\frac{1}{4}r\right) $ for all $t\in \left[ 0,r%
\right].$ By (\ref{x3}) we get 
\begin{equation}
G\left( z,y\right) \leq C\,\,G\left( z,x\right).  \label{t4}
\end{equation}%
Combining (\ref{t3}) with (\ref{t4}) we conclude that 
\begin{equation}
G\left( x,y\right) \leq C\,\,G\left( x,z\right)  \label{t5}
\end{equation}%
as claimed in (\ref{t2}). This proves (\ref{t2}) when both $y,z\in \partial
B_{\rho }\left( x,r\right).$

Now let $y\in \partial B_{\rho }\left( x,r\right) $ and $z\in \partial
B_{\rho }\left( x,s\right) $ with 
\begin{equation*}
0<r_{1}\leq r,s\leq r_{2}\leq 1.
\end{equation*}%
Let us assume first that $r<s.$ Let $\eta \left( t\right),$ $0\leq t\leq s,$
be a minimizing $\rho$-geodesic from $x$ to $z.$
Applying (\ref{t5}) to $y\in \partial B_{\rho }\left( x,r\right) $ and $%
\eta \left( r\right) \in \partial B_{\rho }\left( x,r\right),$ we get that 
\begin{equation}
G\left( x,y\right) \leq C\,\,G\left( x,\eta \left( r\right) \right).
\label{t6}
\end{equation}%
Note that the function $u\left( q\right) =G\left( x,q\right) $ is harmonic on $B_{\rho
}\left( \eta \left( t\right) ,t\right) $ for all $r\leq t\leq s.$ Hence, according to (\ref{x3}), 

\begin{equation}
\left\vert \nabla _{\rho }\ln G\right\vert _{\rho }\left( x,\eta \left(
t\right) \right) \leq \frac{c}{t}.  \label{t6.1}
\end{equation}%
Integrating (\ref{t6.1}) in $t$ from $r$ to $s$ implies that 
\begin{equation*}
G\left( x,\eta \left( r\right) \right) \leq c\left( \frac{s}{r}\right)
^{c}G\left( x,z\right).
\end{equation*}%
Together with (\ref{t6}) and the fact $\frac{s}{r}\leq \frac{r_{2}}{r_{1}},$ one concludes

\begin{equation*}
G\left( x,y\right) \leq c\left( \frac{r_{2}}{r_{1}}\right) ^{c}G\left(
x,z\right) 
\end{equation*}%
for any $y\in \partial B_{\rho }\left( x,r\right) $ and $z\in \partial
B_{\rho }\left( x,s\right).$ This proves the result in the case $r\leq s.$

The remaining case of $s<r$ is similar, using (\ref{t6.1}) along
a minimizing $\rho$-geodesic $\tau \left( t\right) $ joining $x$ and $y$ instead.
\end{proof}

We now establish a similar result for any radius.

\begin{lemma}
\label{H1} Let $\left( M^{n},g\right) $ be a complete manifold satisfying
the weighted Poincar\'{e} inequality (\ref{WP}) with weight $\rho $ having
properties (\ref{P}) and (\ref{O}). Assume that $\mathrm{Ric}\geq -K\rho $
on $M$ for some $K\geq 0.$ Then 

\begin{equation*}
G\left( x,y\right) \leq e^{C\,r}G\left( x,z\right) 
\end{equation*}%
for any $p\in M,$ $x\in B_{\rho }\left( p,r\right),$ and any $y, z\in
B_{\rho }\left( p,r\right) \backslash B_{\rho }\left( x,1\right).$
\end{lemma}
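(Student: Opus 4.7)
The plan is to prove Lemma \ref{H1} by a three-step chain that passes through the unit $\rho$-sphere $\partial B_\rho(x,1)$, reducing matters to Lemma \ref{H} together with the radial gradient estimate (\ref{x3}). First I observe that the statement is vacuous unless $r>1/2$: if $x\in B_\rho(p,r)$ and $y\in B_\rho(p,r)\setminus B_\rho(x,1)$, then
\[
1 \le r_\rho(x,y) \le r_\rho(x,p)+r_\rho(p,y)<2r,
\]
so I may assume $r\ge 1/2$ throughout, and in particular $r_y:=r_\rho(x,y)$ and $r_z:=r_\rho(x,z)$ both lie in $[1,2r)$.

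Since $ds_\rho^2$ is complete, I choose minimizing $\rho$-geodesics $\eta:[0,r_y]\to M$ and $\sigma:[0,r_z]\to M$ from $x$ to $y$ and $z$ respectively, and set $y':=\eta(1)$, $z':=\sigma(1)$, both on $\partial B_\rho(x,1)$. The first step is a radial comparison between $G(x,y)$ and $G(x,y')$: along $\eta|_{[1,r_y]}$ minimality gives $r_\rho(x,\eta(t))=t\ge 1$, so (\ref{x3}) with inner radius $1$ yields $|\nabla_\rho\ln G|_\rho(x,\eta(t))\le c$, and integrating in $t$ gives
\[
G(x,y)\le e^{c(r_y-1)}\,G(x,y')\le e^{2cr}\,G(x,y').
\]
The identical argument applied along $\sigma$ produces $G(x,z)\ge e^{-2cr}\,G(x,z')$.

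The second step compares the two points on the unit $\rho$-sphere: with $y',z'\in\partial B_\rho(x,1)$, Lemma \ref{H} applies with $r=s=r_1=r_2=1$, giving $G(x,y')\le C_1\,G(x,z')$. Chaining the three inequalities produces
\[
G(x,y)\le C_1\,e^{4cr}\,G(x,z),
\]
and since $C_1\le e^{2r\ln C_1}$ whenever $r\ge 1/2$, the leading constant is absorbed into the exponential, yielding the claimed bound with $C:=4c+2\ln C_1$.

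I do not anticipate any real obstacle; the argument is a straightforward iteration of the local results established in Lemma \ref{H} and (\ref{x3}). The only subtlety is that the inner radius in the radial gradient estimate must be fixed (here equal to $1$), so I need the chosen $\rho$-geodesic to remain in $M\setminus B_\rho(x,1)$ throughout the parameter interval $[1,r_y]$, which is precisely what minimality of $\eta$ guarantees.
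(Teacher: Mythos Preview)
Your proposal is correct and follows essentially the same approach as the paper's proof: both connect $y$ and $z$ to points $y',z'$ on $\partial B_\rho(x,1)$ via minimizing $\rho$-geodesics, apply the radial gradient bound (\ref{x3}) along these geodesics, and then invoke Lemma \ref{H} to compare $G(x,y')$ with $G(x,z')$. The only cosmetic differences are your explicit handling of the vacuous case $r\le 1/2$ and your absorption of the multiplicative constant into the exponential.
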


\begin{proof}
For $y,z\in B_{\rho }\left( p,r\right) \backslash B_{\rho }\left( x,1\right),$ 
let $\tau \left( t\right),$ $0\leq t\leq T_1,$ and $\eta (\bar{t}),$ $0\leq \bar{t}\leq T_2,$
be minimizing $\rho$-geodesics from $x$ to $y$ and from $x$ to $z,$ respectively. Since $%
y,z\in B_{\rho }\left( p,r\right) $ and $r_{\rho }\left( p,x\right) <r,$ the
triangle inequality implies that $T_1, T_2<2r.$

Let $y_{1}=\tau \left( 1\right) \in \partial B_{\rho }\left(
x,1\right) $ and $z_{1}=\eta \left( 1\right) \in \partial B_{\rho }\left(
x,1\right) $ be the intersection points of $\tau $ and $\eta $ with $\partial
B_{\rho }\left( x,1\right).$ By Lemma \ref{H} we have

\begin{equation}
G\left( x,y_{1}\right) \leq c\,G\left( x,z_{1}\right).  \label{x5}
\end{equation}%
On the other hand, by (\ref{x3}),

\begin{equation}
\left\vert \nabla _{\rho }\ln G\right\vert _{\rho }\left( x,\tau \left(
t\right) \right) \leq c  \label{x6}
\end{equation}%
for all $1\leq t.$ Integrating (\ref{x6}) in $t$ from $1$ to $T_1$
yields that 

\begin{equation*}
G\left( x,y\right) \leq e^{cr}G\left( x,y_{1}\right).
\end{equation*}%
Similarly, we have 
\begin{equation*}
G\left( x,z_{1}\right) \leq e^{c\,r}\,G\left( x,z\right).
\end{equation*}%
In view of (\ref{x5}) we conclude that 
\begin{equation*}
G\left( x,y\right) \leq e^{cr}\,G\left( x,z\right).
\end{equation*}%
This proves the lemma.
\end{proof}

To prove Theorem \ref{G2}, we will need to consider the level sets of
the Green's function. Denote by
\begin{eqnarray*}
l_{x}\left( t\right) &=&\left\{ y\in M:G\left( x,y\right) =t\right\} \\
L_{x}\left( \alpha ,\beta \right) &=&\left\{ y\in M:\alpha <G\left(
x,y\right) <\beta \right\}.
\end{eqnarray*}

We will make extensive use of the following lemma. For
a proof,  see lemma 3.3 in \cite{MSW}.

\begin{lemma}
\label{C}Let $\left( M^{n},\,g\right) $ be a Riemannian manifold satisfying (%
\ref{WP}) with weight $\rho $ having property (\ref{P}). For any $t>0$ we
have 
\begin{equation*}
\int_{l_{x}\left( t\right) }\left\vert \nabla G\right\vert \left( x,\xi
\right) dA\left( \xi \right) =1,
\end{equation*}%
where $dA$ is the Riemannian area form of $l_{x}\left( t\right).$ 
Furthermore, for any $0<\alpha <\beta $ we have 
\begin{equation*}
\int_{L_{x}\left( \alpha ,\beta \right) }G^{-1}\left( x,y\right) \left\vert
\nabla G\right\vert ^{2}\left( x,y\right) dy=\ln \frac{\beta }{\alpha }.
\end{equation*}
\end{lemma}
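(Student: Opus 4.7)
The plan is to derive both identities from the distributional equation $\Delta G(x,\cdot) = -\delta_{x}$ via the divergence theorem, with the co-area formula taking care of the second identity once the first is established.

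First I would verify that for every $t>0$ the super-level set $\Omega_{t} := \{y \in M : G(x,y) > t\}$ is relatively compact. The completeness of $ds_{\rho}^{2}$ in (\ref{P}) is crucial here: combined with the weighted Poincar\'e inequality, it yields (via Agmon-type arguments, or directly via the construction of $G$ as the monotone limit of Dirichlet Green's functions over a $\rho$-exhaustion) that $G(x,\cdot) \to 0$ at $\rho$-infinity, and the Hopf--Rinow theorem for the $\rho$-metric then guarantees $\rho$-bounded sets are compact in $M$. By Sard's theorem, almost every $t>0$ is a regular value of $G(x,\cdot)$, so $l_{x}(t) = \partial\Omega_{t}$ is a smooth compact hypersurface for such $t$.

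For a regular value $t$, I would apply the divergence theorem to $G(x,\cdot)$ on $\Omega_{t}\setminus \overline{B(x,\epsilon)}$ and let $\epsilon \to 0$. Since $G$ is harmonic on this set, only boundary terms contribute. On $l_{x}(t)$ the outward unit normal is $\nu = -\nabla G/|\nabla G|$ (because $G$ decreases as one leaves $\Omega_{t}$), producing a contribution of $-\int_{l_{x}(t)} |\nabla G|\, dA$. On $\partial B(x,\epsilon)$ the standard singular asymptotics of the Green's function near its pole (an $r^{2-n}$ singularity, or $\log$ in dimension $2$) give a boundary contribution that tends to $-1$ as $\epsilon \to 0$. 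Combining the two yields $\int_{l_{x}(t)} |\nabla G|\, dA = 1$ for almost every $t>0$. To promote this to every $t$, one approximates a given $t$ by regular values; since $|\nabla G|$ vanishes on the critical set, the integrand is well-behaved and the identity passes to the limit.

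The second identity then follows by the co-area formula:
\begin{equation*}
\int_{L_{x}(\alpha,\beta)} \frac{|\nabla G|^{2}}{G}\, dy = \int_{\alpha}^{\beta} \left( \int_{l_{x}(t)} \frac{|\nabla G|}{G}\, dA \right) dt = \int_{\alpha}^{\beta} \frac{1}{t}\int_{l_{x}(t)} |\nabla G|\, dA\, dt = \ln \frac{\beta}{\alpha},
\end{equation*}
using that $G \equiv t$ on $l_{x}(t)$ and invoking the first identity in the last step. The main obstacle is the compactness of the super-level sets in the absence of a constant spectral gap; once that is in hand the proof reduces to the textbook divergence-theorem-plus-co-area argument.
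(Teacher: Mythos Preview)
The paper does not give its own proof here; it simply refers to Lemma~3.3 of \cite{MSW}. Your outline---divergence theorem on $\{G>t\}\setminus \overline{B(x,\epsilon)}$ for the first identity, then co-area for the second---is exactly the standard argument and matches what is done there.

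One point deserves more care than you give it. Under only (\ref{WP}) and (\ref{P}), the claim that $G(x,\cdot)\to 0$ at $\rho$-infinity (and hence that super-level sets are compact) is not as immediate as your sentence suggests. The Agmon/Li--Wang machinery, namely the estimate (\ref{LW}), yields decay of $\int_{B_{\rho}(x,R+1)\setminus B_{\rho}(x,R)}\rho\, G^{2}$, i.e.\ $L^{2}$ decay in the weighted measure; converting this to a pointwise statement requires a mean-value or Harnack inequality, which in this paper enters only through the Ricci lower bound together with (\ref{O}) (see Lemma~\ref{CY}). The ``monotone limit of Dirichlet Green's functions'' also does not by itself force $G\to 0$ pointwise---that construction identifies $G$ as the \emph{minimal} positive Green's function, nothing more. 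In \cite{MSW} the standing hypotheses are $\lambda_{1}(\Delta)>0$ and a constant Ricci lower bound, under which compactness of super-level sets is available; in the present paper the lemma is invoked only when compactness is separately assumed (the derivation of (\ref{t7})) or circumvented by an extra $\rho$-distance cutoff (Lemma~\ref{IG}). So your diagnosis that compactness is ``the main obstacle'' is exactly right, but the justification you offer for it does not close under the bare hypotheses stated in the lemma.
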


A useful consequence of Lemma \ref{C} is that

\begin{equation}
\int_{L_{x}\left( \alpha ,\beta \right) }\rho \left( y\right) G\left(
x,y\right) dy\leq c\left( 1+\ln \frac{\beta }{\alpha }\right)   \label{t7}
\end{equation}
if the set $L_{x}\left( \frac{1}{e}\alpha, e\beta \right) $ is compact in $M.$
In fact, one only requires the weighted Poincar\'e inequality (\ref{WP})
to hold for smooth functions $\phi$ with support contained in 
$L_{x}\left( \frac{1}{e}\alpha, e\beta \right).$

Indeed, let $\phi $ be the cut-off function defined by 

\begin{equation*}
\phi \left( y\right) =\left\{ 
\begin{array}{c}
\ln \left( e\beta \right) -\ln G\,\left( x,y\right) \\ 
1 \\ 
\ln G\left( x,y\right) -\ln \left( \frac{1}{e}\alpha \right) \\ 
0%
\end{array}%
\right. 
\begin{array}{l}
\text{on }L_{x}\left( \beta ,e\beta \right) \\ 
\text{on }L_{x}\left( \alpha ,\beta \right) \\ 
\text{on }L_{x}\left( \frac{1}{e}\alpha ,\alpha \right) \\ 
\text{otherwise}%
\end{array}%
\end{equation*}%
Then the weighted Poincar\'e inequality (\ref{WP}) implies that

\begin{eqnarray}
\int_{M}\rho\left( y\right) \phi ^{2}\left( y\right) G\left(
x,y\right) dy &\leq &\int_{M}\left\vert \nabla \left( \phi G^{\frac{1}{2}%
}\right) \right\vert ^{2}\left( x,y\right) dy  \label{x9} \\
&\leq &\frac{1}{2}\int_{M}\phi ^{2}\left( y\right) \left\vert \nabla
G\right\vert ^{2}\left( x,y\right) G^{-1}\left( x,y\right) dy  \notag \\
&&+2\int_{M}G\left( x,y\right) \left\vert \nabla \phi \right\vert ^{2}\left(
y\right) dy  \notag
\end{eqnarray}%
Using the co-area formula and Lemma \ref{C}, we have 

\begin{eqnarray*}
&&\int_{M}\phi ^{2}\left( y\right) \left\vert \nabla G\right\vert ^{2}\left(
x,y\right) G^{-1}\left( x,y\right) dy \\
&\leq &\int_{L_{x}\left( \frac{1}{e}\alpha ,e\beta \right) }\left\vert
\nabla G\right\vert ^{2}\left( x,y\right) G^{-1}\left( x,y\right) dy \\
&=&2+\ln \left( \frac{\beta }{\alpha }\right).
\end{eqnarray*}%
The second term of the right hand side of (\ref{x9}) can be estimated as

\begin{eqnarray*}
&&\int_{M}G\left( x,y\right) \left\vert \nabla \phi \right\vert ^{2}\left(
y\right) dy \\
&\leq &\int_{L_{x}\left( \beta ,e\beta \right) }\left\vert \nabla
G\right\vert ^{2}\left( x,y\right) G^{-1}\left( x,y\right) dy \\
&&+\int_{L_{x}\left( \frac{1}{e}\alpha ,\alpha \right) }\left\vert \nabla
G\right\vert ^{2}\left( x,y\right) G^{-1}\left( x,y\right) dy \\
&=&2.
\end{eqnarray*}%
Combining these estimates we obtain 
\begin{equation*}
\int_{M} \rho\left( y\right) \phi ^{2}\left( y\right) G\left(
x,y\right) dy\leq c\left( 1+\ln \frac{\beta }{\alpha }\right) 
\end{equation*}%
as claimed in (\ref{t7}).

With a further cut-off, the assumption that the set $L_{x}\left( \frac{1}{e}\alpha ,e\beta \right) $ is
compact in $M$ is in fact not needed.

\begin{lemma}
\label{IG}Let $\left( M^{n},\,g\right) $ be a Riemannian manifold satisfying
(\ref{WP}) with weight $\rho $ having property (\ref{P}). Then
for all $0<\alpha <\beta ,$

\begin{equation*}
\int_{L_{x}\left( \alpha ,\beta \right) }\rho \left( y\right) G\left(
x,y\right) dy\leq c\left( 1+\ln \frac{\beta }{\alpha }\right).
\end{equation*}
\end{lemma}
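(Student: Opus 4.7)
The plan is to reduce Lemma \ref{IG} to the derivation of (\ref{t7}) by approximating the minimal positive Green's function $G$ with the Dirichlet Green's functions $G_{k}$ on a compact exhaustion $\{\Omega_{k}\}$ of $M$. This removes the only missing hypothesis in (\ref{t7}), namely compactness of $L_{x}(\tfrac{1}{e}\alpha,e\beta)$, since on each $\Omega_{k}$ the corresponding sublevel set of $G_{k}$ is automatically relatively compact by virtue of the Dirichlet boundary condition.

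Concretely, I would fix a smooth exhaustion $\Omega_{k}\Subset \Omega_{k+1}\nearrow M$ and let $G_{k}(x,\cdot)$ denote the Dirichlet Green's function on $\Omega_{k}$. Because $G_{k}$ vanishes on $\partial\Omega_{k}$, every superlevel set $\{y:G_{k}(x,y)\geq t\}$ with $t>0$ is compactly contained in $\Omega_{k}$, and hence
\[
L_{x,k}(a,b):=\{y:a<G_{k}(x,y)<b\}
\]
is relatively compact in $\Omega_{k}$ for all $0<a<b$. Define $\phi_{k}$ exactly as $\phi$ was defined in the derivation of (\ref{t7}), but with $G_{k}$ in place of $G$. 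Then $\phi_{k}G_{k}^{1/2}$ is a compactly supported Lipschitz function on $M$, hence an admissible test function in (\ref{WP}). Applying (\ref{WP}) to $\phi_{k}G_{k}^{1/2}$ and repeating the computation displayed in (\ref{x9}) verbatim, with $G_{k}$ everywhere in place of $G$, yields
\[
\int_{L_{x,k}(\alpha,\beta)}\rho(y)\,G_{k}(x,y)\,dy\;\leq\;c\left(1+\ln\tfrac{\beta}{\alpha}\right),
\]
with $c$ independent of $k$.

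Finally, I would let $k\to\infty$. Standard arguments give $G_{k}(x,\cdot)\uparrow G(x,\cdot)$ pointwise, so for any $y\in L_{x}(\alpha,\beta)$ one has $y\in L_{x,k}(\alpha,\beta)$ and $G_{k}(x,y)\to G(x,y)$ for all $k$ sufficiently large; for $y$ with $G(x,y)>\beta$ or $G(x,y)<\alpha$ the indicator $\mathbf{1}_{L_{x,k}(\alpha,\beta)}(y)$ is eventually zero. Therefore $\liminf_{k}\mathbf{1}_{L_{x,k}(\alpha,\beta)}\rho\,G_{k}\geq \mathbf{1}_{L_{x}(\alpha,\beta)}\rho\,G$ almost everywhere, and Fatou's lemma delivers the stated estimate. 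The one point that needs checking is the transfer of Lemma \ref{C} to $G_{k}$; but its proof rests entirely on applying the divergence theorem to the relatively compact superlevel sets $\{G_{k}>t\}\Subset \Omega_{k}$ together with harmonicity of $G_{k}$ off the singularity at $x$, so no boundary contribution from $\partial\Omega_{k}$ intervenes and the argument carries over unchanged. This is the main (and essentially only) obstacle in the plan.
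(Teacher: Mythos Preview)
Your argument is correct and is genuinely different from the paper's proof. The paper removes the compactness hypothesis in (\ref{t7}) not by passing to Dirichlet approximants, but by working directly with $G$ and introducing an additional radial cut-off $\psi$ in the $\rho$-distance: it applies (\ref{WP}) to the product $\chi\psi\,G^{1/2}$, where $\chi$ is the level-set cut-off from (\ref{t7}), and then bounds the extra term $\int G|\nabla\psi|^2\chi^2$ using the Li--Wang $L^2$ decay estimate (\ref{LW}) for the Green's function on $\rho$-annuli, which makes this term vanish as $R\to\infty$. Your route avoids invoking (\ref{LW}) altogether; by replacing $G$ with $G_k$ on $\Omega_k$ you force $L_{x,k}(\tfrac{1}{e}\alpha,e\beta)\subset\{G_k\ge \tfrac{1}{e}\alpha\}\Subset\Omega_k$, so the derivation of (\ref{t7}) and the flux identity of Lemma~\ref{C} apply verbatim to $G_k$ with no boundary contribution, and Fatou together with $G_k\uparrow G$ closes the argument. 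This is arguably more elementary and in fact uses only (\ref{WP}), not property (\ref{P}); the paper's approach, on the other hand, keeps all computations with the actual Green's function and reuses (\ref{LW}), which appears again later (e.g.\ in Theorem~\ref{V} and Theorem~\ref{DG}).
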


\begin{proof}
The argument is similar to that of (\ref{t7}), the main difference being
that we use an additional cut-off in distance 
\begin{equation*}
\psi \left( y\right) =\left\{ 
\begin{array}{c}
1 \\ 
R+1-r_{\rho }\left( x,y\right) \\ 
0%
\end{array}%
\right. 
\begin{array}{l}
\text{on }B_{\rho }\left( x,R\right) \\ 
\text{on }B_{\rho }\left( x,R+1\right) \backslash B_{\rho }\left( x,R\right)
\\ 
\text{on }M\backslash B_{\rho }\left( x,R+1\right)%
\end{array}%
\end{equation*}%
Now define $\phi =\chi \psi $, where $\chi $ is given by 
\begin{equation*}
\chi \left( y\right) =\left\{ 
\begin{array}{c}
\ln \left( e\beta \right) -\ln G\,\left( x,y\right) \\ 
1 \\ 
\ln G\left( x,y\right) -\ln \left( \frac{1}{e}\alpha \right) \\ 
0%
\end{array}%
\right. 
\begin{array}{l}
\text{on }L_{x}\left( \beta ,e\beta \right) \\ 
\text{on }L_{x}\left( \alpha ,\beta \right) \\ 
\text{on }L_{x}\left( \frac{1}{e}\alpha ,\alpha \right) \\ 
\text{otherwise}%
\end{array}%
\end{equation*}%
We have 
\begin{eqnarray*}
\int_{M}\rho \left( y\right) \phi ^{2}\left( y\right) G\left( x,y\right) dy
&\leq &\int_{M}\left\vert \nabla \left( \phi G^{\frac{1}{2}}\right)
\right\vert ^{2}\left( x,y\right) dy \\
&\leq &\frac{1}{2}\int_{M}\left\vert \nabla G\right\vert ^{2}\left(
x,y\right) G^{-1}\left( x,y\right) dy \\
&&+4\int_{M}G\left( x,y\right) \left\vert \nabla \chi \right\vert ^{2}\left(
y\right) dy \\
&&+4\int_{M}G\left( x,y\right) \left\vert \nabla \psi \right\vert ^{2}\left(
y\right) \chi ^{2}\left( y\right) dy.
\end{eqnarray*}%
Using Lemma \ref{C}, as in the proof of (\ref{t7}), we get 
\begin{eqnarray}
\int_{M}\rho \left( y\right) \phi ^{2}\left( y\right) G\left( x,y\right) dy
&\leq &c\left( 1+\ln \frac{\beta }{\alpha }\right)  \label{x10} \\
&&+4\int_{M}G\left( x,y\right) \left\vert \nabla \psi \right\vert ^{2}\left(
y\right) \chi ^{2}\left( y\right) dy.  \notag
\end{eqnarray}%
Note that $\left\vert \nabla \psi \right\vert ^{2}\left( y\right) =\rho
\left( y\right) $ on its support. Since $G>e^{-1}\alpha $ on the support of $%
\chi ,$ we get that%
\begin{equation*}
\int_{M}G\left( x,y\right) \left\vert \nabla \psi \right\vert ^{2}\left(
y\right) \chi ^{2}\left( y\right) dy\leq \frac{e}{\alpha }\int_{B_{\rho
}\left( x,R+1\right) \backslash B_{\rho }\left( x,R\right) }\rho \left(
y\right) G^{2}\left( x,y\right) dy.
\end{equation*}%
However, it follows from Corollary 2.2 in \cite{LW1} (cf. Theorem 2.5 in 
\cite{MSW}) that 
\begin{eqnarray}
&&\int_{B_{\rho }\left( x,R+1\right) \backslash B_{\rho }\left( x,R\right)
}\rho \left( y\right) G^{2}\left( x,y\right) dy  \label{LW} \\
&\leq &Ce^{-2R}\int_{B_{\rho }\left( x,2\right) \backslash B_{\rho }\left(
x,1\right) }\rho \left( y\right) G^{2}\left( x,y\right) dy.  \notag
\end{eqnarray}%
In conclusion, this implies%
\begin{eqnarray}
&&\int_{M}G\left( x,y\right) \left\vert \nabla \psi \right\vert ^{2}\left(
y\right) \chi ^{2}\left( y\right) dy  \label{x11} \\
&\leq &\frac{C}{\alpha }e^{-2R}\int_{B_{\rho }\left( x,2\right) \backslash
B_{\rho }\left( x,1\right) }\rho \left( y\right) G^{2}\left( x,y\right) dy. 
\notag
\end{eqnarray}%
Combining (\ref{x10}) and (\ref{x11}) we obtain%
\begin{eqnarray*}
\int_{L_{x}\left( \alpha ,\beta \right) \cap B_{\rho }\left( x,R\right)
}\rho \left( y\right) G\left( x,y\right) dy &\leq &\int_{M}\rho \left(
y\right) \phi ^{2}\left( y\right) G\left( x,y\right) dy \\
&\leq &c\left( 1+\ln \frac{\beta }{\alpha }\right) \\
&&+\frac{C}{\alpha }e^{-2R}\int_{B_{\rho }\left( x,2\right) \backslash
B_{\rho }\left( x,1\right) }\rho \left( y\right) G^{2}\left( x,y\right) dy.
\end{eqnarray*}%
The result follows by taking $R\rightarrow \infty $ above.
\end{proof}

With the preceding lemmas, we now conclude the following.

\begin{proposition}
\label{MB}Let $\left( M^{n},g\right) $ be a complete manifold satisfying the
weighted Poincar\'{e} inequality (\ref{WP}) with weight $\rho $ having
properties (\ref{P}) and (\ref{O}). Assume that $\mathrm{Ric}\geq -K\rho $
on $M$ for some $K\geq 0.$ Then

\begin{equation*}
\int_{B_{\rho }\left( p,r\right) \backslash B_{\rho }\left( x,1\right) }\rho
\left( y\right) G\left( x,y\right) dy\leq C\left( r+1\right)
\end{equation*}
for any $p\in M$ and $x\in B_{\rho }\left( p,r\right).$
\end{proposition}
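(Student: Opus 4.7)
The plan is to reduce the integral over the annular region $B_{\rho}(p,r)\setminus B_{\rho}(x,1)$ to a sublevel set integral of the Green's function, so that Lemma \ref{IG} applies. Since $x\notin B_{\rho}(p,r)\setminus B_{\rho}(x,1)$, the function $G(x,\cdot)$ is harmonic and finite on an open neighborhood of the closed annular region, and by the $\rho$-completeness assumption (\ref{P}) this closed set is compact. Hence $G(x,\cdot)$ attains a positive minimum $\alpha$ and a finite maximum $\beta$ there. By construction,
\begin{equation*}
B_{\rho}(p,r)\setminus B_{\rho}(x,1) \subset L_{x}(\alpha/2,\,2\beta),
\end{equation*}
and Lemma \ref{IG} immediately gives
\begin{equation*}
\int_{B_{\rho}(p,r)\setminus B_{\rho}(x,1)} \rho(y)\,G(x,y)\,dy \;\leq\; c\bigl(1+\ln(4\beta/\alpha)\bigr) \;\leq\; c\bigl(1+\ln(\beta/\alpha)\bigr) + c.
\end{equation*}

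Thus everything is reduced to controlling the oscillation ratio $\beta/\alpha$. Here Lemma \ref{H1} is tailor-made: if $y_{\beta},y_{\alpha}\in B_{\rho}(p,r)\setminus B_{\rho}(x,1)$ are points where the maximum and minimum are attained, then since both lie in $B_{\rho}(p,r)\setminus B_{\rho}(x,1)$ with $x\in B_{\rho}(p,r)$, the hypotheses of Lemma \ref{H1} are satisfied, and we get
\begin{equation*}
\beta = G(x,y_{\beta}) \leq e^{Cr}\,G(x,y_{\alpha}) = e^{Cr}\,\alpha.
\end{equation*}
Consequently $\ln(\beta/\alpha)\leq Cr$, and combining with the previous display yields the desired bound $C(r+1)$.

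I do not foresee a serious obstacle in this argument, since the technical machinery (the sublevel-set estimate in Lemma \ref{IG} and the large-scale Harnack inequality in Lemma \ref{H1}) has been set up precisely for this purpose. The only point requiring a modicum of care is the existence and finiteness of $\alpha$ and $\beta$: one needs $\rho$-completeness to ensure that the closed annular region is compact (so the extrema are attained) and one needs to stay positive distance from the pole $x$ of the Green's function (automatic since the region excludes $B_{\rho}(x,1)$). A minor edge case to mention is when $B_{\rho}(p,r)\subset B_{\rho}(x,1)$, in which case the annular region is empty and the inequality is vacuous.
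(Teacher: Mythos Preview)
Your proposal is correct and follows essentially the same approach as the paper: define $\alpha$ and $\beta$ as the infimum and supremum of $G(x,\cdot)$ on the annular region, invoke Lemma~\ref{IG} on the corresponding sublevel set, and then use the Harnack-type estimate Lemma~\ref{H1} to bound $\ln(\beta/\alpha)$ by $Cr$. Your extra care with the compactness argument and the edge case where the annulus is empty are welcome details, but the logical skeleton is identical to the paper's proof.
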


\begin{proof}
Let

\begin{equation*}
\alpha :=\inf_{y\in B_{\rho }\left( p,r\right) \backslash B_{\rho }\left(
x,1\right) }G\left( x,y\right) \text{ and }\beta :=\sup_{y\in B_{\rho
}\left( p,r\right) \backslash B_{\rho }\left( x,1\right) }G\left( x,y\right).
\end{equation*}
It follows from Lemma \ref{IG} that 

\begin{eqnarray*}
\int_{B_{\rho }\left( p,r\right) \backslash B_{\rho }\left( x,1\right)
}\rho(y)\,G\left( x,y\right) dy &\leq &\int_{L_{x}\left( \alpha ,\beta \right)
}\rho(y)\,G\left( x,y\right) dy \\
&\leq &c\left( \ln \frac{\beta }{\alpha }+1\right) .
\end{eqnarray*}%
However, Lemma \ref{H1} implies that 
\begin{equation*}
\beta \leq e^{c\,r}\alpha.
\end{equation*}%
The proposition follows.
\end{proof}

We now turn to the region around the pole and establish an integral estimate for the Green's function.

\begin{proposition}
\label{B}Let $\left( M^{n},g\right) $ be a complete manifold satisfying the
weighted Poincar\'{e} inequality (\ref{WP}) with weight $\rho $ having
properties (\ref{P}) and (\ref{O}). Assume that $\mathrm{Ric}\geq -K\rho $
on $M$ for some $K\geq 0.$ Then 

\begin{equation*}
\int_{B_{\rho }\left( x,1\right) }\rho \left( y\right) G\left( x,y\right)
dy\leq C
\end{equation*}
for all $x\in M.$
\end{proposition}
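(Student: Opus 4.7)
The plan is to combine the argument of Lemma~\ref{IG} with the local Poincar\'e inequality from Lemma~\ref{L}, which is applicable on small $\rho$-balls around $x$. Set $\alpha_0 := \sup_{\partial B_\rho(x,1)} G(x,\cdot)$. Since $G(x,\cdot)$ is positive harmonic on $M \setminus \{x\}$ and decays to zero at infinity, the maximum principle on $M \setminus \overline{B_\rho(x,1)}$ forces $\{G > \alpha_0\} \subset B_\rho(x,1)$, so I split
\begin{equation*}
\int_{B_\rho(x,1)} \rho\, G \,dy \;=\; \int_{B_\rho(x,1) \cap \{G \leq \alpha_0\}} \rho\, G \,dy \;+\; \int_{\{G > \alpha_0\}} \rho\, G \,dy.
\end{equation*}

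For the first integral, Lemma~\ref{H} applied to $\partial B_\rho(x,1)$ gives $G \geq \alpha_0/C$ there, and the minimum principle on $B_\rho(x,1) \setminus \{x\}$ propagates this bound to the interior. Hence $B_\rho(x,1) \cap \{G \leq \alpha_0\} \subset L_x(\alpha_0/(Ce),\, e\alpha_0)$, a level strip on which $G$ oscillates only by a bounded factor, and Lemma~\ref{IG} supplies an absolute constant bound.

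For the second integral, I decompose dyadically as $\{G > \alpha_0\} = \bigsqcup_{k \geq 0} L_x(e^k\alpha_0,\, e^{k+1}\alpha_0)$. The crucial geometric input is that Lemma~\ref{H} (with $r_1 = r$, $r_2 = 1$) yields $G(x,y) \leq C_1 r^{-C_1}\alpha_0$ for $y \in \partial B_\rho(x,r)$, which together with the maximum principle outside $B_\rho(x,1)$ forces the containment $\{G \geq e^k\alpha_0\} \subset B_\rho(x,R_k)$ for $R_k := C' e^{-k/C_1}$. For $k$ large enough that $R_{k-1} \leq \delta/(2C_0)$, Lemma~\ref{L} furnishes the local Poincar\'e constant $\lambda_1(B_\rho(x,R_{k-1})) \geq c\rho(x)/R_{k-1}^2$. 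Running the cutoff argument of Lemma~\ref{IG} with a log-cutoff $\chi_k$ supported in $L_x(e^{k-1}\alpha_0,\, e^{k+2}\alpha_0) \subset B_\rho(x,R_{k-1})$, but using this much stronger local Poincar\'e inequality in place of the global (\ref{WP}), and invoking $\rho \leq C\rho(x)$ on $B_\rho(x,1)$ from Proposition~\ref{E}, yields
\begin{equation*}
\int_{L_x(e^k\alpha_0,\, e^{k+1}\alpha_0)} \rho\, G \,dy \;\leq\; CR_{k-1}^2 \;=\; C'' e^{-2k/C_1}.
\end{equation*}
The finitely many initial pieces with $k < k_0$ (for which $R_{k-1}$ is too large to apply Lemma~\ref{L}) are each controlled by Lemma~\ref{IG} alone, giving at most an absolute constant per piece. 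Summing the geometric series for $k \geq k_0$ then gives the desired uniform bound.

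The main technical point is the localization $\{G \geq e^k\alpha_0\} \subset B_\rho(x,R_k)$ with $R_k$ decaying fast enough that the growth $R_{k-1}^{-2}$ of the local Poincar\'e constant outstrips the $e^k$ growth of the typical values of $G$ on the $k$-th level strip, producing the geometric decay $e^{-2k/C_1}$ that makes the series summable.
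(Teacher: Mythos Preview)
Your proposal is correct and follows essentially the same approach as the paper. Both arguments reduce to $\{G>\alpha_0\}$ (the paper uses $\sigma(x)=\inf_{\partial B_\rho(x,1)}G$ instead of your $\alpha_0=\sup$, but Lemma~\ref{H} makes these comparable), decompose dyadically in level sets of $G$, localize each level strip into a small $\rho$-ball using Lemma~\ref{H}, apply Lemma~\ref{L} for a local Poincar\'e inequality with constant growing like the inverse square of that radius, feed this into the (\ref{t7}) computation to obtain geometric decay, and handle the finitely many initial strips via Lemma~\ref{IG}. Your localization step $\{G\geq e^k\alpha_0\}\subset B_\rho(x,R_k)$ via Lemma~\ref{H} combined with (\ref{MG}) is in fact slightly more direct than the paper's inductive argument for (\ref{t12}), but the substance is the same.
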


\begin{proof}
Let
\begin{equation}
\sigma \left( x\right) :=\inf_{y\in \partial B_{\rho }\left( x,1\right)
}G\left( x,y\right).  \label{S}
\end{equation}
Then by the maximum principle,

\begin{equation}
B_{\rho }\left( x,1\right) \subset L_{x}\left( \sigma \left( x\right)
,\infty \right).  \label{t8}
\end{equation}%
Hence, it suffices to prove that 
\begin{equation*}
\int_{L_{x}\left( \sigma \left( x\right) ,\infty \right) }\rho \left(
y\right) G\left( x,y\right) dy\leq C.
\end{equation*}%

First, observe that 

\begin{equation}
\sup_{y\in M\backslash B_{\rho }\left( x,r\right) }G\left( x,y\right)
=\sup_{y\in \partial B_{\rho }\left( x,r\right) }G\left( x,y\right).
\label{MG}
\end{equation}%
Indeed, being the minimal positive Green's function, $G(x,y)$ is the limit of $G_i(x,y),$
the Dirichlet Green's function of compact exhaustion $\Omega _{i}\subset M.$
Obviously,  

\begin{equation*}
\sup_{y\in \Omega_i\backslash B_{\rho }\left( x,r\right) }G_i\left( x,y\right)
=\sup_{y\in \partial B_{\rho }\left( x,r\right) }G_i\left( x,y\right).
\end{equation*}
After letting $i\rightarrow \infty,$ one sees that
(\ref{MG}) holds true for $G\left( x,y\right).$ In particular, 

\begin{equation}
\sup_{y\in \partial B_{\rho }\left( x,r\right) }G\left( x,y\right) \text{ is
decreasing in }r>0\text{.}  \label{in}
\end{equation}

By Lemma \ref{H}, there exists $C_{1}>0$ so that 
\begin{equation}
G\left( x,y\right) \leq C_{1}\left( \frac{r_{2}}{r_{1}}\right)
^{C_{1}}\,G\left( x,z\right)  \label{t10}
\end{equation}%
for any $y\in \partial B_{\rho }\left( x,r\right) $ and $z\in \partial
B_{\rho }\left( x,s\right)$ for
\begin{equation*}
0<r_{1}\leq r, s\leq r_{2}\leq 1.
\end{equation*}%
Hence, 
\begin{equation*}
\sup_{y\in \partial B_{\rho }\left( x,1\right) }G\left( x,y\right) \leq
C_{1}\sigma \left( x\right).
\end{equation*}%
For $C_{1}$ in (\ref{t10}), let
\begin{equation}
\omega :=C_{1}4^{C_{1}}.  \label{w}
\end{equation}%
Setting $s=1$ and $r=\frac{1}{2}$ in (\ref{t10}) we see that 

\begin{eqnarray*}
\sup_{y\in B_{\rho }\left( x,1\right) \backslash B_{\rho }\left( x,\frac{1}{2%
}\right) }G\left( x,y\right) &\leq &C_{1}2^{C_{1}}\sigma \left( x\right) \\
&<&\omega \sigma \left( x\right).
\end{eqnarray*}%
Together with (\ref{MG}), this proves that 

\begin{equation}
l_{x}\left( \omega \sigma \left( x\right) \right) \subset B_{\rho }\left( x,%
\frac{1}{2}\right).  \label{t11}
\end{equation}%

We now prove by induction that
 
\begin{equation}
l_{x}\left( \omega ^{k}\sigma \left( x\right) \right) \subset B_{\rho
}\left( x,\frac{1}{2^{k}}\right)   \label{t12}
\end{equation}
for all $k\geq 1.$

Assume (\ref{t12}) holds for some $k\geq 1.$ If it does not hold for $k+1,$ then
there exists 

\begin{equation}
y\in l_{x}\left( \omega ^{k+1}\sigma \left( x\right) \right) \cap \left(
B_{\rho }\left( x,\frac{1}{2^{k}}\right) \backslash B_{\rho }\left( x,\frac{1%
}{2^{k+1}}\right) \right),  \label{t12.1}
\end{equation}%
that is, $y\in \partial B_{\rho }\left( x,r\right) $ for 
$\frac{1}{2^{k+1}}<r\leq \frac{1}{2^{k}}$ and 
$G\left( x,y\right) =\omega^{k+1}\sigma \left( x\right).$
Now (\ref{t10}) and (\ref{w}) imply that

\begin{eqnarray*}
G\left( x,y\right) &\leq &C_{1}2^{C_{1}}\,G\left( x,z\right) \\
&<&\omega G\left( x,z\right)
\end{eqnarray*}
or
\begin{equation*}
G(x,z)> \omega ^{k}\sigma \left( x\right)
\end{equation*}
for all $z\in B_{\rho }\left( x,\frac{1}{2^{k}}\right) \backslash B_{\rho }\left( x,\frac{1}{2^{k+1}}\right).$
Therefore, by the maximum principle,

\begin{eqnarray*}
\min_{y\in B_{\rho}\left( x,\frac{1}{2^{k}}\right)}G\left( x,y\right)&=&
\min_{y\in B_{\rho }\left( x,\frac{1}{2^{k}}\right) \backslash B_{\rho }\left( x,\frac{1}{2^{k+1}}\right)}
G\left( x,y\right) \\
&>& \omega ^{k}\sigma \left( x\right).
\end{eqnarray*}
This violates the induction hypothesis that 
$l_{x}\left( \omega ^{k}\sigma \left( x\right) \right) \subset B_{\rho
}\left( x,\frac{1}{2^{k}}\right).$ 
So (\ref{t12}) is true for any $k\geq 1.$
In particular, we conclude that 

\begin{equation}
L_{x}\left( \frac{1}{e}\omega ^{k}\sigma \left( x\right) ,e\omega
^{k+1}\sigma \left( x\right) \right) \subset B_{\rho }\left( x,\frac{1}{%
2^{k-1}}\right)   \label{t13}
\end{equation}
and the set $L_{x}\left( \frac{1}{e}\omega ^{k}\sigma \left( x\right) ,e\omega ^{k+1}\sigma \left( x\right)
\right) $ is compact in $M$ for all $k\geq 2.$

Let 
\begin{equation}
k_{0}=\left[ \frac{\ln \left( C_{0}/\delta \right) }{\ln 2}\right] +3.
\label{k0}
\end{equation}
Then, for all $k\geq k_{0},$ $\frac{1}{2^{k-1}}\leq \frac{\delta }{2C_{0}}$ and
Lemma \ref{L} implies that 

\begin{equation*}
\lambda _{1}\left( B_{\rho }\left( x,\frac{1}{2^{k-1}}\right) \right) \geq 
\frac{1}{C}2^{2k}\rho \left( x\right).
\end{equation*}%
From this and (\ref{t13}) we infer that the Poincar\'{e} inequality 

\begin{equation*}
\frac{1}{C}2^{2k}\rho \left( x\right) \int_{M}\phi ^{2}\leq \int_{M}|\nabla
\phi |^{2}
\end{equation*}
holds for any compactly supported function $\phi \in C_{0}^{\infty
}(L_{x}\left( \frac{1}{e}\omega ^{k}\sigma \left( x\right) ,e\omega
^{k+1}\sigma \left( x\right) \right) )$ and any $k\geq k_{0}.$
Thus, applying (\ref{t7}), we get

\begin{equation*}
\rho \left( x\right) \int_{L_{x}\left( \omega ^{k}\sigma \left( x\right)
,\omega ^{k+1}\sigma \left( x\right) \right) }G\left( x,y\right) dy\leq 
\frac{C}{2^{2k}}\ln \omega.
\end{equation*}%
In view of (\ref{t13}) and Proposition \ref{E}, it may be written into

\begin{equation}
\int_{L_{x}\left( \omega ^{k}\sigma \left( x\right) ,\omega ^{k+1}\sigma
\left( x\right) \right) }\rho \left( y\right) G\left( x,y\right) dy\leq 
\frac{C}{2^{2k}}.  \label{t14}
\end{equation}%
Summing (\ref{t14}) over all $k\geq k_{0},$ we obtain 

\begin{eqnarray}
&&\int_{L_{x}\left( \omega ^{k_{0}}\sigma \left( x\right) ,\infty \right)
}\rho \left( y\right) G\left( x,y\right) dy  \label{t14.1} \\
&=&\sum_{k=k_{0}}^{\infty }\int_{L_{x}\left( \omega ^{k}\sigma \left(
x\right) ,\omega ^{k+1}\sigma \left( x\right) \right) }\rho \left( y\right)
G\left( x,y\right) dy  \notag \\
&\leq &C.  \notag
\end{eqnarray}

Note that Lemma \ref{IG} implies

\begin{equation}
\int_{L_{x}\left( \sigma \left( x\right) ,\omega ^{k_{0}}\sigma \left(
x\right) \right) }\rho \left( y\right) G\left( x,y\right) dy\leq C.
\label{t14.2}
\end{equation}%
Combining (\ref{t14.1}) and (\ref{t14.2}) we conclude that 

\begin{eqnarray*}
\int_{L_{x}\left( \sigma \left( x\right) ,\infty \right) }\rho \left(
y\right) G\left( x,y\right) dy &=&\int_{L_{x}\left( \sigma \left( x\right)
,\omega ^{k_{0}}\sigma \left( x\right) \right) }\rho \left( y\right) G\left(
x,y\right) dy \\
&&+\int_{L_{x}\left( \omega ^{k_{0}}\sigma \left( x\right) ,\infty \right)
}\rho \left( y\right) G\left( x,y\right) dy \\
&\leq &C.
\end{eqnarray*}
This completes the proof.
\end{proof}

We are now able to prove the main result of this section.

\begin{theorem}
\label{G3}Let $\left( M^{n},g\right) $ be a complete manifold satisfying the
weighted Poincar\'{e} inequality (\ref{WP}) with weight $\rho $ having
properties (\ref{P}) and (\ref{O}). Assume that $\mathrm{Ric}\geq -K\rho $
on $M$ for some $K\geq 0.$ Then, for any $p, x\in M,$ and $r>0,$ 

\begin{equation*}
\int_{B_{\rho }\left( p,r\right) }\rho \left( y\right) G\left( x,y\right)
dy\leq C\left( r+1\right).
\end{equation*}
\end{theorem}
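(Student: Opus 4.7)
The plan is to mirror the decomposition already used in the introduction and write
$$\int_{B_{\rho }\left( p,r\right) }\rho (y)\,G(x,y)\,dy = I_1 + I_2,$$
where $I_1$ denotes the integral over $B_{\rho }\left( p,r\right) \cap B_{\rho }\left( x,1\right)$ and $I_2$ the integral over $B_{\rho }\left( p,r\right) \setminus B_{\rho }\left( x,1\right)$. Since $B_{\rho }\left( p,r\right) \cap B_{\rho }\left( x,1\right) \subset B_{\rho }\left( x,1\right),$ Proposition \ref{B} gives $I_1 \leq C$ at no extra cost. All the work goes into $I_2$.

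To estimate $I_2$ I would split into two cases depending on the $\rho$-distance from $x$ to $p$. In the \emph{near} case $r_{\rho }(x,p) \leq 2r+1,$ the point $x$ lies inside $B_{\rho }(p,2r+1),$ and since $B_{\rho }(p,r)\subset B_{\rho }(p,2r+1)$ we may apply Proposition \ref{MB} with the enlarged ball to obtain
$$I_2 \leq \int_{B_{\rho }(p,2r+1)\setminus B_{\rho }(x,1)} \rho(y)\,G(x,y)\,dy \leq C(2r+2) \leq C'(r+1).$$
This reduces the near case to the already-established Proposition \ref{MB}.

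In the \emph{far} case $r_{\rho }(x,p) > 2r+1,$ the set $B_{\rho }(p,r)$ is disjoint from $B_{\rho }(x,1),$ so $I_2 = \int_{B_{\rho }(p,r)} \rho(y)\,G(x,y)\,dy.$ Here I would bound the oscillation of $G(x,\cdot)$ over $B_{\rho }(p,r)$ by integrating the gradient estimate (\ref{x3}). For any $y,z\in B_{\rho }(p,r)$ and any minimizing $\rho$-geodesic $\gamma$ joining them, the triangle inequality (applied at both endpoints) shows that every intermediate point $q\in \gamma$ satisfies $r_{\rho }(p,q) \leq 2r,$ whence $r_{\rho }(x,q) > 1.$ Consequently $G(x,\cdot)$ is harmonic on $B_{\rho }(q,1)$ and (\ref{x3}) yields $|\nabla _{\rho }\ln G(x,\cdot)|_{\rho }\leq c$ along $\gamma.$ Integrating over $\gamma,$ whose $\rho$-length is at most $2r,$ gives $|\ln G(x,y) - \ln G(x,z)| \leq 2cr.$ Setting $\alpha = \inf _{B_{\rho }(p,r)} G(x,\cdot)$ and $\beta = \sup _{B_{\rho }(p,r)} G(x,\cdot),$ we obtain $\beta/\alpha \leq e^{2cr},$ and since $B_{\rho }(p,r) \subset L_{x}(\alpha,\beta)$ up to a measure-zero set, Lemma \ref{IG} gives
$$I_2 \leq \int_{L_{x}(\alpha,\beta)} \rho(y)\,G(x,y)\,dy \leq c\Bigl(1+\ln \tfrac{\beta }{\alpha }\Bigr) \leq C(r+1).$$

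The main obstacle I anticipate is the geometric bookkeeping in the far case, namely ensuring that a $\rho$-geodesic between arbitrary points of $B_{\rho }(p,r)$ remains at $\rho$-distance at least $1$ from the pole $x$ so that (\ref{x3}) can be applied uniformly along the whole path. This is exactly why the threshold $2r+1$ (rather than $r+1$) is needed in the case separation: a geodesic joining two points of $B_{\rho }(p,r)$ can wander up to $\rho$-distance $2r$ from $p,$ so the buffer of $1$ between $r_{\rho }(x,p)$ and $2r$ is what preserves harmonicity of $G(x,\cdot)$ along the geodesic.
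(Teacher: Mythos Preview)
Your argument is correct. The decomposition into $I_1$ and $I_2$, the use of Proposition \ref{B} for $I_1$, and the near-case application of Proposition \ref{MB} (with the enlarged ball $B_\rho(p,2r+1)$) are exactly what the paper does once it has reduced to $x\in B_\rho(p,r)$. Your geometric claim in the far case is also sound: for $q$ on a minimizing $\rho$-geodesic between $y,z\in B_\rho(p,r)$ one has $\min(r_\rho(y,q),r_\rho(q,z))\le \tfrac12 r_\rho(y,z)<r$, whence $r_\rho(p,q)<2r$ and $r_\rho(x,q)>1$, so (\ref{x3}) applies along the whole path and Lemma \ref{IG} finishes.

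The only genuine difference from the paper is how the case $x\notin B_\rho(p,r)$ is handled. The paper does not split into near/far cases; instead it observes that $\Phi(x)=\int_{B_\rho(p,r)}\rho(y)G(x,y)\,dy$ is harmonic in $x$ on $M\setminus B_\rho(p,r)$ (as a limit of Dirichlet Green's function integrals) and invokes the maximum principle to reduce everything to $x\in\partial B_\rho(p,r)\subset \overline{B_\rho(p,r)}$. This is a one-line reduction that avoids your far-case computation entirely. Your far-case argument, by contrast, essentially reruns the proof of Proposition \ref{MB} in a configuration where the pole sits outside the ball---same ingredients (gradient estimate, Lemma \ref{IG}), different geometry. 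Both routes are short; the paper's is slicker, while yours has the mild advantage of not needing the approximation of $G$ by Dirichlet Green's functions.
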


\begin{proof}
We first remark that it suffices to prove the result for $x\in B_{\rho
}\left( p,r\right).$ Indeed, consider the function 

\begin{equation*}
\Phi \left( x\right) =\int_{B_{p}\left( p,r\right) }\rho \left( y\right)
G\left( x,y\right) dy.
\end{equation*}
We claim that the maximum value of $\Phi$ on $M\backslash B_{\rho }\left( p,r\right)$ 
must occur on $\partial B_{\rho }\left( p,r\right).$ This is because
$G\left( x,y\right) $ is the limit of $G_{i}\left( x,y\right),$ the
Dirichlet Green's function of compact exhaustion $\Omega _{i}$ of $M.$
If we let

\begin{equation*}
\Phi _{i}\left( x\right) =\int_{B_{p}\left( p,r\right) }\rho \left( y\right)
G_{i}\left( x,y\right) dy,
\end{equation*}%
then $\Phi _{i}\rightarrow \Phi $ as $i\rightarrow \infty.$ However,
by the maximum principle, 
the maximum value of $\Phi _{i}\left( x\right) $ on $\Omega_i\setminus B_{\rho }\left( p,r\right)$ 
is achieved on $\partial B_{\rho }\left( p,r\right).$ Therefore, 
the same is true for $\Phi \left( x\right).$

From now on, we assume that $x\in B_{\rho }\left( p,r\right).$
By Proposition \ref{MB} and Proposition \ref{B}, 
\begin{equation*}
\int_{B_{\rho }\left( p,r\right) \backslash B_{\rho }\left( x,1\right) }\rho
\left( y\right) G\left( x,y\right) dy\leq C\left( r+1\right)
\end{equation*}
and 
\begin{equation*}
\int_{B_{\rho }\left( x,1\right) }\rho \left( y\right) G\left( x,y\right)
dy\leq C.
\end{equation*}
Obviously, the theorem follows by combining these two estimates.
\end{proof}

Let us point out that Theorem \ref{G3} is sharp.  Indeed, for any 
$\varepsilon >0$ small enough so that $B\left( x,\varepsilon \right) \subset
B_{\rho }\left( x,t\right),$ we have

\begin{eqnarray*}
0 &=&\int_{B_{\rho }\left( x,t\right) \backslash B\left( x,\varepsilon
\right) }\Delta _{y}G\left( x,y\right) dy \\
&=&\int_{\partial B_{\rho }\left( x,t\right) }\frac{\partial G}{\partial \nu 
}\left( x,\xi \right) dA\left( \xi \right) \\
&&-\int_{\partial B\left( x,\varepsilon \right) }\frac{\partial G}{\partial r%
}\left( x,\xi \right) dA\left( \xi \right),
\end{eqnarray*}%
where $\nu $ is the unit normal of $\partial
B_{\rho }\left( x,t\right) $ with respect to $ds^{2}.$ 
Using the asymptotics of $G$ near its pole, we obtain

\begin{equation*}
\int_{\partial B\left( x,\varepsilon \right) }\frac{\partial G}{\partial r}%
\left( x,\xi \right) dA\left( \xi \right) =-1
\end{equation*}%
for any $\varepsilon >0.$ So

\begin{eqnarray}
1 &=&-\int_{\partial B_{\rho }\left( x,t\right) }\frac{\partial G}{\partial
\nu }\left( x,\xi \right) dA\left( \xi \right)  \label{GA} \\
&\leq &\int_{\partial B_{\rho }\left( x,t\right) }\left\vert \nabla
G\right\vert \left( x,\xi \right) dA\left( \xi \right)   \notag
\end{eqnarray}%
for any $t>0.$ Combining with the gradient estimate in (\ref{x3}) that 
\begin{equation*}
\left\vert \nabla G\right\vert \left( x,y\right) \leq C\sqrt{\rho \left(
y\right) }G\left( x,y\right)
\end{equation*}%
for $y\in M\backslash B_{\rho }\left( x,1\right),$ where the gradient
is taken in variable $y,$ we conclude

\begin{equation*}
\int_{\partial B_{\rho }\left( x,t\right) }\sqrt{\rho \left( \xi \right) }%
G\left( x,\xi \right) dA\left( \xi \right) \geq \frac{1}{C}
\end{equation*}%
for all $t\geq 1.$

Now the co-area formula yields

\begin{eqnarray}
&&\int_{B_{\rho }\left( x,r\right) \backslash B_{\rho }\left( x,1\right)
}\rho \left( y\right) G\left( x,y\right) dy  \label{GA1} \\
&=&\int_{1}^{r}\int_{\partial B_{\rho }\left( x,t\right) }\frac{1}{%
\left\vert \nabla r_{\rho }\right\vert \left( x,\xi \right) }\rho \left( \xi
\right) G\left( x,\xi \right) dA\left( \xi \right) dt  \notag \\
&=&\int_{1}^{r}\int_{\partial B_{\rho }\left( x,t\right) }\sqrt{\rho \left(
\xi \right) }G\left( x,\xi \right) dA\left( \xi \right) dt  \notag \\
&\geq &\frac{1}{C}\left( r-1\right).  \notag
\end{eqnarray}%
This shows that 
\begin{equation*}
\int_{B_{\rho }\left( x,r\right) }\rho \left( y\right) G\left( x,y\right)
dy\geq \frac{1}{C}\left( r-1\right) 
\end{equation*}%
for all $r>1,$ confirming the sharpness of Theorem \ref{G3}.

The above estimate of the Green's function leads to a volume comparison
result for geodesic $\rho $-balls. Define 

\begin{equation*}
\mathcal{V}_{\rho }\left( x,r\right) =\int_{B_{\rho }\left( x,r\right) }\rho
\left( y\right) dy.
\end{equation*}

\begin{theorem}
\label{V}Let $\left( M^{n},g\right) $ be a complete manifold satisfying the
weighted Poincar\'{e} inequality (\ref{WP}) with weight $\rho $ having
properties (\ref{P}) and (\ref{O}). Assume that $\mathrm{Ric}\geq -K\rho $
on $M$ for some $K\geq 0.$ Then for all $x\in M,$
 
\begin{equation*}
c\,e^{2R}\mathcal{V}_{\rho}\left( x,1\right) \leq \mathcal{V}_{\rho }\left( x,R\right) \leq \frac{e^{C\left( R+1\right) }}{r^{C}}
\mathcal{V}_{\rho }\left( x,r\right)
\end{equation*}
for all $0<r\leq 1\leq R.$
\end{theorem}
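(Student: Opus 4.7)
The plan is to treat the upper and lower bounds separately, combining Theorem \ref{G3} with the Harnack estimate of Lemma \ref{H1} and the gradient--flux identity (\ref{GA}).

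For the upper bound, the polynomial factor $r^{-C}$ in the small-scale regime $0<r\leq 1$ comes from a direct Bishop--Gromov comparison on the background metric: by Proposition \ref{E} both $B_\rho(x,r)$ and $B_\rho(x,1)$ are sandwiched between background balls of radius $\sim r/\sqrt{\rho(x)}$ and $\sim 1/\sqrt{\rho(x)}$, on which $\mathrm{Ric}\geq -c\rho(x)$ (a constant) by (\ref{O}), and $\rho$ is comparable to $\rho(x)$ throughout $B_\rho(x,1)$. To pass to large scales $R\geq 1$, I combine Proposition \ref{MB} with the Harnack lower bound $G(x,y)\geq c\sigma(x)e^{-CR}$ on $B_\rho(x,R)\setminus B_\rho(x,1)$, where $\sigma(x):=\min_{\partial B_\rho(x,1)}G(x,\cdot)$. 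This produces $\sigma(x)(\mathcal{V}_\rho(x,R)-\mathcal{V}_\rho(x,1))\leq C(R+1)e^{CR}$, and the upper bound will follow once the estimate $\sigma(x)\geq c/\mathcal{V}_\rho(x,1)$ is established; the latter comes from the flux inequality $\int_{\partial B_\rho(x,1)}|\nabla G|\,dA\geq 1$ combined with the gradient estimate (\ref{x3}) and Bishop--Gromov area/volume comparison on the background ball $B(x,C_0/\sqrt{\rho(x)})$.

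For the lower bound, the strategy is a reverse Cauchy--Schwarz argument on the annulus $A_R:=B_\rho(x,R+1)\setminus B_\rho(x,R)$. The flux identity (\ref{GA}) together with the gradient estimate (\ref{x3}) yields $\int_{\partial B_\rho(x,t)}\sqrt{\rho}\,G\,dA\geq 1/C$ for $t\geq 1$, and integrating by co-area gives $\int_{A_R}\rho G\geq 1/C$. Cauchy--Schwarz then produces
\[
\mathcal{V}_\rho(A_R)\;\geq\;\frac{\bigl(\int_{A_R}\rho G\bigr)^{2}}{\int_{A_R}\rho G^{2}}\;\geq\;\frac{c}{\int_{A_R}\rho G^{2}}.
\]
The weighted $L^{2}$-decay estimate (\ref{LW}) from \cite{LW1} bounds $\int_{A_R}\rho G^{2}\leq Ce^{-2R}M_{0}$ with $M_{0}:=\int_{B_\rho(x,2)\setminus B_\rho(x,1)}\rho G^{2}$. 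Replacing $G$ by $C\sigma(x)$ on $B_\rho(x,2)\setminus B_\rho(x,1)$ via Harnack, and then using $\sigma(x)\mathcal{V}_\rho(x,1)\leq C$ (from Proposition \ref{B} and the maximum principle) together with the upper bound $\mathcal{V}_\rho(x,2)\leq e^{C}\mathcal{V}_\rho(x,1)$ proved above, one arrives at $M_{0}\leq C/\mathcal{V}_\rho(x,1)$. Combining, $\mathcal{V}_\rho(x,R+1)\geq \mathcal{V}_\rho(A_R)\geq ce^{2R}\mathcal{V}_\rho(x,1)$, which is the claimed lower bound after relabeling.

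The main obstacle is the bootstrap between the two halves: the lower bound needs the $R=2$ instance of the upper bound, while the upper bound needs the somewhat delicate estimate $\sigma(x)\gtrsim 1/\mathcal{V}_\rho(x,1)$, which itself ultimately rests on combining the flux identity with Bishop--Gromov area comparison at the background scale $\sim 1/\sqrt{\rho(x)}$. Once this scale-by-scale coupling is set up carefully, each remaining step is a direct consequence of results already developed in the preceding sections.
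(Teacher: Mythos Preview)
Your lower-bound argument is essentially identical to the paper's: flux identity plus gradient estimate gives $\int_{A_R}\rho G\geq c$, Cauchy--Schwarz against (\ref{LW}) yields $\mathcal{V}_\rho(x,R)\cdot M_0\geq c\,e^{2R}$, and then Harnack together with $\sigma(x)\mathcal{V}_\rho(x,1)\leq C$ and the $R=2$ case of the upper bound finishes. No issues there.

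For the upper bound your outline is workable but differs from the paper in one respect, and one step as you wrote it has a gap. You split the estimate into (i) a small-scale comparison $\mathcal{V}_\rho(x,1)\leq Cr^{-C}\mathcal{V}_\rho(x,r)$ via Bishop--Gromov on background balls, and (ii) $\sigma(x)\geq c/\mathcal{V}_\rho(x,1)$ via the flux identity at $t=1$ plus ``Bishop--Gromov area/volume comparison.'' The difficulty with (ii) is that after flux plus (\ref{x3}) plus Harnack you arrive at
\[
1\leq C\,\sigma(x)\int_{\partial B_\rho(x,1)}\sqrt{\rho}\,dA,
\]
and you then need $\int_{\partial B_\rho(x,1)}\sqrt{\rho}\,dA\leq C\,\mathcal{V}_\rho(x,1)$. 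But $\partial B_\rho(x,1)$ is a level set of $r_\rho$, not a geodesic sphere of the background metric, so Bishop--Gromov area comparison on $B(x,C_0/\sqrt{\rho(x)})$ says nothing about its area. The fix is not to work at a single radius: use the flux inequality for \emph{every} $t\in(0,1]$, combine with the gradient bound $|\nabla G|\leq (c/t)\sqrt{\rho}\,G$ from (\ref{x3}) and the scale-dependent Harnack $G\leq C t^{-C}\sigma(x)$ from Lemma~\ref{H}, and then integrate in $t$ via the co-area formula. This yields directly
\[
\mathcal{V}_\rho(x,r)\;\geq\;\frac{c\,r^{C}}{\sigma(x)}\qquad\text{for all }0<r\leq 1,
\]
which is exactly (\ref{vd6}) in the paper. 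This single inequality gives you \emph{both} (i) and (ii) at once, and avoids Bishop--Gromov entirely; combined with $\mathcal{V}_\rho(x,R)\leq e^{CR}/\sigma(x)$ (your step, same as the paper's (\ref{vd3})) it delivers the upper bound. So the paper's route is shorter: rather than patching together Bishop--Gromov at small scales with a separate $\sigma(x)$ bound, it lets the flux argument do all the small-scale work.
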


\begin{proof} 
We first prove the upper bound.
Theorem \ref{G3} implies that 

\begin{equation}
\int_{B_{\rho }\left( x,t\right) }\rho \left( y\right) G\left( x,y\right)
dy\leq C\left( t+1\right)   \label{t20}
\end{equation}%
for all $x\in M$ and $t>0.$ Set

\begin{eqnarray}
\sigma \left( x\right) &=&\inf_{y\in B_{\rho }\left( x,1\right) }G\left(
x,y\right)  \label{vd1} \\
&=&\inf_{y\in \partial B_{\rho }\left( x,1\right) }G\left( x,y\right).
\notag
\end{eqnarray}%
By Lemma \ref{H1},

\begin{equation*}
G\left( x,y\right) \geq e^{-cr_{\rho }\left( x,y\right) }\sigma \left(
x\right) 
\end{equation*}%
for $y\in M\backslash B_{\rho }\left( x,1\right).$ From (\ref{t20}) and
(\ref{vd1}) we conclude that 

\begin{eqnarray*}
C\left( t+1\right) &\geq &\int_{B_{\rho }\left( x,t\right) \backslash
B_{\rho }\left( x,t-1\right) }\rho \left( y\right) G\left( x,y\right) dy \\
&\geq &e^{-ct}\sigma \left( x\right) \int_{B_{\rho }\left( x,t\right)
\backslash B_{\rho }\left( x,t-1\right) }\rho \left( y\right) dy
\end{eqnarray*}%
for all $t\geq 1.$ Summing over $t$ from $1$ to $R,$ we get
 
\begin{equation}
\mathcal{V}_{\rho }\left( x,R\right) \leq \frac{e^{cR}}{\sigma \left(
x\right) } \label{vd3}
\end{equation}%
for all $x\in M$ and all $R\geq 1.$

On the other hand, according to (\ref{GA}), 

\begin{equation*}
\int_{\partial B_{\rho }\left( x,t\right) }\left\vert \nabla G\right\vert
\left( x,\xi \right) dA\left( \xi \right) \geq 1.
\end{equation*}%
In view of (\ref{x3}) we obtain for all $0<t\leq 1,$

\begin{equation}
1\leq \frac{C}{t}\int_{\partial B_{\rho }\left( x,t\right) }\sqrt{\rho
\left( \xi \right) }G\left( x,\xi \right) dA\left( \xi \right).  \label{vd4}
\end{equation}
Note Lemma \ref{H} implies for $0<t\leq 1,$

\begin{eqnarray*}
\sup_{y\in \partial B_{\rho }\left( x,t\right) }G\left( x,y\right) &\leq
&C\left( \frac{1}{t}\right) ^{C}\inf_{z\in \partial B_{\rho }\left(
x,1\right) }G\left( x,z\right) \\
&=&C\left( \frac{1}{t}\right) ^{C}\sigma \left( x\right).
\end{eqnarray*}%
Plugging into (\ref{vd4}) yields

\begin{equation}
\int_{\partial B_{\rho }\left( x,t\right) }\sqrt{\rho \left( \xi \right) }%
dA\left( \xi \right) \geq \frac{1}{C}\frac{t^{C}}{\sigma \left( x\right) }
\label{vd5}
\end{equation}%
for all $0<t\leq 1.$ 
So for any $0<r\leq 1,$ by the co-area formula,

\begin{eqnarray*}
&&\int_{B_{\rho }\left( x,r\right) \backslash B_{\rho }\left( x,\frac{r}{2}%
\right) }\rho \left( y\right) dy \\
&=&\int_{\frac{r}{2}}^{r}\int_{\partial B_{\rho }\left( x,t\right) }\frac{1}{%
\left\vert \nabla r_{\rho }\right\vert \left( x,\xi \right) }\rho \left( \xi
\right) dA\left( \xi \right) dt \\
&=&\int_{\frac{r}{2}}^{r}\int_{\partial B_{\rho }\left( x,t\right) }\sqrt{%
\rho \left( \xi \right) }dA\left( \xi \right) dt \\
&\geq &\frac{1}{C}r^{C}\frac{1}{\sigma \left( x\right) },
\end{eqnarray*}%
where in the last line we have used (\ref{vd5}). Thus, 
\begin{equation}
\frac{1}{\sigma \left( x\right) }\leq \frac{C}{r^{C}}\mathcal{V}_{\rho
}\left( x,r\right)   \label{vd6}
\end{equation}%
for all $r\leq 1.$

Combining (\ref{vd3}) and (\ref{vd6}) we conclude
\begin{equation*}
\mathcal{V}_{\rho }\left( x,R\right) \leq C\frac{e^{CR}}{r^{C}}\mathcal{V}%
_{\rho }\left( x,r\right) 
\end{equation*}%
for any $x\in M$ and $0<r\leq 1\leq R.$ This proves the upper bound.

We now turn to the lower bound. The same argument as in (\ref{GA1}) implies that 

\begin{equation*}
\frac{1}{C}\leq \int_{B_{\rho }\left( x,R\right) \backslash B_{\rho }\left(
x,R-1\right) }\rho \left( y\right) G\left( x,y\right) dy
\end{equation*}%
for $R>2.$ By the Cauchy-Schwarz inequality it
follows that 
\begin{equation*}
\frac{1}{C}\leq \mathcal{V}_{\rho }\left( x,R\right) \int_{B_{\rho }\left(
x,R\right) \backslash B_{\rho }\left( x,R-1\right) }\rho \left( y\right)
G^{2}\left( x,y\right) dy.
\end{equation*}%
Therefore, combining with (\ref{LW}), we obtain 
\begin{equation}
\frac{1}{C}e^{2R}\leq \mathcal{V}_{\rho }\left( x,R\right) \int_{B_{\rho
}\left( x,2\right) \backslash B_{\rho }\left( x,1\right) }\rho \left(
y\right) G^{2}\left( x,y\right) dy.  \label{lw2}
\end{equation}%
As in the proof of the upper bound, set 
\begin{equation*}
\sigma \left( x\right) =\inf_{y\in \partial B_{\rho }\left( x,1\right)
}G\left( x,y\right).
\end{equation*}%
Then Lemma \ref{H1} implies that

\begin{equation*}
\sup_{y\in B_{\rho }\left( x,2\right) \backslash B_{\rho }\left( x,1\right)
}G\left( x,y\right) \leq c\sigma \left( x\right).
\end{equation*}%
Hence, we obtain from (\ref{lw2}) that 
\begin{equation}
\frac{1}{C}e^{2R}\leq \sigma ^{2}\left( x\right) \mathcal{V}_{\rho }\left(
x,2\right) \mathcal{V}_{\rho }\left( x,R\right).  \label{lw3}
\end{equation}%
Applying (\ref{vd3}) for $R=1$ and using the upper bound we have 

\begin{equation}
\sigma ^{2}\left( x\right) \mathcal{V}_{\rho }\left( x,2\right) \leq \frac{C%
}{\mathcal{V}_{\rho }\left( x,1\right) }.  \label{lw4}
\end{equation}%
Clearly, (\ref{lw4}) and (\ref{lw3}) imply the lower bound.
\end{proof}

\section{\label{PP}The Poisson equation}

In this section, we focus on the Poisson equation and prove Theorem \ref{P2}.
We adopt the same convention that
$c$ and $C$ denote positive constants
depending on $n,$ $K,$ $\delta, $ and $A.$ We continue to
denote 
\begin{equation*}
r_{\rho }\left( x\right) =r_{\rho }\left( p,x\right).
\end{equation*}

\begin{theorem}
\label{P3}Let $\left( M^{n},g\right) $ be a complete manifold satisfying the
weighted Poincar\'{e} inequality (\ref{WP}) with weight $\rho $ having
properties (\ref{P}) and (\ref{O}). Assume that $\mathrm{Ric}\geq -K\rho $
on $M$ for some $K\geq 0.$ Then for any smooth function $\varphi $
satisfying 
\begin{equation*}
\left\vert \varphi \right\vert \left( x\right) \leq \omega \left( r_{\rho
}\left( x\right) \right),
\end{equation*}%
where $\omega \left( t\right) $ is a non-increasing function such that $%
\int_{0}^{\infty }\omega \left( t\right) dt<\infty,$ the Poisson equation $%
\Delta u=-\rho \varphi $ admits a bounded solution $u$ on $M$ with%
\begin{equation*}
\sup_{M}\left\vert u\right\vert \leq c\left( \omega \left( 0\right)
+\int_{0}^{\infty }\omega \left( t\right) dt\right).
\end{equation*}
\end{theorem}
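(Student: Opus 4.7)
Construct the solution by the Green's function representation
\begin{equation*}
u(x) := \int_M G(x,y)\, \rho(y)\, \varphi(y)\, dy,
\end{equation*}
and verify (a) absolute convergence with the claimed $L^\infty$ bound, and (b) that $u$ classically solves $\Delta u = -\rho\varphi$.

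For (a), decompose $M$ into $\rho$-annuli $A_k = B_\rho(p,k+1) \setminus B_\rho(p,k)$, $k \geq 0$. Set $a_k(x) := \int_{B_\rho(p,k)} \rho(y) G(x,y)\, dy$ with $a_0 = 0$; Theorem \ref{G3} yields $a_k(x) \leq C(k+1)$ uniformly in $x$. Since $\omega$ is non-increasing and $|\varphi(y)| \leq \omega(r_\rho(y)) \leq \omega(k)$ on $A_k$,
\begin{equation*}
|u(x)| \leq \sum_{k=0}^\infty \omega(k)\bigl(a_{k+1}(x) - a_k(x)\bigr).
\end{equation*}
Abel summation converts the right side to $\sum_{k\geq 1}(\omega(k-1)-\omega(k))\, a_k(x)$, modulo a boundary term $\omega(N)\, a_{N+1}(x)$ which tends to $0$ because a non-negative non-increasing integrable $\omega$ satisfies $k\omega(k) \to 0$ (from $(k/2)\omega(k) \leq \int_{k/2}^k \omega$, which in turn forces $\omega(\infty)=0$). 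A second Abel summation, together with the comparison $\sum_{k\geq 1}\omega(k) \leq \int_0^\infty \omega(t)\, dt$ (again by monotonicity), then produces the claimed bound
\begin{equation*}
|u(x)| \leq c\left(\omega(0) + \int_0^\infty \omega(t)\, dt\right).
\end{equation*}

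For (b), take a smooth compact exhaustion $\Omega_i \nearrow M$ with Dirichlet Green's functions $G_i \nearrow G$. Splitting $\varphi = \varphi^+ - \varphi^-$, the quantities $u_i^\pm(x) := \int_{\Omega_i} G_i(x,y)\rho(y)\varphi^\pm(y)\, dy$ are nondecreasing in $i$, satisfy the same uniform bound derived in (a), and converge pointwise by monotone convergence; hence $u_i := u_i^+ - u_i^- \to u$. Since each $u_i$ solves $\Delta u_i = -\rho\varphi$ on $\Omega_i$ with zero boundary data, interior Schauder estimates (using smoothness of $\rho\varphi$ together with the uniform $L^\infty$ bound on $\{u_i\}$) upgrade the convergence to $C^{2,\alpha}_{\mathrm{loc}}$, producing the desired classical bounded solution.

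The main obstacle I anticipate is the double Abel summation in (a): one must carefully balance the linear growth $a_k \leq C(k+1)$ against the decay of $\omega$ so that the final estimate depends only on $\omega(0)$ and $\int_0^\infty \omega(t)\, dt$, with the boundary term genuinely vanishing. Everything else, including the exhaustion argument in (b), is routine once the integral bound is in place.
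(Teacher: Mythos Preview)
Your proposal is correct and follows the same overall strategy as the paper: represent $u$ via the minimal Green's function, decompose $M$ into $\rho$-annuli centered at $p$, and invoke Theorem~\ref{G3} on each piece. The only notable difference is in the execution of part~(a). You use \emph{unit} annuli $B_\rho(p,k+1)\setminus B_\rho(p,k)$ and then need two Abel summations plus the auxiliary fact $k\,\omega(k)\to 0$ to absorb the linear growth of $a_k$. The paper instead uses a \emph{dyadic} decomposition $B_\rho(p,2^{j+1})\setminus B_\rho(p,2^j)$: Theorem~\ref{G3} gives $\int_{B_\rho(p,2^{j+1})}\rho\,G\le c\,2^{j+1}$, while $|\varphi|\le\omega(2^j)$ on the $j$-th shell, so the whole sum is bounded by $c\sum_j 2^{j-1}\omega(2^j)\le c\sum_j\int_{2^{j-1}}^{2^j}\omega(t)\,dt\le c\int_0^\infty\omega$, with an extra $c\,\omega(0)$ from the unit ball. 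This sidesteps Abel summation and the boundary-term argument entirely. Your part~(b) is actually more detailed than the paper, which simply asserts that $u(x)=\int_M G(x,y)\rho(y)\varphi(y)\,dy$ is well defined and solves $\Delta u=-\rho\varphi$ once the integral bound is in hand.
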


\begin{proof}
We first prove that 
\begin{equation}
\int_{M}\rho \left( y\right) G\left( x,y\right) \left\vert \varphi
\right\vert \left( y\right) dy\leq c\left( \omega \left( 0\right)
+\int_{0}^{\infty }\omega \left( t\right) dt\right)  \label{s6}
\end{equation}%
for all $x\in M.$ Note that by Theorem \ref{G3} we have 
\begin{eqnarray*}
\int_{B_{\rho }\left( p,1\right) }\rho \left( y\right) G\left( x,y\right)
\left\vert \varphi \right\vert \left( y\right) dy &\leq &c\sup_{B_{\rho
}\left( p,1\right) }\left\vert \varphi \right\vert \\
&\leq &c\,\omega \left( 0\right) 
\end{eqnarray*}%
as $\omega $ is non-increasing. Therefore,

\begin{eqnarray}
&&\int_{M}\rho \left( y\right) G\left( x,y\right) \left\vert \varphi
\right\vert \left( y\right) dy  \label{s7} \\
&=&\sum_{j=0}^{\infty }\int_{B_{\rho }\left( p,2^{j+1}\right) \backslash
B_{\rho }\left( p,2^{j}\right) }\rho \left( y\right) G\left( x,y\right)
\left\vert \varphi \right\vert \left( y\right) dy  \notag \\
&&+\int_{B_{\rho }\left( p,1\right) }\rho \left( y\right) G\left( x,y\right)
\left\vert \varphi \right\vert \left( y\right) dy  \notag \\
&\leq &\sum_{j=0}^{\infty }\left( \int_{B_{\rho }\left( p,2^{j+1}\right)
\backslash B_{\rho }\left( p,2^{j}\right) }\rho \left( y\right) G\left(
x,y\right) dy\right) \sup_{B_{\rho }\left( p,2^{j+1}\right) \backslash
B_{\rho }\left( p,2^{j}\right) }\left\vert \varphi \right\vert  \notag \\
&&+c\omega \left( 0\right) .  \notag
\end{eqnarray}%
The hypothesis on $\varphi $ implies

\begin{equation*}
\sup_{B_{\rho }\left( p,2^{j+1}\right) \backslash B_{\rho }\left(
p,2^{j}\right) }\left\vert \varphi \right\vert \leq \omega \left(
2^{j}\right)
\end{equation*}%
and Theorem \ref{G3} says that 
\begin{equation*}
\int_{B_{\rho }\left( p,2^{j+1}\right) \backslash B_{\rho }\left(
p,2^{j}\right) }\rho \left( y\right) G\left( x,y\right) dy\leq c\,2^{j-1}.
\end{equation*}%
Using these estimates in (\ref{s7}) we obtain 
\begin{eqnarray*}
\int_{M}\rho \left( y\right) G\left( x,y\right) \left\vert \varphi
\right\vert \left( y\right) dy &\leq &c\omega \left( 0\right)
+c\,\sum_{j=0}^{\infty }2^{j-1}\,\omega \left( 2^{j}\right) \\
&\leq &c\omega \left( 0\right) +c\sum_{j=0}^{\infty
}\int_{2^{j-1}}^{2^{j}}\omega \left( t\right) dt \\
&\leq &c\left( \omega \left( 0\right) +\int_{0}^{\infty }\omega \left(
t\right) dt\right) .
\end{eqnarray*}%
This proves (\ref{s6}). As $\int_{0}^{\infty }\omega \left( t\right)
dt<\infty ,$ it follows that the function 
\begin{equation*}
u\left( x\right) :=\int_{M}\rho \left( y\right) G\left( x,y\right) \varphi
\left( y\right) dy
\end{equation*}%
is well defined, bounded on $M,$ and verifies 
\begin{equation*}
\Delta u=-\rho \varphi .
\end{equation*}%
Furthermore, we have the estimate 
\begin{equation*}
\sup_{M}\left\vert u\right\vert \leq c\left( \omega \left( 0\right)
+\int_{0}^{\infty }\omega \left( t\right) dt\right) .
\end{equation*}%
This proves the theorem.
\end{proof}

Our next step is to prove that the solution $u$ in Theorem \ref{P3} decays to
zero at infinity by assuming a uniform lower bound on $\mathcal{V}_{\rho
}\left( x,1\right),$ that is, 
\begin{equation}
\mathcal{V}_{\rho }\left( x,1\right) =\int_{B_{\rho }\left( x,1\right) }\rho
\left( y\right) dy\geq v_{0}>0  \label{Mu2}
\end{equation}%
for all $x\in M.$

We first establish a pointwise decay estimate for the Green's function. For the
rest of the section, constants $c$ and $C$ may in addition 
depend on $v_{0}.$

\begin{theorem}
\label{DG}Let $\left( M^{n},g\right) $ be a complete manifold satisfying the
weighted Poincar\'{e} inequality (\ref{WP}) with weight $\rho $ having
properties (\ref{P}), (\ref{O}), and (\ref{Mu2}). Assume that $\mathrm{Ric}%
\geq -K\rho $ on $M$ for some $K\geq 0.$ Then we have 
\begin{equation*}
G\left( x,z\right) \leq Ce^{-r_{\rho }\left( x,z\right) }
\end{equation*}%
for $z\in M$ with $r_{\rho }\left( x,z\right) \geq 1.$
\end{theorem}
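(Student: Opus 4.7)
The plan is to upgrade the $L^{2}$ decay of the Green's function, as expressed by estimate (\ref{LW}) of \cite{LW1}, to pointwise decay using the local Harnack inequality on $\rho$-balls (Lemma \ref{CY}). Fix $z\in M$ with $r_{\rho}(x,z)=R\geq 1$ and set $B:=B_{\rho}\!\left(z,\frac{1}{2}\right)$. Since $x\notin B_{\rho}(z,1)$, applying Lemma \ref{CY} to the positive harmonic function $y\mapsto G(x,y)$ on $B_{\rho}(z,1)$ yields $G(x,z)\leq C\,G(x,y)$ for every $y\in B$. Squaring, multiplying by $\rho(y)$, and integrating over $B$ produces
$$G(x,z)^{2}\,\mathcal{V}_{\rho}\!\left(z,\frac{1}{2}\right)\leq C\int_{B}\rho(y)\,G(x,y)^{2}\,dy.$$
Since $B\subset B_{\rho}\!\left(x,R+\frac{1}{2}\right)\setminus B_{\rho}\!\left(x,R-\frac{1}{2}\right)$ by the triangle inequality, applying (\ref{LW}) at slightly shifted radii (the same move used in the proof of Theorem \ref{V}) gives the exponential gain
$$\int_{B}\rho(y)\,G(x,y)^{2}\,dy\leq Ce^{-2R}\int_{B_{\rho}(x,2)\setminus B_{\rho}(x,1)}\rho(y)\,G(x,y)^{2}\,dy.$$

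It then remains to show that the right-hand integral is uniformly bounded in $x$ and that $\mathcal{V}_{\rho}\!\left(z,\frac{1}{2}\right)$ has a uniform positive lower bound. The latter follows from the hypothesis (\ref{Mu2}) together with the upper volume comparison of Theorem \ref{V} applied with $R=1$ and $r=\frac{1}{2}$. For the former, Lemma \ref{H1} bounds $\sup_{B_{\rho}(x,2)\setminus B_{\rho}(x,1)}G(x,\cdot)\leq C\,\sigma(x)$, where $\sigma(x):=\inf_{\partial B_{\rho}(x,1)}G(x,\cdot)$; combining with (\ref{vd3}) from the proof of Theorem \ref{V} reduces matters to a uniform upper bound on $\sigma(x)$, via
$$\int_{B_{\rho}(x,2)\setminus B_{\rho}(x,1)}\rho\,G^{2}\,dy\leq C\,\sigma(x)^{2}\,\mathcal{V}_{\rho}(x,2)\leq C\,\sigma(x).$$

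The crux, and the main technical point I anticipate, is the uniform bound $\sigma(x)\leq C/v_{0}$. My plan is to derive it from the inclusion (\ref{t11}) in Proposition \ref{B}, namely $G(x,\cdot)\geq c\,\sigma(x)$ on $B_{\rho}\!\left(x,\frac{1}{2}\right)$: integrating this against $\rho$ on $B_{\rho}\!\left(x,\frac{1}{2}\right)$, comparing with $\int_{B_{\rho}(x,1)}\rho\,G\,dy\leq C$ from Theorem \ref{G3}, and using $\mathcal{V}_{\rho}\!\left(x,\frac{1}{2}\right)\geq c\,v_{0}$ once more. Assembling the inequalities above then yields $G(x,z)^{2}\leq Ce^{-2R}$, which is the asserted bound. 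Note that the volume hypothesis (\ref{Mu2}) is essential precisely at the two places where a uniform positive lower bound on $\rho$-ball measures is required.
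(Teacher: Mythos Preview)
Your proof is correct and follows essentially the same route as the paper: apply the $L^{2}$ exponential decay (\ref{LW}) on an annulus containing $z$, convert to a pointwise bound via the Harnack inequality on a $\rho$-ball about $z$ together with the volume lower bound (\ref{Mu2}), and control the reference integral near the pole using Theorem~\ref{G3}. The paper streamlines your last step by observing directly that $\sup_{B_{\rho}(x,3)\setminus B_{\rho}(x,1)}G(x,\cdot)\leq C\bigl(\int_{B_{\rho}(x,3)\setminus B_{\rho}(x,1)}\rho\bigr)^{-1}\leq C/v_{0}$ via Harnack on the annulus plus Theorem~\ref{G3}, which gives $\int\rho\,G^{2}\leq C/v_{0}$ immediately and avoids your detour through $\sigma(x)$ and (\ref{vd3}); note also that the lower bound $G(x,\cdot)\geq\sigma(x)$ on $B_{\rho}(x,1)$ you need is simply (\ref{t8}) (the maximum principle), not (\ref{t11}).
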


\begin{proof}
By (\ref{LW}),

\begin{eqnarray}
&&\int_{B_{\rho }\left( x,r+1\right) \backslash B_{\rho }\left( x,r-1\right)
}\rho \left( y\right) G^{2}\left( x,y\right) dy  \label{t21} \\
&\leq &Ce^{-2r}\int_{B_{\rho }\left( x,3\right) \backslash B_{\rho }\left(
x,1\right) }\rho \left( y\right) G^{2}\left( x,y\right) dy \notag
\end{eqnarray}%
for any $r\geq 4.$ To estimate the right hand side of (\ref{t21}),
by Lemma \ref{H1} we have 

\begin{equation}
\sup_{y\in B_{\rho }\left( x,3\right) \backslash B_{\rho }\left( x,1\right)
}G\left( x,y\right) \leq c\inf_{y\in B_{\rho }\left( x,3\right) \backslash
B_{\rho }\left( x,1\right) }G\left( x,y\right).  \label{t22}
\end{equation}%
Together with Theorem \ref{G3}, it implies that 
\begin{eqnarray*}
C &\geq &\int_{B_{\rho }\left( x,3\right) \backslash B_{\rho }\left(
x,1\right) }\rho \left( y\right) G\left( x,y\right) dy \\
&\geq &\frac{1}{c}\left( \sup_{y\in B_{\rho }\left( x,3\right) \backslash
B_{\rho }\left( x,1\right) }G\left( x,y\right) \right) \int_{B_{\rho }\left(
x,3\right) \backslash B_{\rho }\left( x,1\right) }\rho \left( y\right) dy.
\end{eqnarray*}%
Consequently, 

\begin{equation}
\sup_{y\in B_{\rho }\left( x,3\right) \backslash B_{\rho }\left( x,1\right)
}G\left( x,y\right) \leq C\left( \int_{B_{\rho }\left( x,3\right) \backslash
B_{\rho }\left( x,1\right) }\rho \left( y\right) dy\right) ^{-1}.
\label{t23}
\end{equation}%
By (\ref{t23}) and (\ref{t21}) we get 
\begin{equation*}
\int_{B_{\rho }\left( x,r+1\right) \backslash B_{\rho }\left( x,r-1\right)
}\rho \left( y\right) G^{2}\left( x,y\right) dy\leq Ce^{-2r}\left(
\int_{B_{\rho }\left( x,3\right) \backslash B_{\rho }\left( x,1\right) }\rho
\left( y\right) dy\right) ^{-1}.
\end{equation*}%
But the hypothesis (\ref{Mu2}) implies
\begin{equation*}
\left( \int_{B_{\rho }\left( x,3\right) \backslash B_{\rho }\left(
x,1\right) }\rho \left( y\right) dy\right) ^{-1}\leq \frac{1}{v_{0}}.
\end{equation*}%
Therefore, we conclude

\begin{equation}
\int_{B_{\rho }\left( x,r+1\right) \backslash B_{\rho }\left( x,r-1\right)
}\rho \left( y\right) G^{2}\left( x,y\right) dy\leq Ce^{-2r} \label{t24}
\end{equation}%
for any $r\geq 4.$

For $z\in \partial B_{\rho }\left( x,r\right)$ with $r\geq 4,$ since 
\begin{equation*}
B_{\rho }\left( z,1\right) \subset B_{\rho }\left( x,r+1\right) \backslash
B_{\rho }\left( x,r-1\right),
\end{equation*}
it follows that

\begin{equation}
\int_{B_{\rho }\left( z,1\right) }\rho \left( y\right) G^{2}\left(
x,y\right) dy\leq Ce^{-2r_{\rho }\left( x,z\right) }.  \label{t25}
\end{equation}%
Using (\ref{x3}) that

\begin{equation*}
\left\vert \nabla _{\rho }G\left( x,y\right) \right\vert _{\rho }\leq c
\end{equation*}%
for all $y\in B_{\rho }\left( z,1\right),$ we have

\begin{equation*}
G\left( x,z\right) \leq c\inf_{y\in B_{\rho }\left( z,1\right) }G\left(
x,y\right).
\end{equation*}%
Plugging into (\ref{t25}), together with the hypothesis that 
\begin{equation*}
\mathcal{V}_{\rho }\left( z,1\right) \geq v_{0}>0,
\end{equation*}
one concludes

\begin{equation*}
G\left( x,z\right) \leq Ce^{-r_{\rho }\left( x,z\right) }
\end{equation*}%
for $z\in M$ with $r_{\rho }\left( x,z\right) \geq 4.$ This proves the
result.
\end{proof}

We now establish the decay estimate of the solution $u$ to the Poisson equation.

\begin{theorem}
\label{P4}Let $\left( M^{n},g\right) $ be a complete manifold satisfying the
weighted Poincar\'{e} inequality (\ref{WP}) with weight $\rho $ having
properties (\ref{P}), (\ref{O}), and (\ref{Mu2}). Assume that $\mathrm{Ric}%
\geq -K\rho $ on $M$ for some $K\geq 0.$ Then for any function $\varphi $
satisfying 
\begin{equation*}
\left\vert \varphi \right\vert \left( x\right) \leq \omega \left( r_{\rho
}\left( x\right) \right),
\end{equation*}%
where $\omega \left( t\right) $ is a non-increasing function such that $%
\int_{0}^{\infty }\omega \left( t\right) dt<\infty,$ the Poisson equation $%
\Delta u=-\rho \varphi $ admits a bounded solution $u$ on $M$ such that%
\begin{equation}
\left\vert u\right\vert \left( x\right) \leq C\left( \int_{\alpha r_{\rho
}\left( x\right) }^{\infty }\omega \left( t\right) dt\,+\,\mathcal{V}_{\rho
}\left( p,1\right) \omega \left( 0\right) e^{-\frac{1}{2}r_{\rho }\left(
x\right) }\right)   \label{d}
\end{equation}%
for all $x\in M,$ where $\alpha $ is a constant depending only on $n,$ $K,$ and $\delta,$ $A.$ 
\end{theorem}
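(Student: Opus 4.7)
The plan is to represent the solution as
\begin{equation*}
u(x)=\int_{M}\rho(y)\,G(x,y)\,\varphi(y)\,dy,
\end{equation*}
which is the bounded solution produced in Theorem \ref{P3}, and then split the integration domain according to whether $y$ is close to the base point $p$ (where $\varphi$ may be as large as $\omega(0)$ but $G(x,y)$ is exponentially small) or far from $p$ (where $|\varphi|$ is small by the decay of $\omega$). Fix a constant $\alpha\in(0,\tfrac{1}{4})$ to be chosen at the end, and, assuming first that $r_{\rho}(x)$ is large enough that $(1-\alpha)r_{\rho}(x)\geq 1$, write
\begin{equation*}
|u|(x)\leq \int_{M\setminus B_{\rho}(p,\alpha r_{\rho}(x))}\rho\,G\,|\varphi|\,+\int_{B_{\rho}(p,\alpha r_{\rho}(x))}\rho\,G\,|\varphi|=:I_{1}+I_{2}.
\end{equation*}
For the remaining range $r_{\rho}(x)\leq (1-\alpha)^{-1}$ the estimate \eqref{d} follows immediately from Theorem \ref{P3} since both terms on the right-hand side are bounded below by a positive constant multiple of $\omega(0)+\int_{0}^{\infty}\omega$.

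For $I_{1}$, decompose $M\setminus B_{\rho}(p,\alpha r_{\rho}(x))$ into dyadic $\rho$-annuli $A_{j}=B_{\rho}(p,2^{j+1})\setminus B_{\rho}(p,2^{j})$ for $j\geq j_{0}$ with $2^{j_{0}}\sim \alpha r_{\rho}(x)$. On each $A_{j}$, the monotonicity of $\omega$ gives $|\varphi|\leq \omega(2^{j})$, while Theorem \ref{G3} yields
\begin{equation*}
\int_{A_{j}}\rho(y)\,G(x,y)\,dy\leq \int_{B_{\rho}(p,2^{j+1})}\rho(y)\,G(x,y)\,dy\leq C\,2^{j+1}.
\end{equation*}
Summing and using $2^{j}\omega(2^{j})\leq 2\int_{2^{j-1}}^{2^{j}}\omega(t)\,dt$, which holds because $\omega$ is non-increasing, produces
\begin{equation*}
I_{1}\leq C\sum_{j\geq j_{0}}2^{j}\omega(2^{j})\leq C\int_{2^{j_{0}-1}}^{\infty}\omega(t)\,dt\leq C\int_{\alpha r_{\rho}(x)/2}^{\infty}\omega(t)\,dt,
\end{equation*}
which, after renaming $\alpha$, is the first term in \eqref{d}.

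For $I_{2}$, observe that for $y\in B_{\rho}(p,\alpha r_{\rho}(x))$ the triangle inequality gives $r_{\rho}(x,y)\geq (1-\alpha)r_{\rho}(x)\geq 1$, so the pointwise decay of the Green's function from Theorem \ref{DG} yields $G(x,y)\leq C\,e^{-(1-\alpha)r_{\rho}(x)}$. Combining this with $|\varphi|\leq \omega(0)$ and the upper volume comparison from Theorem \ref{V_Intro},
\begin{equation*}
\mathcal{V}_{\rho}(p,\alpha r_{\rho}(x))\leq e^{c_{2}\alpha r_{\rho}(x)}\,\mathcal{V}_{\rho}(p,1),
\end{equation*}
we obtain
\begin{equation*}
I_{2}\leq C\,\omega(0)\,\mathcal{V}_{\rho}(p,1)\,e^{(c_{2}\alpha-(1-\alpha))\,r_{\rho}(x)}.
\end{equation*}
Choosing $\alpha$ so small that $c_{2}\alpha-(1-\alpha)\leq -\tfrac{1}{2}$ (which determines $\alpha$ in terms of $n,K,\delta,A$ only, since $c_{2}$ does) gives the second term in \eqref{d}.

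The main technical delicacy is the balancing of the two exponents in $I_{2}$: the exponential decay of $G(x,y)$ must defeat the exponential growth of $\mathcal{V}_{\rho}(p,\alpha r_{\rho}(x))$, and this is exactly what forces $\alpha$ to be small. Once $\alpha$ is fixed as above, the remaining estimates are routine applications of the monotonicity of $\omega$, the dyadic Green's function integral bound (Theorem \ref{G3}), the pointwise decay (Theorem \ref{DG}), and the volume comparison (Theorem \ref{V_Intro}). The case of small $r_{\rho}(x)$ is handled separately by invoking Theorem \ref{P3} directly, and the various constants produced there are absorbed into the statement of \eqref{d}.
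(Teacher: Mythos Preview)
Your proof is correct and follows essentially the same approach as the paper: the same split into $I_1$ and $I_2$, the same dyadic use of Theorem \ref{G3} for $I_1$, and the same combination of Theorem \ref{DG} with the volume upper bound for $I_2$. The only cosmetic difference is that for $I_2$ the paper decomposes $B_{\rho}(p,\alpha R)$ into unit $\rho$-annuli and sums the resulting geometric series, whereas you apply the Green's function and volume bounds globally on the whole ball; your version is slightly simpler and gives the same exponent once $\alpha$ is chosen small.
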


\begin{proof}
According to Theorem \ref{V}, there exists a constant $c_{1}>0$ so that 
\begin{equation}
\mathcal{V}_{\rho }\left( p,t\right) \leq e^{c_{1}t}\mathcal{V}_{\rho
}\left( p,1\right)   \label{cv1}
\end{equation}%
for all $t\geq 1.$ For $c_{1}$ specified in (\ref{cv1}), set 
\begin{equation*}
\alpha =\frac{1}{2\left( c_{1}+1\right) }.
\end{equation*}

For $x\in M,$  let
\begin{equation}
R=r_{\rho }\left( x\right).  \label{R}
\end{equation}
We may assume $R\geq 6$ as the theorem obviously is true for $R\leq 6$ by adjusting the constant $C.$

Similar to Theorem \ref{P3} we have 
\begin{eqnarray*}
&&\int_{M\backslash B_{\rho }\left( p,\alpha R\right) }\rho \left( y\right)
G\left( x,y\right) \left\vert \varphi \right\vert \left( y\right) dy \\
&=&\sum_{j=0}^{\infty }\int_{B_{\rho }\left( p,2^{j+1}\alpha R\right)
\backslash B_{\rho }\left( p,2^{j}\alpha R\right) }\rho \left( y\right)
G\left( x,y\right) \left\vert \varphi \right\vert \left( y\right) dy \\
&\leq &\sum_{j=0}^{\infty }\left( \int_{B_{\rho }\left( p,2^{j+1}\alpha
R\right) \backslash B\left( p,2^{j}\alpha R\right) }\rho \left( y\right)
G\left( x,y\right) dy\right) \sup_{B_{\rho }\left( p,2^{j+1}\alpha R\right)
\backslash B_{\rho }\left( p,2^{j}\alpha R\right) }\left\vert \varphi
\right\vert \\
&\leq &C\sum_{j=0}^{\infty }\left( 2^{j-1}\alpha R\right) \,\omega \left(
2^{j}\alpha R\right),
\end{eqnarray*}%
where in the last line we have used the decay hypothesis on $\varphi $ and
Theorem \ref{G3}.

Since $\omega \left( t\right) $ is nonincreasing, it is easy to see that 
\begin{eqnarray*}
\sum_{j=0}^{\infty }\left( 2^{j-1}\alpha R\right) \omega \left( 2^{j}\alpha
R\right) &\leq &\sum_{j=0}^{\infty }\int_{2^{j-1}\alpha R}^{2^{j}\alpha
R}\omega \left( t\right) dt \\
&\leq &\int_{\frac{1}{2}\alpha R}^{\infty }\omega \left( t\right) dt.
\end{eqnarray*}%
It follows that%
\begin{equation}
\int_{M\backslash B_{\rho }\left( p,\alpha R\right) }\rho \left( y\right)
G\left( x,y\right) \left\vert \varphi \right\vert \left( y\right) dy\leq
c\int_{\frac{1}{2}\alpha R}^{\infty }\omega \left( t\right) dt.  \label{s9}
\end{equation}

We now proceed to obtain an estimate on $B_{\rho }\left( p,\alpha R\right).$
For $y\in B_{\rho }\left( p,j+1\right),$ where $0<j+1\leq R-2,$ we get by
triangle inequality that 
\begin{eqnarray*}
r_{\rho }\left( x,y\right) &\geq &r_{\rho }\left( p,x\right) -r_{\rho
}\left( p,y\right) \\
&\geq &R-\left( j+1\right).
\end{eqnarray*}%
Hence, by Theorem \ref{DG}, 
\begin{equation*}
G\left( x,y\right) \leq ce^{-\left( R-j\right) }
\end{equation*}%
for all $y\in B_{\rho }\left( p,j+1\right),$ where $0<j+1\leq R-2.$

Furthermore, by (\ref{cv1}),
\begin{equation*}
\mathcal{V}_{\rho }\left( p,j+1\right) \leq e^{c_{1}\left( j+1\right) }%
\mathcal{V}_{\rho }\left( p,1\right) 
\end{equation*}%
for any $j\geq 0.$ Combining these estimates together, we get 
\begin{equation}
\int_{B_{\rho }\left( p,j+1\right) \backslash B_{\rho }\left( p,j\right)
}\rho \left( y\right) G(x,y)dy\leq ce^{-\left( R-\left( c_{1}+1\right)
j\right) }\mathcal{V}_{\rho }\left( p,1\right)   \label{s10}
\end{equation}%
for all $0\leq j\leq R-3.$

Since $\alpha R\leq R-3,$ by (\ref{s10}) it follows that 
\begin{eqnarray*}
&&\int_{B_{\rho }\left( p,\alpha R\right) }\rho \left( y\right)
G(x,y)\,\left\vert \varphi \right\vert \left( y\right) dy \\
&\leq &\sum_{j=0}^{\left[ \alpha R\right] }\int_{B_{\rho }\left(
p,j+1\right) \backslash B_{\rho }\left( p,j\right) }\rho \left( y\right)
G(x,y)\,\left\vert \varphi \right\vert \left( y\right) dy \\
&\leq &c\,\mathcal{V}_{\rho }\left( p,1\right) \sum_{j=0}^{\left[ \alpha R%
\right] }e^{\left( c_{1}+1\right) j-R}\sup_{B_{\rho }\left( p,j+1\right)
\backslash B_{\rho }\left( p,j\right) }\left\vert \varphi \right\vert \\
&\leq &c\mathcal{V}_{\rho }\left( p,1\right) \omega \left( 0\right)
e^{-R\left( 1-\left( c_{1}+1\right) \alpha \right) } \\
&=&c\mathcal{V}_{\rho }\left( p,1\right) \omega \left( 0\right) e^{-\frac{1}{%
2}R},
\end{eqnarray*}%
where in the last line we have used that $\alpha =\frac{1}{2}\frac{1}{c_{1}+1}.$
Combining with (\ref{s9}) we arrive at

\begin{equation*}
\int_{M}\rho \left( y\right) G(x,y)\,\left\vert \varphi \right\vert \left(
y\right) dy\leq c\int_{\frac{1}{2}\alpha R}^{\infty }\omega \left( t\right)
dt+c\mathcal{V}_{\rho }\left( p,1\right) \omega \left( 0\right) e^{-\frac{1}{%
2}R}.
\end{equation*}%
This proves the theorem.
\end{proof}

Let us note that Theorem \ref{P2} follows from Theorems \ref{P3} and \ref{P4}. 
Indeed, in the case that the function $\varphi $ decays as 
\begin{equation*}
\left\vert \varphi \right\vert \left( x\right) \leq c\left( 1+r_{\rho
}\left( x\right) \right) ^{-k}
\end{equation*}%
for some $k>1$ and 
\begin{equation*}
\mathcal{V}_{\rho }\left( x,1\right) \geq v_{0}>0
\end{equation*}%
holds for all $x\in M,$ Theorem \ref{P4} readily implies that the solution $u$ satisfies 

\begin{equation*}
\left\vert u\right\vert \left( x\right) \leq C\left( k\right) \left(
1+r\left( x\right) \right) ^{-k+1}
\end{equation*}
as claimed in Theorem \ref{P2}.

\section{Applications \label{CW}}

In this section, we discuss some applications of the Poisson equation
and prove Theorem \ref{Ends}. We
continue to assume that $\left( M,g\right) $ is a complete manifold
satisfying the weighted Poincar\'{e} inequality (\ref{WP}), together with (\ref{P})
and (\ref{O}). Furthermore, we assume that there exists $v_{0}>0$ such
that the weighted volume 

\begin{equation}
\mathcal{V}_{\rho }\left( x,1\right) =\int_{B_{\rho }\left( x,1\right) }\rho
\left( y\right) dy\geq v_{0}>0  \label{Mu1}
\end{equation}%
for all $x\in M.$ In the following, unless otherwise specified, the constants $c$ and $C$
depend only on $n, K, \delta, A$ and $v_{0}.$ 

We begin with a Liouville type result.

\begin{theorem}
\label{Decay}Let $\left( M^{n},g\right) $ be a complete manifold satisfying
the weighted Poincar\'{e} inequality (\ref{WP}) with weight $\rho $ having
properties (\ref{P}), (\ref{O}), and (\ref{Mu1}), and $\mathrm{Ric}\geq -K\rho$ 
for some constant $K\geq 0.$ Let $\eta \geq 0$ be a $C^{1}$ function satisfying
\begin{equation*}
\eta \Delta \eta \geq -\zeta \rho \eta ^{2}+\left\vert \nabla \eta
\right\vert ^{2}
\end{equation*}%
for some positive continuous function $\zeta \left( x\right)$ which converges to
zero at infinity. If there exist $\varepsilon >0$ and $\Lambda >0$ such that 

\begin{equation}
\eta \left( x\right) \leq \Lambda e^{-\varepsilon r_{\rho }\left( x\right) } \label{hd}
\end{equation}
on $M,$ then $\eta =0$ on $M.$
\end{theorem}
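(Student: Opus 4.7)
The plan is to combine the differential inequality for $\eta$ with the weighted Poincar\'e inequality \eqref{WP} to derive a Caccioppoli-type estimate, then bootstrap the decay of $\eta$ via exponentially weighted cut-offs, and finally force $\int \rho \eta^2 = 0$ by a boundary-term argument.

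\smallskip
\emph{Step 1 (Caccioppoli estimate).} First I would multiply $\eta\Delta\eta\geq -\zeta\rho\eta^2+|\nabla\eta|^2$ by $\chi^2$ for $\chi\in C_0^\infty(M)$ and integrate by parts to obtain the pointwise-integrated identity $2\int\chi\eta\,\nabla\chi\cdot\nabla\eta\leq\int\chi^2\zeta\rho\eta^2-2\int\chi^2|\nabla\eta|^2$. Substituting this bound on the cross-term into \eqref{WP} applied to $\chi\eta$ yields the fundamental inequality
\[
\int\chi^2|\nabla\eta|^2+\int(1-\zeta)\rho\,\chi^2\eta^2\leq\int\eta^2|\nabla\chi|^2.
\]

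\emph{Step 2 (exponential weight).} Next I would substitute $\chi=\psi e^{\alpha r_\rho}$ with $\alpha>0$ and $\psi$ a cut-off supported outside a large ball $B_\rho(p,R_0)$ on which $\zeta<\zeta_0$ (possible since $\zeta\to 0$ at infinity). Using $|\nabla r_\rho|^2=\rho$ and AM--GM to separate $|\nabla\chi|^2\leq(1+\tau^{-1})e^{2\alpha r_\rho}|\nabla\psi|^2+(1+\tau)\alpha^2 e^{2\alpha r_\rho}\psi^2\rho$, the estimate above becomes
\[
\bigl(1-\zeta_0-(1+\tau)\alpha^2\bigr)\int\rho\,\psi^2 e^{2\alpha r_\rho}\eta^2\leq(1+\tau^{-1})\int\eta^2 e^{2\alpha r_\rho}|\nabla\psi|^2,
\]
whose LHS has positive coefficient as long as $\alpha^2<(1-\zeta_0)/(1+\tau)$.

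\emph{Step 3 (L$^2$-integrability).} The hypothesis $\eta\leq\Lambda e^{-\varepsilon r_\rho}$, combined with the upper volume comparison in Theorem \ref{V_Intro}, lets me estimate the right-hand side when $\psi$ is chosen as an inner-outer cut-off whose outer scale $R$ tends to infinity. In the limit this produces $\int\rho\,e^{2\alpha r_\rho}\eta^2<\infty$ for $\alpha$ in a range determined by $\varepsilon$ and the volume growth exponent $c_2$.

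\emph{Step 4 (pointwise bootstrap).} Since $\Delta(\eta^2)\geq -2\zeta\rho\,\eta^2$, Moser iteration on $\rho$-balls—using the Sobolev inequality of Lemma \ref{L} together with the uniform lower bound $\mathcal{V}_\rho(x,1)\geq v_0$—upgrades the weighted $L^2$ control into the pointwise bound
\[
\eta(x)\leq C_\alpha\,e^{-\alpha r_\rho(x)}.
\]
I would then feed this improved decay back into Step 2 and iterate, progressively enlarging the admissible $\alpha$ toward the Agmon threshold $\alpha=1$ set by \eqref{WP}.

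\emph{Step 5 (vanishing).} With $\alpha$ arranged sufficiently close to $1$, I would return to Step 1 with an ordinary unweighted cut-off $\phi$ equal to $1$ on $B_\rho(p,R)$ and supported in $B_\rho(p,R+1)$. The outer annular term $\int_{B_\rho(p,R+1)\setminus B_\rho(p,R)}\rho\eta^2$ then tends to zero as $R\to\infty$, which forces $\int(1-\zeta)\rho\,\eta^2\leq 0$ and hence $\eta\equiv 0$, since $\zeta<1$ off a compact set (from $\zeta\to 0$) and $\eta$ cannot be nontrivially supported in a compact set compatible with the differential inequality.

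\smallskip
\emph{Main obstacle.} The hard part is Step 4: one must track the competition between the exponential decay rate $\alpha$, bounded above by the Agmon rate $1$ coming from \eqref{WP}, and the potentially rapid volume growth $e^{c_2 R}$ from Theorem \ref{V_Intro}, so that each round of Caccioppoli-plus-mean-value genuinely improves the exponent. The essential mechanism is that $\zeta\to 0$ at infinity lets the effective coefficient $1-\zeta_0-(1+\tau)\alpha^2$ in Step 2 remain positive for $\alpha$ arbitrarily close to $1$, providing just enough decay to make the final boundary argument of Step 5 close.
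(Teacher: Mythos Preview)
Your approach has a genuine gap that prevents it from closing, and it is structurally different from the paper's argument.

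The problem is quantitative: the Agmon-type bootstrap in Steps~2--4 can push the pointwise decay exponent $\alpha$ toward $1$ but never past it, since the coefficient $1-\zeta_0-(1+\tau)\alpha^2$ in Step~2 must stay positive and the weighted Poincar\'e inequality \eqref{WP} has weight exactly $\rho$. On the other hand, Step~5 needs the annular term $\int_{B_\rho(p,R+1)\setminus B_\rho(p,R)}\rho\,\eta^2$ to vanish as $R\to\infty$. With $\eta\le C e^{-\alpha r_\rho}$ and the $\rho$-volume upper bound $\mathcal V_\rho(p,R)\le e^{c_2 R}$ from Theorem~\ref{V_Intro}, this term is only controlled by $e^{(c_2-2\alpha)R}$, so you need $\alpha>c_2/2$. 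But the lower bound in the same theorem forces $c_2\ge 2$, and in general $c_2$ depends on $n,K,\delta,A$ with no reason to equal $2$. Thus $\alpha<1\le c_2/2$ and Step~5 cannot close. Worse, Step~3 already fails to launch: to get $\int\rho\,e^{2\alpha r_\rho}\eta^2<\infty$ from the pointwise hypothesis $\eta\le\Lambda e^{-\varepsilon r_\rho}$ and the volume bound, you need $\alpha<\varepsilon-c_2/2$, which is negative whenever $\varepsilon$ is small. So the iteration has no starting point. Your ``Main obstacle'' paragraph identifies the tension but does not resolve it; the mechanism you propose (taking $\zeta_0$ small to allow $\alpha$ near $1$) is exactly what is insufficient.

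The paper avoids this competition entirely by a logarithmic change of variable: setting $h=\eta/(\Lambda e)$ and $v=1/(-\ln h)$, one finds $\Delta v\ge -\zeta\rho\,v^2$ and $0\le v\le(1+\varepsilon r_\rho)^{-1}$. The point is that the right-hand side $\zeta v^2$ now decays \emph{polynomially} of order $2$, so it is integrable in the sense required by the Poisson solvability theorem (Theorem~\ref{P2}) regardless of the volume growth rate $c_2$. One then solves $\Delta u=-\zeta\rho v^2$ via the Green's function estimate of Theorem~\ref{G2}---which gives $\int_{B_\rho(p,r)}\rho\,G(x,y)\,dy\le C(r+1)$ uniformly, with no volume exponent entering---and compares $v\le u$ by the maximum principle. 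Because $\zeta\to 0$, the solution $u$ decays strictly faster than $v$ did, and an explicit doubling-type iteration drives $v$ down to $v\le Ce^{-r_\rho^a}$ for some $a>0$. This means $\eta$ decays faster than any exponential, and a final Poisson comparison applied to $f=\ln(-\ln h)$ produces the contradiction. The key idea you are missing is this substitution, which decouples the argument from the exponential volume growth and replaces energy estimates by the sharp Green's function integral bound.
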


\begin{proof}
We assume by contradiction that $\eta $ is not identically zero. We first
normalize $\eta $ by defining
 
\begin{equation}
h=\frac{1}{\Lambda e}\eta.  \label{k8}
\end{equation}
Then
\begin{equation*}
h\leq e^{-\varepsilon r_{\rho }-1}\ \text{\ on }M.
\end{equation*}%
As $h$ satisfies 
\begin{equation*}
\Delta h\geq -\zeta \rho h+\frac{\left\vert \nabla h\right\vert ^{2}}{h}
\end{equation*}%
at all points where $h>0,$ it is easy to see that 
\begin{equation}
\Delta \ln h\geq -\zeta \rho  \label{k7}
\end{equation}%
whenever $h>0.$ In addition, we have 
\begin{equation}
-\ln h\geq 1+\varepsilon r_{\rho }\text{ \ on }M.  \label{k9}
\end{equation}%
Denote by 
\begin{equation}
v=\frac{1}{\left( -\ln h\right) },  \label{k7.1}
\end{equation}%
where we set $v=0$ whenever $h=0.$ Hence, $v\in C^{0}\left( M\right).$

Computing directly, we have 
\begin{equation*}
\Delta v=\left( \Delta \ln h\right) v^{2}+2\left\vert \nabla \ln
h\right\vert ^{2}v^{3}.
\end{equation*}%
Hence, by (\ref{k7}) $v$ satisfies 
\begin{equation}
\Delta v\geq -\zeta \rho v^{2}  \label{k10}
\end{equation}%
whenever $v>0.$ Also, by (\ref{k9}),
 
\begin{equation}
0\leq v\leq \frac{1}{1+\varepsilon r_{\rho }}\text{ \ on }M.  \label{k11}
\end{equation}%
Define continuous function 
\begin{equation}
\varphi =\zeta v^{2}\text{ on }M  \label{k12}
\end{equation}%
and let
\begin{equation}
\omega \left( t\right) =\frac{1}{\left( 1+\varepsilon t\right) ^{2}}%
\sup_{M\backslash B_{\rho }\left( p,t\right) }\zeta.  \label{k13}
\end{equation}%
Clearly, $\omega $ is non-increasing and $\int_{0}^{\infty }\omega
\left( t\right) dt<\infty.$ Furthermore, (\ref{k11}) implies that 
\begin{equation*}
\left\vert \varphi \right\vert \left( x\right) \leq \omega \left( r_{\rho
}\left( x\right) \right) \text{ \ on }M.
\end{equation*}%
By Theorem \ref{P4}, the Poisson equation 
\begin{equation}
\Delta u=-\rho \varphi   \label{k14}
\end{equation}%
admits a bounded positive solution $u>0$ such that 
\begin{equation*}
0<u\left( x\right) \leq C\left( \int_{\alpha r_{\rho }\left( x\right)
}^{\infty }\omega \left( t\right) dt\,+\,\mathcal{V}_{\rho }\left(
p,1\right) \omega \left( 0\right) e^{-\frac{1}{2}r_{\rho }\left( x\right)
}\right) \text{ \ on }M
\end{equation*}%
for some $0<\alpha <1.$ Since $\phi $ is continuous, we have 
$u\in W_{\mathrm{loc}}^{2,p}\left( M\right) $ for any $p.$

By (\ref{k13}) we have that 
\begin{eqnarray*}
0 &<&u\left( x\right) \leq \frac{C}{1+\alpha \varepsilon r_{\rho }\left(
x\right) }\sup_{M\backslash B_{\rho }\left( p,\alpha r_{\rho }\left(
x\right) \right) }\zeta  \\
&&+\,C\mathcal{V}_{\rho }\left( p,1\right) e^{-\frac{1}{2}r_{\rho }\left(
x\right) }\sup_{M}\zeta.
\end{eqnarray*}%
As $\zeta \rightarrow 0$ at infinity we conclude that 
for any $\sigma >0$ there exists $R_{0}>0$ such that

\begin{equation}
u\left( x\right) \leq \frac{1}{\sigma r_{\rho }\left( x\right) }
\label{k15}
\end{equation}%
for all $x\in M\backslash B_{\rho }\left( p,R_{0}\right).$

We claim that 
\begin{equation}
v\leq u\text{ \ on }M.  \label{k16}
\end{equation}%
Suppose by contradiction that (\ref{k16}) is not true. Since by (\ref{k11})
and (\ref{k15}) both $u$ and $v$ approach $0$ at infinity, 
the function $v-u$ must achieve its maximum at some point 
$x_{0}\in M,$ where in particular $v\left( x_{0}\right) >0.$ Observe that by 
(\ref{k12}) and (\ref{k14}) we have $\Delta u=-\zeta \rho v^{2}$, whereas 
by (\ref{k10}) we have $\Delta v\geq -\zeta \rho v^{2}$ at any point where $v>0.$
Then $v-u\in W_{\mathrm{loc}}^{1,2}\left( M\right) $ is subharmonic in a
neighborhood of $x_{0}$ and achieves its maximum at $x_{0}.$ The strong maximum
principle implies that $v-u$ is in fact constant on $M.$ Obviously,
the constant must be $0.$ This contradiction implies that (\ref{k16}) is true.

In view of (\ref{k15}) and (\ref{k16}) we have proved that for any large $\sigma >0,$
there exists $R_{0}>0$ sufficiently large such that 
\begin{equation}
v\left( x\right) \leq \frac{1}{\sigma r_{\rho }\left( x\right) }\text{ \ for
all }x\in M\backslash B_{\rho }\left( p,R_{0}\right).  \label{k17}
\end{equation}

We now follow the proof of Theorem 4.4 in 
\cite{MSW} and show that $v$ decays faster than any polynomial order 
in the $\rho $-distance. This will be done by iterating the previous argument.

First, let us note the following fact. Define 
\begin{equation*}
\left\vert \zeta \right\vert _{\infty }:=\sup_{M}\zeta.
\end{equation*}%
Then (\ref{k10}) implies that 
\begin{equation}
\Delta v\geq -\left\vert \zeta \right\vert _{\infty }\rho v^{2}  \label{k18}
\end{equation}%
whenever $v>0.$ Assume that 
\begin{equation*}
v\left( x\right) \leq \theta \left( r_{\rho }\left( x\right) \right) 
\end{equation*}%
for some decreasing function $\theta \left( t\right) $ such that $%
\int_{0}^{\infty }\theta ^{2}\left( t\right) dt<\infty.$ Then there exists 
$0<\alpha <1$ and $\Upsilon >0,$ independent of $v$ or $\theta,$ 
such that 
\begin{equation}
v\left( x\right) \leq \Upsilon \left( \int_{\alpha r_{\rho }\left( x\right)
}^{\infty }\theta ^{2}\left( t\right) dt+e^{-\frac{1}{2}r_{\rho }\left(
x\right) }\theta ^{2}\left( 0\right) \right)   \label{k20}
\end{equation}%
for all $x\in M.$

Indeed, (\ref{k20}) follows in the same manner as (\ref{k17}).
Define the continuous
function 
\begin{equation*}
\varphi \left( x\right) =\left\vert \zeta \right\vert _{\infty }v^{2}
\end{equation*}%
and note that 
\begin{equation*}
0\leq \varphi \left( x\right) \leq \omega \left( r_{\rho }\left( x\right)
\right),
\end{equation*}%
where 
\begin{equation*}
\omega \left( t\right) =\left\vert \zeta \right\vert _{\infty }\theta
^{2}\left( t\right).
\end{equation*}%
By Theorem \ref{P4}, there exists a bounded solution $u\in W_{\mathrm{loc}}^{2,p}\left( M\right) $ of 
\begin{eqnarray}
\Delta u &=&-\rho \varphi  \label{k21} \\
&=&-\left\vert \zeta \right\vert _{\infty }\rho v^{2}  \notag
\end{eqnarray}%
such that 
\begin{equation*}
0<u\left( x\right) \leq C\left( \int_{\alpha r_{\rho }\left( x\right)
}^{\infty }\omega \left( t\right) dt\,+\,\mathcal{V}_{\rho }\left(
p,1\right) \omega \left( 0\right) e^{-\frac{1}{2}r_{\rho }\left( x\right)
}\right) \text{ \ on } M
\end{equation*}%
for some $0<\alpha <1.$ Using that $\omega \left( t\right) =\left\vert \zeta
\right\vert _{\infty }\theta ^{2}\left( t\right) $ and taking 
\begin{equation*}
\Upsilon :=C\left\vert \zeta \right\vert _{\infty }\max \left\{ 1,\mathcal{V}%
_{\rho }\left( p,1\right) \right\},
\end{equation*}%
we have 
\begin{equation*}
0<u\left( x\right) \leq \Upsilon \left( \int_{\alpha r_{\rho }\left(
x\right) }^{\infty }\theta ^{2}\left( t\right) dt+e^{-\frac{1}{2}r_{\rho
}\left( x\right) }\theta ^{2}\left( 0\right) \right) \text{\ on } M.
\end{equation*}%
By (\ref{k18}) and (\ref{k21}) the function $v-u\in W_{\mathrm{loc}%
}^{1,2}\left( M\right) $ is subharmonic and converges to zero at infinity.
Using the maximum principle we obtain $v\leq u$ on $M$, thus proving (\ref%
{k20}).

Fix $b>0$ small enough, depending only on $\alpha $ and $\Upsilon $ in (\ref%
{k20}), to be specified later. Note that by (\ref{k17}), there exists $%
B_{0}>0$ so that 
\begin{equation}
v\left( x\right) \leq \frac{b^{6}}{\alpha ^{2}r_{\rho }\left( x\right) +1}%
+B_{0}^{2}e^{-\alpha ^{2}r_{\rho }\left( x\right) }\text{ \ on }M.
\label{k21.1}
\end{equation}%
We prove by induction on $m\geq 2$ that 
\begin{equation}
v\left( x\right) \leq \frac{b^{2^{m}+m}}{\alpha ^{m}r_{\rho }\left( x\right)
+1}+B^{2^{m}-m}e^{-\alpha ^{m}r_{\rho }\left( x\right) }\text{ \ on }M,
\label{k22}
\end{equation}%
where $B$ is a large enough constant depending only on $\alpha,$ $\Upsilon $
and $B_{0}.$

Clearly, (\ref{k22}) holds for $m=2$ from (\ref{k21.1}). We now assume (\ref%
{k22}) holds for $m\geq 2$ and prove 
\begin{equation}
v\left( x\right) \leq \frac{b^{2^{m+1}+\left( m+1\right) }}{\alpha
^{m+1}r_{\rho }\left( x\right) +1}+B^{2^{m+1}-\left( m+1\right) }e^{-\alpha
^{m+1}r_{\rho }\left( x\right) }\text{ \ on }M.  \label{k23}
\end{equation}%
By the induction hypothesis we have $v\left( x\right) \leq \theta \left(
r_{\rho }\left( x\right) \right) ,$ where 
\begin{equation*}
\theta \left( t\right) :=\frac{b^{2^{m}+m}}{\alpha ^{m}t+1}%
+B^{2^{m}-m}e^{-\alpha ^{m}t}.
\end{equation*}%
By (\ref{k20}) we obtain that 
\begin{equation}
v\left( x\right) \leq \Upsilon \left( \int_{\alpha r_{\rho }\left( x\right)
}^{\infty }\theta ^{2}\left( t\right) dt+e^{-\frac{1}{2}r_{\rho }\left(
x\right) }\theta ^{2}\left( 0\right) \right) .  \label{k24}
\end{equation}%
Obviously,

\begin{equation}
\theta ^{2}\left( t\right) \leq \frac{2b^{2^{m+1}+2m}}{\left( \alpha
^{m}t+1\right) ^{2}}+2B^{2^{m+1}-2m}e^{-2\alpha ^{m}t}.  \label{k25}
\end{equation}%
It follows that%
\begin{eqnarray}
\int_{\alpha r_{\rho }\left( x\right) }^{\infty }\theta ^{2}\left( t\right)
dt &\leq &\frac{2}{\alpha ^{m}}\frac{b^{2^{m+1}+2m}}{\alpha ^{m+1}r_{\rho
}\left( x\right) +1}  \label{k26} \\
&&+\frac{1}{\alpha ^{m}}B^{2^{m+1}-2m}e^{-\alpha ^{m+1}r_{\rho }\left(
x\right) }.  \notag
\end{eqnarray}%
Furthermore, we have by (\ref{k25}) that 
\begin{eqnarray}
e^{-\frac{1}{2}r_{\rho }\left( x\right) }\theta ^{2}\left( 0\right) &\leq
&2\left( b^{2^{m+1}+2m}+B^{2^{m+1}-2m}\right) e^{-\frac{1}{2}r_{\rho }\left(
x\right) }  \label{k27} \\
&\leq &\frac{1}{\alpha ^{m}}B^{2^{m+1}-2m}e^{-\alpha ^{m+1}r_{\rho }\left(
x\right) }.  \notag
\end{eqnarray}%
Plugging (\ref{k26}) and (\ref{k27}) into (\ref{k24}) yields%
\begin{eqnarray}
v\left( x\right) &\leq &\frac{2\Upsilon }{\alpha ^{m}}\frac{b^{2^{m+1}+2m}}{%
\alpha ^{m+1}r_{\rho }\left( x\right) +1}+\frac{2\Upsilon }{\alpha ^{m}}%
B^{2^{m+1}-2m}e^{-\alpha ^{m+1}r_{\rho }\left( x\right) }  \label{k28} \\
&=&\left( \frac{2\Upsilon }{\alpha ^{2}}b\right) \left( \frac{b}{\alpha }%
\right) ^{m-2}\frac{b^{2^{m+1}+\left( m+1\right) }}{\alpha ^{m+1}r_{\rho
}\left( x\right) +1}  \notag \\
&&+\left( \frac{2\Upsilon }{\alpha ^{2}B}\right) \left( \frac{1}{\alpha B}%
\right) ^{m-2}B^{2^{m+1}-\left( m+1\right) }e^{-\alpha ^{m+1}r_{\rho }\left(
x\right) }.  \notag
\end{eqnarray}%
Now take $b$ sufficiently small so that $\frac{b}{\alpha }\leq 1$ and $\frac{%
2\Upsilon }{\alpha ^{2}}b\leq 1,$ and $B$ sufficiently large so that $\frac{1%
}{\alpha B}\leq 1$ and $\frac{2\Upsilon }{\alpha ^{2}B}\leq 1.$ Since $m\geq
2$, it follows by (\ref{k28}) that 
\begin{equation*}
v\left( x\right) \leq \frac{b^{2^{m+1}+\left( m+1\right) }}{\alpha
^{m+1}r_{\rho }\left( x\right) +1}+B^{2^{m+1}-\left( m+1\right) }e^{-\alpha
^{m+1}r_{\rho }\left( x\right) }.
\end{equation*}%
This proves (\ref{k23}). Hence,

\begin{equation}
v\left( x\right) \leq \frac{b^{2^{m}+m}}{\alpha ^{m}r_{\rho }\left( x\right)
+1}+B^{2^{m}-m}e^{-\alpha ^{m}r_{\rho }\left( x\right) }  \label{k29}
\end{equation}%
for all $m\geq 2.$ 

For $x\in M$ with $r_{\rho }\left( x\right)$ large, apply (\ref{k29}) by setting

\begin{equation*}
m:=\left[ \frac{\ln r_{\rho }\left( x\right) }{2\ln \left( 2\alpha
^{-1}\right) }\right],
\end{equation*}%
where $\left[ \cdot \right] $ denotes the greatest integer function. It is
not difficult to conclude that there exists constant $a>0$ such that 

\begin{equation}
v\left( x\right) \leq Ce^{-r_{\rho }^{a}\left( x\right) }\text{ \ on }M.
\label{k30}
\end{equation}

We now complete the proof of the theorem. By (\ref{k7.1}) we have that 
\begin{equation}
-\ln h\geq \frac{1}{C}e^{r_{\rho }^{a}\left( x\right) }\text{ \ on }M
\label{k31}
\end{equation}%
and satisfies 
\begin{equation*}
\Delta \left( -\ln h\right) \leq \zeta \rho.
\end{equation*}%
Consider the function 
\begin{equation*}
f\left( x\right) =\ln \left( -\ln h\right).
\end{equation*}%
Then it satisfies 
\begin{equation}
\Delta f\leq \frac{\zeta \rho }{\left( -\ln h\right) }  \label{k32}
\end{equation}%
whenever $h>0.$ Moreover, from (\ref{k31}), $f$ is bounded below by 
\begin{equation}
f\left( x\right) \geq r_{\rho }^{a}\left( x\right) -C\text{ \ on }M.
\label{k33}
\end{equation}%
Define 
\begin{equation*}
\varphi \left( x\right) =\frac{\zeta }{\left( -\ln h\right) },
\end{equation*}%
where $\varphi $ is continuously extended as $\varphi =0$ at points where $%
h=0.$ By Theorem \ref{P4} and (\ref{k31}) we can solve the Poisson equation 
\begin{eqnarray}
\Delta u &=&-\rho \varphi  \label{k34} \\
&=&-\frac{\zeta \rho }{\left( -\ln h\right) }  \notag
\end{eqnarray}%
and obtain a solution $u\in W_{\mathrm{loc}}^{2,p}\left( M\right) $ that
decays to zero at infinity.

According to (\ref{k33}), the function $f+u$ achieves its minimum at some
point $x_{0}\in M.$ Then $h\left( x_{0}\right) >0.$ So by (\ref{k32}) and (%
\ref{k34}), $f+u\in W_{\mathrm{loc}}^{1,2}\left( M\right) $ satisfies 
\begin{equation*}
\Delta \left( f+u\right) \leq 0
\end{equation*}%
in a neighborhood of $x_{0}.$ By the maximum principle, this implies that $f+u$
is constant, which is a contradiction.

Hence $h,$ as well as $\eta,$ must be identically zero on $M.$
\end{proof}

Let us point out that the hypothesis (\ref{hd}) on $\eta $ is necessary and
optimal. Indeed, consider 
\begin{equation*}
\eta \left( x\right) =e^{-\ln ^{a}\left( \left\vert x\right\vert
^{2}+e\right) }\text{ on }\mathbb{R}^{n}\text{,}
\end{equation*}%
where $0<a<1$ is fixed. It can be checked directly that 
\begin{eqnarray*}
\Delta \eta -\frac{\left\vert \nabla \eta \right\vert ^{2}}{\eta } &=&\left(
-\Delta \ln ^{a}\left( \left\vert x\right\vert ^{2}+e\right) \right) \eta \\
&\geq &-a\frac{\Delta \left\vert x\right\vert ^{2}}{\left( \left\vert
x\right\vert ^{2}+e\right) \ln ^{1-a}\left( \left\vert x\right\vert
^{2}+e\right) }\eta \\
&=&-\frac{2na}{\left( \left\vert x\right\vert ^{2}+e\right) \ln ^{1-a}\left(
\left\vert x\right\vert ^{2}+e\right) }\eta .
\end{eqnarray*}%
Now $\mathbb{R}^{n}$ satisfies weighted Poincar\'{e} inequality with
weight $\rho(x)=\frac{\left( n-2\right) ^{2}}{4}\frac{1}{\left\vert x\right\vert ^{2}%
}.$  So $\eta $ satisfies 
\begin{equation*}
\Delta \eta \geq -\zeta \rho \eta +\frac{\left\vert \nabla \eta \right\vert
^{2}}{\eta }
\end{equation*}
with
\begin{equation*}
\zeta \left( x\right) =\frac{c\left( n,a\right) }{\left( r_{\rho }\left(
x\right) +1\right) ^{1-a}}.
\end{equation*}
However, $\eta$ violates the hypothesis (\ref{hd}) as

\begin{equation*}
e^{-2c\left( n\right) \left( r_{\rho }\left( x\right) +1\right) ^{a}}\leq
\eta \left( x\right) \leq e^{-c\left( n\right) \left( \left( r_{\rho }\left(
x\right) \right) +1\right) ^{a}}.
\end{equation*}

Theorem \ref{Decay} leads to the following vanishing result for
holomorphic maps.

\begin{theorem}
\label{Vanishing}Let $\left( M^{n},g\right) $ be a complete K\"{a}hler
manifold satisfying the weighted Poincar\'{e} inequality (\ref{WP}) with
weight $\rho $ having properties (\ref{P}), (\ref{O}), (\ref{Mu1}) and 
$\rho \leq C$. Assume that the Ricci curvature has lower bound 
$\mathrm{Ric}\geq -\zeta \rho$ for some function $\zeta \left( x\right) >0$ that
converges to zero at infinity. Then any finite energy holomorphic map 
$F:M\rightarrow N,$ where $N$ is a complex Hermitian manifold of non-positive
bisectional curvature, is identically constant.
\end{theorem}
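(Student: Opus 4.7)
The plan is to apply Theorem~\ref{Decay} to $\eta = |\partial F|$, the pointwise norm of the $(1,0)$-part of $dF$ measured in the K\"ahler metric of $M$ and the Hermitian metric of $N$. Two ingredients are required: the Bochner-type inequality hypothesized in Theorem~\ref{Decay}, and the exponential decay bound (\ref{hd}).

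For the Bochner step, I would use the standard identity for a holomorphic map between a K\"ahler and a Hermitian manifold,
$$\Delta |\partial F|^2 = 2|\nabla \partial F|^2 + 2\,\mathrm{Ric}_M(\partial F, \overline{\partial F}) - 2 R^N(\partial F, \overline{\partial F}, \partial F, \overline{\partial F}).$$
Non-positivity of the bisectional curvature of $N$ makes the last term non-negative, and $\mathrm{Ric}_M \geq -\zeta \rho$ gives $\mathrm{Ric}_M(\partial F, \overline{\partial F}) \geq -\zeta \rho |\partial F|^2$. The refined Kato inequality for holomorphic maps, a consequence of $\bar\partial(\partial F) = 0$, yields $|\nabla \partial F|^2 \geq 2|\nabla |\partial F||^2$. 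Expanding $\Delta \eta^2 = 2\eta\Delta\eta + 2|\nabla\eta|^2$, these facts combine into
$$\eta\,\Delta\eta \geq -\zeta\rho\,\eta^2 + |\nabla\eta|^2,$$
which is precisely the hypothesis of Theorem~\ref{Decay}.

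To verify (\ref{hd}), I would first establish $L^2$ exponential decay of $\eta$ in the $\rho$-distance. The finite-energy hypothesis gives $\int_M \eta^2 < \infty$, while the preceding inequality implies in particular $\Delta\eta^2 \geq -2\zeta\rho\,\eta^2$. Since $\zeta \to 0$ at infinity, for any $\mu < 1$ one can adapt the Agmon-type argument used in Corollary~2.2 of \cite{LW1} (cited at (\ref{LW})): testing the weighted Poincar\'e inequality (\ref{WP}) against $\phi\, e^{\mu r_\rho} \eta$ for a cutoff $\phi$ supported where $\zeta < 1-\mu^2$, using the subharmonic inequality for $\eta^2$ and absorbing cross terms by Cauchy--Schwarz, one obtains
$$\int_{M \setminus B_\rho(p, R)} \eta^2 \leq C\, e^{-2\mu R}.$$
To promote this to the pointwise bound (\ref{hd}), note that $\rho \leq C$ together with Proposition~\ref{E} ensures each $\rho$-ball of radius one contains a background ball of uniform radius; the Sobolev inequality from Lemma~\ref{L} combined with $\mathcal{V}_\rho(x,1) \geq v_0$ then yields a Moser-type mean value inequality
$$\sup_{B_\rho(x, 1/2)} \eta^2 \leq C \int_{B_\rho(x, 1)} \eta^2,$$
with a constant independent of $x$. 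Coupling this with the $L^2$ decay produces $\eta(x) \leq \Lambda e^{-\varepsilon r_\rho(x)}$ for some $\varepsilon \in (0,1)$.

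With both hypotheses of Theorem~\ref{Decay} verified, one concludes $\eta \equiv 0$, hence $\partial F \equiv 0$, and the holomorphy of $F$ makes $F$ constant. The step I expect to be the main obstacle is the $L^2$ exponential decay: one must handle carefully the cross terms arising from $|\nabla r_\rho|^2 = \rho$ and the fact that the weighted Poincar\'e constant is exactly $1$, in order to extract any rate $\mu$ strictly below $1$ once $\zeta$ is sufficiently small near infinity. The uniformity of the subsequent Moser iteration then hinges on the $\rho \leq C$ and $\mathcal{V}_\rho(x,1) \geq v_0$ hypotheses, which are already built into Lemma~\ref{L}.
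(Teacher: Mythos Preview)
Your proposal is correct and follows essentially the same route as the paper: derive the Bochner--Kato inequality $\eta\Delta\eta \ge -\zeta\rho\eta^2 + |\nabla\eta|^2$ (the paper simply cites \cite{PRS}), obtain $L^2$ exponential decay in $r_\rho$ via the Li--Wang Agmon-type estimate from \cite{LW1}, upgrade to a pointwise bound by Moser iteration using the Sobolev inequality of Lemma~\ref{L} together with the uniform lower bound on $\mathcal{V}_\rho(x,1)$, and then invoke Theorem~\ref{Decay}. One small technical point: Lemma~\ref{L} is stated only for $\rho$-balls of radius $r\le \delta/(2C_0)$, so the mean-value step should be run on $B_\rho(x,r_0)$ rather than $B_\rho(x,1)$, with Theorem~\ref{V} used to transfer the volume lower bound---exactly as the paper does.
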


\begin{proof}
It is well known (see e.g. Theorem 1.24 in \cite{PRS}) that the differential 
$\eta =\left\vert dF\right\vert $ satisfies%
\begin{equation}
\eta \Delta \eta \geq -\zeta \rho \eta ^{2}+\left\vert \nabla \eta
\right\vert ^{2}.  \label{v1}
\end{equation}%
To be in the context of Theorem \ref{Decay}, we first show that $\eta $
decays exponentially fast in the $\rho $-distance based on the assumption
that $\int_{M}\eta ^{2}<\infty.$ Since $\zeta $ converges to zero at
infinity, by (\ref{v1}) there exists $R_{0}>0$ so that 
\begin{equation*}
\Delta \eta \geq -\frac{1}{2}\rho \eta \text{ \ on }M\backslash B_{\rho
}\left( p,R_{0}\right).
\end{equation*}%
Note that since $\rho \leq C,$ we have 
\begin{equation}
\int_{M}\rho \eta ^{2}<\infty.  \label{rho-energy}
\end{equation}%
Hence, applying Theorem 2.1 in \cite{LW1} we conclude that 
\begin{equation*}
\int_{M\backslash B_{\rho }\left( p,r\right) }\rho \eta ^{2}\leq
Ce^{-r}\int_{B_{\rho }\left( p,R_{0}\right) }\rho \eta ^{2}
\end{equation*}%
for $r\geq 2R_{0}.$

Consequently, there exists $\Lambda >0$ so that 
\begin{equation}
\int_{B_{\rho }\left( x,1\right) }\rho \eta ^{2}\leq \Lambda e^{-r_{\rho
}\left( x\right) }  \label{v2}
\end{equation}%
for all $x\in M.$ In fact, we may take $\Lambda =C\int_{M}\rho \eta ^{2}.$

We now use DeGiorgi-Nash-Moser iteration to obtain a pointwise estimate.

For $\delta $ in (\ref{O}) and $C_{0}$ in Proposition \ref{E}, fix 
\begin{equation}
r_{0}=\frac{\delta }{2C_{0}}.  \label{r0}
\end{equation}

Multiply (\ref{v1}) with $\eta ^{p-2}\phi ^{2}$, where $p\geq 2$ and $\phi
=\phi \left( r_{\rho }\left( x,\cdot \right) \right) $ is a cut-off function
with support in $B_{\rho }\left( x,r_{0}\right) $. We have

\begin{eqnarray}
\int_{M}\zeta \rho \eta ^{p}\phi ^{2} &\geq &-\int_{M}\eta ^{p-1}\phi
^{2}\Delta \eta   \label{r-1} \\
&=&\left( p-1\right) \int_{M}\left\vert \nabla \eta \right\vert ^{2}\eta
^{p-2}\phi ^{2}-2\int_{M}\left\langle \nabla \phi ,\nabla \eta \right\rangle
\eta ^{p-1}\phi   \notag \\
&\geq &\left( p-\frac{3}{2}\right) \int_{M}\left\vert \nabla \eta
\right\vert ^{2}\eta ^{p-2}\phi ^{2}-2\int_{M}\eta ^{p}\left\vert \nabla
\phi \right\vert ^{2}  \notag \\
&\geq &\frac{2p-3}{p^{2}}\int_{M}\left\vert \nabla \left( \eta ^{\frac{p}{2}%
}\phi \right) \right\vert ^{2}-3\int_{M}\eta ^{p}\left\vert \nabla \phi
\right\vert ^{2}.  \notag
\end{eqnarray}%
Using the Sobolev inequality from Lemma \ref{L} for $B_{\rho }\left(x,r_{0}\right) $ 
we get that

\begin{eqnarray}
&&\int_{B_{\rho }\left( x,r_{0}\right) }\left\vert \nabla \left( \eta ^{%
\frac{p}{2}}\phi \right) \right\vert ^{2}  \label{r0.1} \\
&\geq &\frac{1}{C}\rho \left( x\right) \mathrm{V}\left( B_{\rho }\left(
x,r_{0}\right) \right) ^{\frac{2}{n}}\left( \int_{B_{\rho }\left(
x,r_{0}\right) }\eta ^{\frac{np}{n-2}}\phi ^{\frac{2n}{n-2}}\right) ^{\frac{%
n-2}{n}}  \notag \\
&&-C\rho \left( x\right) \int_{B_{\rho }\left( x,r_{0}\right) }\eta ^{p}\phi
^{2}.  \notag
\end{eqnarray}%
Plugging (\ref{r0.1}) into (\ref{r-1}) and noting that

\begin{equation*}
\left\vert \nabla \phi \right\vert ^{2}=\left( \phi ^{\prime }\right)
^{2}\left\vert \nabla r_{\rho }\left( x,\cdot \right) \right\vert ^{2}=\rho
\left( x\right) \left( \phi ^{\prime }\right) ^{2}
\end{equation*}%
and 
\begin{equation}
\sup_{B_{\rho }\left( x,r_{0}\right) }\rho \leq C\inf_{B_{\rho }\left(
x,r_{0}\right) }\rho   \label{v3}
\end{equation}
by Proposition \ref{E}, we obtain 

\begin{equation*}
\mathrm{V}\left( B_{\rho }\left( x,r_{0}\right) \right) ^{\frac{2}{n}}\left(
\int_{B_{\rho }\left( x,r_{0}\right) }\eta ^{\frac{np}{n-2}}\phi ^{\frac{2n}{%
n-2}}\right) ^{\frac{n-2}{n}}\leq C\int_{B_{\rho }\left( x,r_{0}\right)
}\eta ^{p}\left( \phi ^{2}+\left( \phi ^{\prime }\right) ^{2}\right).
\end{equation*}

The standard Moser iteration then gives

\begin{equation*}
\eta ^{2}\left( x\right) \leq \frac{C}{\mathrm{V}\left( B_{\rho }\left(
x,r_{0}\right) \right) }\int_{B_{\rho }\left( x,r_{0}\right) }\eta ^{2}.
\end{equation*}%
Together with (\ref{v3}), this yields 
\begin{equation}
\eta ^{2}\left( x\right) \leq \frac{1}{\mathcal{V}_{\rho }\left(
x,r_{0}\right) }\int_{B_{\rho }\left( x,r_{0}\right) }\rho \eta ^{2}.
\label{v6}
\end{equation}%
According to Theorem \ref{V} and (\ref{Mu1}) we have 
\begin{equation*}
\mathcal{V}_{\rho }\left( x,r_{0}\right) \geq \frac{1}{C}v_{0}>0
\end{equation*}%
for all $x\in M.$ Then (\ref{v6}) and (\ref{v2}) imply that 
\begin{equation}
\eta \left( x\right) \leq \Lambda e^{-\frac{1}{2}r_{\rho }\left( x\right) }
\label{v6.1}
\end{equation}%
for all $x\in M,$ where $\Lambda $ is a constant depending on the total
energy of $\eta $ on $M.$

Applying Theorem \ref{Decay}, we conclude $\eta =0$ and $F$ is a constant
map.
\end{proof}

We point out that in \cite{LY1} Li and Yau proved a vanishing theorem for holomorphic 
maps $F:M\rightarrow N,$ where $M$ is assumed to be non-parabolic and its Ricci 
curvature is bounded from below by $\mathrm{Ric}\geq -\bar{\rho}$ with $\bar{\rho}$ 
being an integrable function. An alternative proof of this result using the Poisson equation is given
as Theorem 8.6 in \cite{PRS}.

As a consequence of Theorem \ref{Vanishing} we obtain the following structural result.

\begin{corollary}
\label{KE}Let $\left( M^{n},g\right) $ be a complete manifold satisfying the
weighted Poincar\'{e} inequality (\ref{WP}) with weight $\rho $ having
properties (\ref{P}), (\ref{O}), (\ref{Mu1}) and $\rho
\leq C.$ Assume that the Ricci curvature is bounded by $\mathrm{Ric}\geq -\zeta
\rho$ for some function $\zeta \left( x\right) >0$ that converges to zero
at infinity. Then $M$ has only one end.
\end{corollary}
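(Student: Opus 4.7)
The plan is to argue by contradiction: suppose $M$ has at least two ends and derive a contradiction via a finite-energy pluriharmonic function, in parallel with the proof of Theorem \ref{Vanishing} (using the implicit K\"ahler hypothesis from Theorem \ref{Ends}). The first step is to verify that each end of $M$ is nonparabolic. The bound $\rho \leq C$ forces the assumption $\mathcal{V}_\rho(x,1) \geq v_0 > 0$ to yield a uniform Riemannian volume lower bound on small balls, and by Theorem \ref{V_Intro} the $\rho$-weighted volume of any end grows at least like $c\, e^{2r}$ along the $\rho$-distance. Combined with the weighted Poincar\'e inequality restricted to cutoffs supported in a single end and the local Sobolev bound from Lemma \ref{L}, a standard capacity argument produces a positive Dirichlet Green's function on each end vanishing on its compact inner boundary, so each end is nonparabolic.

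Given two nonparabolic ends, the Li-Tam construction \cite{LT} yields a nonconstant bounded harmonic function $u: M \to \mathbb{R}$ with $\int_M |\nabla u|^2 < \infty$, and since $M$ is K\"ahler the result of P.~Li \cite{L} forces $u$ to be pluriharmonic. I would then rerun the proof of Theorem \ref{Vanishing} with $\eta := |\nabla u|$ playing the role of $|dF|$. Pluriharmonicity yields the improved Kato inequality $|\nabla^2 u|^2 \geq 2\,|\nabla |\nabla u||^2$, and combined with the Bochner formula and $\mathrm{Ric} \geq -\zeta \rho$ this gives
\begin{equation*}
\eta \Delta \eta \geq -\zeta \rho \eta^2 + |\nabla \eta|^2
\end{equation*}
wherever $\eta > 0$. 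Since $\rho \leq C$, the finite Dirichlet energy of $u$ implies $\int_M \rho \eta^2 < \infty$, and applying the decay estimate of \cite{LW1} together with DeGiorgi-Nash-Moser iteration on $\rho$-balls of radius $\delta/(2C_0)$ (via the Sobolev bound in Lemma \ref{L}) upgrades this to the pointwise exponential decay $\eta(x) \leq \Lambda e^{-\frac{1}{2} r_\rho(x)}$.

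With this Bochner-type inequality and exponential decay of $\eta$ both in hand, Theorem \ref{Decay} forces $\eta \equiv 0$, hence $u$ is constant, contradicting its nontriviality; therefore $M$ has only one end. The main obstacle is the nonparabolicity step: the weighted Poincar\'e inequality alone only guarantees that $M$ itself is nonparabolic, whereas the Li-Tam argument requires a Dirichlet Green's function on each end separately, so one has to verify that the volume hypothesis $\mathcal{V}_\rho(\cdot,1) \geq v_0$ coupled with $\rho \leq C$ is indeed strong enough to preclude any parabolic end.
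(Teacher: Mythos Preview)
Your overall strategy matches the paper's exactly: assume two ends, show both are nonparabolic, invoke Li--Tam \cite{LT} to get a nonconstant bounded finite-energy harmonic function $w$, use \cite{L} to conclude $w$ is pluriharmonic, and then apply the vanishing machinery of Theorem~\ref{Vanishing} (which you correctly unpack via Bochner plus the improved Kato inequality for $\eta = |\nabla w|$) to force $w$ constant.

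The one place you diverge is the nonparabolicity step, and the paper's argument there is much shorter than the capacity construction you sketch. Rather than appealing to Theorem~\ref{V_Intro} (which concerns $\rho$-balls, not ends) or to a Sobolev-based capacity estimate on each end, the paper simply quotes a decay result from \cite{LW1}: if an end $F$ were parabolic, then
\begin{equation*}
\int_{(M \setminus B_\rho(p,R)) \cap F} \rho(y)\, dy \leq C\, e^{-2R}
\end{equation*}
for all $R$. Picking $x$ deep inside $F$ so that $B_\rho(x,1) \subset (M \setminus B_\rho(p,R)) \cap F$, the left side is at least $\mathcal{V}_\rho(x,1) \geq v_0$, a contradiction. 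This resolves precisely the obstacle you flag at the end, and does not even require $\rho \leq C$ at this stage.
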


\begin{proof}
Let us assume by contradiction that $M$ has at least two ends. We denote
by $E$ a nonparabolic end and let $F=M\backslash E.$ Note
that $E$ exists because $M$ is nonparabolic. We claim that $F$ is
nonparabolic as well. Indeed, if $F$ were parabolic, then by \cite{LW1},

\begin{equation*}
\int_{\left( M\backslash B_{\rho }\left( p,R\right) \right) \cap F}\rho
\left( y\right) dy\leq C\,e^{-2R}
\end{equation*}%
for all $R.$ This obviously contradicts with (\ref{Mu1}).
Hence, both $E$ and $F$ are nonparabolic ends. By Li-Tam \cite{LT},
there exists a harmonic function $w$ on $M$ with the following
properties.
\begin{eqnarray}
\int_{M}\left\vert \nabla w\right\vert ^{2} &<&\infty  \label{u} \\
\limsup_{F}w &=&1  \notag \\
\liminf_{E}w &=&0.  \notag
\end{eqnarray}%
Such $w$ is necessarily pluriharmonic according to \cite{L}. Therefore,
Theorem \ref{Vanishing} is applicable to $w$ and $w$ must be constant.
This shows that $M$ must be connected at infinity. 
\end{proof}

\section{The special case of constant weight \label{Co}}

In this section we specialize to the case when $\rho =\lambda _{1}\left(\Delta \right)$ 
and present an alternative approach from \cite{MSW} to Theorem \ref{G1}. The argument
relies on the heat kernel estimates and is more streamlined. Since it avoids the level set
consideration, such an approach may be applicable to more general setting. 
In the following, $C$ denotes a constant depending only on $n,$ $K$ and $\lambda _{1}\left( \Delta \right).$ 
Denote by $H\left( x,y,t\right) $ the minimal heat kernel of $M.$ 

Let us restate Theorem \ref{G1} below.

\begin{theorem}
\label{Heat}Let $\left( M^{n},\,g\right) $ be a Riemannian manifold with
positive spectrum $\lambda _{1}\left( \Delta \right) >0$ and with Ricci
curvature $\mathrm{Ric}\geq -K$ for some constant $K\geq 0.$ Then there exists 
$C>0$ such that for any $p, x\in M$ and any $r>0,$ 

\begin{equation*}
\int_{B\left( p,r\right) }G\left( x,y\right) dy\leq C\left( r+1\right).
\end{equation*}
\end{theorem}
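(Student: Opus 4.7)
The plan is to represent the Green's function through the heat kernel, $G(x,y)=\int_0^\infty H(x,y,t)\,dt$, and use Fubini to write
\begin{equation*}
\int_{B(p,r)} G(x,y)\,dy \;=\; \int_0^\infty u(x,t)\,dt,\qquad u(x,t):=\int_{B(p,r)} H(x,y,t)\,dy.
\end{equation*}
Since $u(\cdot,t)=e^{-t\Delta}\chi_{B(p,r)}$ is the heat semigroup applied to $\chi_{B(p,r)}$, it solves the heat equation with that initial datum and the maximum principle gives $0\le u\le 1$.

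The key additional ingredient is the $L^2$ decay coming from the spectral gap: since $\|e^{-t\Delta}\|_{L^2\to L^2}\le e^{-\lambda_1(\Delta)\,t}$, one gets $\|u(\cdot,t)\|_2\le e^{-\lambda_1(\Delta)\,t}\,\mathrm{V}(B(p,r))^{1/2}$. To convert this into a pointwise bound useful for the tail $t$-integral, I would iterate one step of the semigroup, $u(x,t+1)=\int_M H(x,z,1)\,u(z,t)\,dz$, and apply Cauchy--Schwarz together with $\int_M H(x,z,1)^2\,dz=H(x,x,2)$, producing
\begin{equation*}
u(x,t+1)\;\le\;H(x,x,2)^{1/2}\,e^{-\lambda_1(\Delta)\,t}\,\mathrm{V}(B(p,r))^{1/2}.
\end{equation*}
The Bishop volume comparison under $\mathrm{Ric}\ge -(n-1)K$ gives $\mathrm{V}(B(p,r))\le C e^{Cr}$. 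Taking the minimum of this decaying bound with the trivial $u\le 1$, the two estimates cross at a time $T^{*}$ of size $O(r+1)$; splitting the integral at $T^{*}$ gives $\int_0^{T^{*}} u\,dt\le T^{*}$ from the trivial bound and a bounded tail from the exponential decay, for a total of $C(r+1)$.

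I expect the main obstacle to be obtaining a uniform-in-$x$ pointwise bound on $u(x,t)$: the Cauchy--Schwarz step above introduces the factor $H(x,x,2)^{1/2}$, which by the Li--Yau upper bound is only controlled by $\mathrm{V}(B(x,1))^{-1/2}$, a quantity that need not be bounded uniformly in $x$ under the hypotheses. To obtain a constant depending only on $n,K,\lambda_1(\Delta)$, one has to absorb this local volume factor --- most naturally by replacing the one-step semigroup argument with a parabolic mean-value inequality averaged over a ball of radius $\sqrt{t}$, where Bishop comparison at that larger scale lets the local volume cancel against the $L^2$ bound, and by separately handling the case when $x$ sits far from $B(p,r)$ through the Gaussian tail $e^{-d^2(x,y)/(Ct)}$ in the upper heat kernel estimate. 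Arranging these two regimes cleanly so that the final constant is universal is the delicate part of the argument.
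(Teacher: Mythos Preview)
Your overall strategy---write $G$ as the time integral of the heat kernel, use $u(x,t)=e^{-t\Delta}\chi_{B(p,r)}$, bound it by $1$ for small $t$ and by an exponentially decaying quantity for large $t$, and split at a crossover time $T^*\sim r$---is exactly the approach the paper takes. The gap you flag is real, but the remedy is much simpler than the parabolic mean-value or Gaussian-tail gymnastics you outline.

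The two missing ingredients are these. First, you should reduce to the case $x\in B(p,r)$ at the outset: the function $\Phi(x)=\int_{B(p,r)}G(x,y)\,dy$ is harmonic on $M\setminus B(p,r)$, and approximating $G$ by Dirichlet Green's functions on a compact exhaustion shows that $\Phi$ attains its supremum over $M\setminus B(p,r)$ on $\partial B(p,r)$. So it is enough to bound $\Phi$ for $x\in B(p,r)$. Second, once $x\in B(p,r)$, do not bound $\mathrm{V}(B(p,r))$ absolutely by $Ce^{Cr}$; instead use the Bishop--Gromov \emph{ratio} comparison. Since $B(p,r)\subset B(x,2r)$, one has
\[
\frac{\mathrm{V}(B(p,r))}{\mathrm{V}(B(x,1))}\;\le\;\frac{\mathrm{V}(B(x,2r))}{\mathrm{V}(B(x,1))}\;\le\;e^{Cr},
\]
hence $H(x,x,2)^{1/2}\le C\,\mathrm{V}(B(x,1))^{-1/2}\le C\,e^{Cr/2}\,\mathrm{V}(B(p,r))^{-1/2}$. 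Plugging this into your Cauchy--Schwarz bound $u(x,t+1)\le H(x,x,2)^{1/2}e^{-\lambda_1 t}\,\mathrm{V}(B(p,r))^{1/2}$, the volume factors cancel and you get $u(x,t+1)\le C\,e^{Cr/2-\lambda_1 t}$ uniformly in $x\in B(p,r)$, with constants depending only on $n,K,\lambda_1$. From here your splitting argument at $T^*=\max\{1,\,C'r/\lambda_1\}$ finishes the proof exactly as you describe. The paper carries out essentially the same computation, phrased via the pointwise bound $H(x,y,t)\le H(x,x,t)^{1/2}H(y,y,t)^{1/2}$ and the on-diagonal decay $H(x,x,t)\le e^{-\lambda_1(t-1)}H(x,x,1)$.
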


\begin{proof}
As noted in the proof of Theorem \ref{G3}, it suffices to prove the result
for $x\in B\left( p,r\right).$ 

It is well known (see e.g. Chapter 10 in \cite{Gr1}) that 

\begin{equation*}
e^{\lambda _{1}\left( \Delta \right) t}H\left( x,x,t\right) \text{ \ is
nonincreasing in }t>0\text{.}
\end{equation*}
Therefore,

\begin{equation}
H\left( x,x,t\right) \leq e^{-\lambda _{1}\left( \Delta \right) \left(
t-1\right) }H\left( x,x,1\right)  \label{l4}
\end{equation}%
for all $t\geq 1.$ Using the
semi-group property and the Cauchy-Schwarz inequality, we get

\begin{eqnarray*}
H\left( x,y,2t\right) &=&\int_{M}H\left( x,z,t\right) H\left( y,z,t\right) dz\\
&\leq &\left( \int_{M}H\left( x,z,t\right) ^{2}dz\right) ^{\frac{1}{2}%
}\left( \int_{M}H\left( y,z,t\right) ^{2}dz\right) ^{\frac{1}{2}} \\
&=&H\left( x,x,2t\right) ^{\frac{1}{2}}H\left( y,y,2t\right) ^{\frac{1}{2}}.
\end{eqnarray*}%
Together with (\ref{l4}), this proves that 
\begin{equation}
H\left( x,y,t\right) \leq e^{-\lambda _{1}\left( \Delta \right) \left(
t-1\right) }H\left( x,x,1\right) ^{\frac{1}{2}}H\left( y,y,1\right) ^{\frac{1%
}{2}}  \label{l5}
\end{equation}%
for all $x, y\in M$ and all $t\geq 1.$ 

By Li-Yau \cite{LY} we have for all $x\in M$ 
\begin{equation}
H\left( x,x,1\right) \leq \frac{C}{\mathrm{V}\left( x,1\right) },  \label{l6}
\end{equation}%
where $\mathrm{V}\left( x,1\right) =\mathrm{Vol}\left( B\left( x,1\right)\right).$ 
However, by the Bishop-Gromov volume comparison theorem,
for any $x\in B\left( p,r\right),$

\begin{eqnarray*}
\frac{\mathrm{V}\left( p,r\right) }{\mathrm{V}\left( x,1\right) } &\leq &%
\frac{\mathrm{V}\left( x,2r\right) }{\mathrm{V}\left( x,1\right) } \\
&\leq &e^{Cr}.
\end{eqnarray*}%
Hence, if both $x, y\in B\left( p,r\right),$ then we get from (\ref{l6}) that 
\begin{equation*}
H\left( x,x,1\right) ^{\frac{1}{2}}H\left( y,y,1\right) ^{\frac{1}{2}}\leq
e^{Cr}\,\mathrm{V}\left( p,r\right) ^{-1}.
\end{equation*}%
Plugging this into (\ref{l5}) we conclude that 
\begin{equation*}
H\left( x,y,t\right) \leq C\,e^{-\lambda _{1}\left( \Delta \right) t+Cr}%
\mathrm{V}\left( p,r\right) ^{-1}
\end{equation*}%
for all $x,y\in B\left( p,r\right)$ and $t\geq 1.$
This immediately implies that for some $C_{1}>0,$ 

\begin{equation}
\int_{B\left( p,r\right) }H\left( x,y,t\right) dy\leq C_{1}e^{-\lambda
_{1}\left( \Delta \right) t+C_{1}r}  \label{l7}
\end{equation}%
for any $x\in B\left( p,r\right) $ and $t\geq 1.$ In particular, for $t\geq \Lambda$
with 
\begin{equation}
\Lambda =\max \left\{ 1,\frac{2C_{1}r}{\lambda _{1}\left( \Delta \right) }%
\right\},  \label{l8}
\end{equation}
one has

\begin{equation*}
\int_{B\left( p,r\right) }H\left( x,y,t\right) dy\leq Ce^{-\frac{1}{2}%
\lambda _{1}\left( \Delta \right) t}
\end{equation*}%
for all $x\in B\left( p,r\right).$  We integrate this
inequality from $t=\Lambda$ to $t=\infty $ and use Fubini's theorem to
conclude that 
\begin{equation}
\int_{B\left( p,r\right) }\left( \int_{\Lambda }^{\infty }H\left(
x,y,t\right) dt\right) dy\leq C  \label{l9}
\end{equation}%
for any $x\in B\left( p,r\right).$ On the other hand, it is well know that
the minimal heat kernel satisfies 

\begin{equation*}
\int_{M}H\left( x,y,t\right) dy\leq 1
\end{equation*}%
for all $x\in M.$ It implies that

\begin{eqnarray*}
\int_{B\left( p,r\right) }\left( \int_{0}^{\Lambda }H\left( x,y,t\right)
dt\right) dy &=&\int_{0}^{\Lambda }\left( \int_{B\left( p,r\right) }H\left(
x,y,t\right) dy\right) dt \\
&\leq &\Lambda.
\end{eqnarray*}%
In view of the choice of $\Lambda$ from (\ref{l8}) we conclude that 
\begin{equation}
\int_{B\left( p,r\right) }\left( \int_{0}^{\Lambda }H\left( x,y,t\right)
dt\right) dy\leq C\left( r+1\right)   \label{l10}
\end{equation}%
for all $x\in B\left( p,r\right).$

Combining (\ref{l9}) and (\ref{l10}), we obtain that

\begin{equation*}
\int_{B\left( p,r\right) }\left( \int_{0}^{\infty }H\left( x,y,t\right)
dt\right) dy\leq C\left( r+1\right) 
\end{equation*}%
for all $x\in B\left( p,r\right).$ Since
\begin{equation*}
G\left( x,y\right) =\int_{0}^{\infty }H\left( x,y,t\right) dt,
\end{equation*}%
this shows
\begin{equation*}
\int_{B\left( p,r\right) }G\left( x,y\right) dy\leq C\left( r+1\right) 
\end{equation*}%
for all $x\in B\left( p,r\right).$ The theorem is proved.
\end{proof}

\end{document}